    \newtheorem{Lem}{Lemma}[section]
    \newtheorem{Lem-Def}{Lemma-Definition}[section]
    \newtheorem{Prop}[Lem]{Proposition}
    \newtheorem{Alg}[Lem]{Algorithm}
    \newtheorem{Conjecture}[Lem]{Conjecture}
    \newtheorem{Thm}[Lem]{Theorem}
    \newtheorem{Cor}[Lem]{Corollary}
		\newtheorem*{Thm*}{Theorem}
\theoremstyle{definition}
    \newtheorem{Def}[Lem]{Definition}
    \newtheorem{Exa}[Lem]{Example}
    \newtheorem{Rem}[Lem]{Remark}
\let\csname ver@amsthm.sty\endcsname\relax
\let\theoremstyle\relax
\theoremstyle{plain}
\theoremstyle{definition}
\numberwithin{figure}{section}
\numberwithin{equation}{section}
\def\@myMR[#1 #2]{\relax\ifhmode\unskip\spacefactor3000 \space\fi
  \MRhref{#1}{MR\,#1}}
\renewcommand\MR[1]{\@myMR[#1 ]}
\renewcommand{\MRhref}[2]{{\tiny%
  \href{http://www.ams.org/mathscinet-getitem?mr=#1}{#2}}%
}
\renewcommand*{\backref}[1]{}
\renewcommand*{\backrefalt}[4]{%
    \tiny%
    ({
    \ifcase #1 not cited%
          \or cit.\ on p.~#2%
          \else cit.\ on pp.~#2%
    \fi%
    })\\[-.6em]}
\def\maketitle{\par
  \@topnum\z@ 
  \@setcopyright
  \thispagestyle{empty}
  \ifx\@empty\shortauthors \let\shortauthors\shorttitle
  \else \andify\shortauthors
  \fi
  \@maketitle@hook
  \begingroup
  \@maketitle
  \toks@\@xp{\shortauthors}\@temptokena\@xp{\shorttitle}%
  \toks4{\def\\{ \ignorespaces}}
  \edef\@tempa{%
    \@nx\markboth{\the\toks4
      \@nx\MakeUppercase{\the\toks@}}{\the\@temptokena}}%
  \@tempa
  \endgroup
  \c@footnote\z@
    \renewcommand{\footnoterule}{%
      \kern -3pt
      \hrule width \textwidth height .5pt
      \kern 2pt
    }
  {
    \renewcommand\thefootnote{}
    \vspace{-2em}
    \footnote{
      \par\vspace{-1.2em}\noindent
      \def\@footnotetext##1{\noindent{\footnotesize##1}\par}%
      \let\@makefnmark\relax  \let\@thefnmark\relax
      \ifx\@empty\@date\else \@footnotetext{\@setdate}\fi
      \ifx\@empty\@subjclass\else \@footnotetext{\@setsubjclass}\fi
      \ifx\@empty\@keywords\else \@footnotetext{\@setkeywords}\fi
      \ifx\@empty\thankses\else \@footnotetext{%
        \def\par{\let\par\@par}\@setthanks}%
      \fi
    }
    \addtocounter{footnote}{-1}
  }
  \@cleartopmattertags
}
\def\@adminfootnotes{\@empty}
\def\@settitle{\begin{center}%
  \baselineskip14\p@\relax
    \bfseries
\Large
  \@title
  \end{center}%
}
\def\@setauthors{%
  \begingroup
  \def\thanks{\protect\thanks@warning}%
  \trivlist
  \centering\footnotesize \@topsep30\p@\relax
  \advance\@topsep by -\baselineskip
  \item\relax
  \author@andify\authors
  \def\\{\protect\linebreak}%
  \large{\authors}%
  \ifx\@empty\contribs
  \else
    ,\penalty-3 \space \@setcontribs
    \@closetoccontribs
  \fi
  \endtrivlist
  \endgroup
}
\def\@setaddresses{\par
  \nobreak \begingroup
\footnotesize
  \def\author##1{\end{minipage}\hskip 1sp \begin{minipage}{.5\textwidth}\raggedright%
    ~\\[2em]{\bf##1}\\[.5em]%
  }%
  \interlinepenalty\@M
  \def\address##1##2{\begingroup
    {\ignorespaces##2}\endgroup\\[.5em]}%
  \def\curraddr##1##2{\begingroup
    \@ifnotempty{##2}{\nobreak\indent\curraddrname
      \@ifnotempty{##1}{, \ignorespaces##1\unskip}\/:\space
      ##2\par}\endgroup}%
  \def\email##1##2{\begingroup
    \@ifnotempty{##2}{\nobreak\indent
      \@ifnotempty{##1}{, \ignorespaces##1\unskip}
      \ttfamily##2\par}\endgroup}%
  \def\urladdr##1##2{\begingroup
    \def~{\char`\~}%
    \@ifnotempty{##2}{\nobreak\indent\urladdrname
      \@ifnotempty{##1}{, \ignorespaces##1\unskip}\/:\space
      \ttfamily##2\par}\endgroup}%
  \setlength{\parindent}{0pt}%
  \vfill%
  {
  \begin{minipage}{0mm}
  \addresses
  \end{minipage}
  }
  \endgroup
}
\renewcommand{\author}[2][]{%
  \ifx\@empty\authors
    \gdef\authors{#2}%
    \g@addto@macro\addresses{\author{#2}}%
  \else
    \g@addto@macro\authors{\and#2}%
    \g@addto@macro\addresses{\author{#2}}%
  \fi
  \@ifnotempty{#1}{%
    \ifx\@empty\shortauthors
      \gdef\shortauthors{#1}%
    \else
      \g@addto@macro\shortauthors{\and#1}%
    \fi
  }%
}
\edef\author{\@nx\@dblarg
  \@xp\@nx\csname\string\author\endcsname}
\def\@secnumfont{\@empty}
\def\section{\@startsection{section}{1}%
  \z@{.7\linespacing\@plus\linespacing}{.5\linespacing}%
  {\large\bfseries\centering}}
\newcommand{\A}{\mathcal A}
\newcommand{\col}{\colon}
\newcommand{\ol}{\overline}
\newcommand{\D}{\mathcal{D}}
\DeclareMathOperator{\wt}{wt}
\DeclareMathOperator{\csf}{csf}
\DeclareMathOperator{\asc}{asc}
\newcommand{\sym}{\Lambda}
\title[Chromatic symmetric functions from the modular law]{Chromatic symmetric functions from the modular law}
\author{Alex Abreu}
\address{
    Instituto de Matemática e Estatística\\
    Universidade Federal Fluminense\\
    Rua Prof. M. W. de Freitas, S/N\\
    24210-201 Niterói, Rio de Janeiro, Brasil
}
\email{alexbra1@gmail.com}
\author{Antonio Nigro}
\address{
    Instituto de Matemática e Estatística\\
    Universidade Federal Fluminense\\
    Rua Prof. M. W. de Freitas, S/N\\
    24210-201 Niterói, Rio de Janeiro, Brasil
}
\email{antonio.nigro@gmail.com}
\newcommand{\dyckpath}[2]{
\draw[line width=2pt] (#1) foreach \dir in {#2}{ -- ++(\dir*90:1)};
}
\newcommand{\dyckpathc}[3]{
\draw[line width=2pt, color=#3] (#1) foreach \dir in {#2}{ -- ++(\dir*90:1)};
}
\newcommand{\gline}[4]{
\draw[line width=#4 pt, color=#3] (#1) -- (#2);
}
\newcommand{\glinear}[4]{
\draw[line width=#3 pt, color=#2,postaction=decorate] (#1) -- +(0,-1);
\node at ($(#1)+(-0,-0.5)$) { #4};
}
\newcommand{\glinearr}[4]{
\draw[line width=#3 pt, color=#2,postaction=decorate] (#1) -- +(-1,-1);
\node at ($(#1)+(-0.5,-0.5)$) { #4};
}
\begin{document}
\maketitle

\begin{abstract}
  In this article we show how to compute the chromatic quasisymmetric function of indifference graphs from the modular law introduced in \cite{GPmodular}. We provide an algorithm which works for any function that satisfies this law, such as unicellular LLT polynomials. When the indifference graph has bipartite complement it reduces to a planar network, in this case, we prove that the coefficients of the chromatic quasisymmetric function in the elementary basis are positive unimodal polynomials and characterize them as certain $q$-hit numbers (up to a factor). Finally, we discuss the logarithmic concavity of the coefficients of the chromatic quasisymmetric function.
\end{abstract}

\tableofcontents

\section{Introduction}
    The chromatic polynomial can be characterized as the unique function
    \[
    \chi\col \mathbf{Graphs}\to \mathbb{Q}[x]
    \]
    that has the following three properties.\footnote{Actually, only properties \eqref{item:A} and \eqref{item:C} are needed.}
    \begin{enumerate}[(A)]
        \item\label{item:A} It satisfies the deletion-contraction recurrence, $\chi_{G}=\chi_{G\setminus e}-\chi_{G/e}$ for every edge $e\in E(G)$.
        \item\label{item:B} It is multiplicative, $\chi_{G_1\sqcup G_2}=\chi_{G_1}\chi_{G_2}$.
        \item\label{item:C} It has values at complete graphs given by $\chi_{K_n}(x)=x(x-1)\cdots(x-n+1)$.
    \end{enumerate}

     The chromatic polynomial of a graph admits a symmetric function generalization introduced by Stanley in \cite{Stan95}. Given a graph $G$ it is defined as
     \[
     \csf(G):=\sum_{\kappa}x_{\kappa}
     \]
     where the sum runs through all proper colorings of the vertices  $\kappa\col V(G)\to \mathbb{N}$ and $x_{\kappa}:=\prod_{v\in V(G)} x_{\kappa(v)}$. A coloring $\kappa$ is proper if $\kappa(v)\neq\kappa(v')$ whenever $v$ and $v'$ are adjacent. If $\sym$ is the algebra of symmetric functions, it turns out that $\csf$ is a function from $\mathbf{Graphs}$ to $\Lambda$. This function is multiplicative and its values at complete graphs are given by $\csf(K_n)=n!e_n$ (where $e_n$ is the elementary symmetric function of degree $n$).  However, it does not satisfy the deletion-contraction recurrence, one simple reason being that the chromatic symmetric function is homogeneous of degree equal to the number of vertices of $G$.\par

       In this paper we will restrict ourselves to indifference graphs, i.e., graphs whose set of vertices can be identified with $[n]:=\{1,2,\ldots, n\}$ and such that  if $\{i,j\}$ is an edge with $i<j$, then $\{i,k\}$ and $\{k,j\}$ are also edges for every $k$ such that $i<k<j$. This class of graphs, restrictive as it might look, ends up having deep relations with geometry and representation theory, see, for example, \cite{BrosnanChow}, \cite{GP} and \cite{abe2019}. \par

       Indifference graphs can be naturally associated with Hessenberg functions and Dyck paths (see Figure \ref{fig:fig1}). A Hessenberg function is a non-decreasing function $h\col [n]\to[n]$ such that $h(i)\geq i$ for every $i\in [n]$. The graph associated to $h$ is the graph with vertex set $[n]$ and set of edges $E=\{\{i,j\};i<j\leq h(i)\}$. All indifference graphs arise from Hessenberg functions. To each Hessenberg function there is an associated Dyck path, which is the unique path with $h(i)$ north steps before the $i$-th east step.  We usually denote a Hessenberg function by the $n$-tuple of its values $h=(h(1),h(2),\ldots, h(n))$ or by the word in $n$ (north step) and $e$ (east step) corresponding to its associated Dyck path.  We denote by $\D$ the set of Dyck paths, which will be identified with the set of Hessenberg functions and with the set of indifference graphs. In the rest of the introduction by \emph{graph} we will always mean an indifference graph.
       \begin{figure}[htb]  \centering
       \begin{tikzpicture}
       \begin{scope}[scale=0.6]
       \draw[help lines] (0,0) grid +(3,3);
       \draw[fill=blue, fill opacity=0.2, draw opacity=0] (0,1) rectangle +(1,1);
       \draw[fill=red, fill opacity=0.2, draw opacity=0] (1,2) rectangle +(1,1);
       \dyckpath{0,0}{1,1,0,1,0,0}
       \node at (1.5,-1) {$h=(2,3,3)$};
       \node at (1.75,-1.7) {$=nnenee$};
       \end{scope}
       \begin{scope}[scale=0.6, shift={(4,0)}]
        \node (1) at (1,1.5) [label=below:$1$, shape=circle, fill=black, inner sep=2pt] {};
        \node (2) at (3,1.5) [label=below:$2$, shape=circle, fill=black,inner sep=2pt] {};
        \node (3) at (5,1.5) [label=below:$3$, shape=circle, fill=black,inner sep=2pt] {};
        \node at (3,-1) {$G$};
        \draw[color=blue] (1) to (2);
        \draw[color=red] (2) to (3);
        \end{scope}
        \end{tikzpicture}
        \caption{A Hessenberg function $h$, its corresponding Dyck path and its associated indifference Graph.}
       \label{fig:fig1}
   \end{figure}
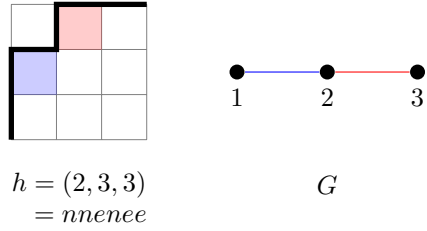

     When $G$ is the graph associated to $h$, we can recover the chromatic polynomial of $G$ by a differential operator $\Delta_h$ in the Weyl algebra $\mathbb{Q}[x,\partial]$. The operator $\Delta_h$ is obtained from $h$ by replacing each east step with $\partial$ and each north step with $x$. For example, if $h=nnenee$, then $\Delta_h=x^2\partial x\partial^2$. It is not hard to check that the chromatic polynomial of $G$ satisfies the following equality
   \[
   \Delta_{h}x^n=\chi_G(n)x^n.
   \]

   Moreover, with this interpretation, the deletion-contraction recurrence (when applied to edges of $G$ that correspond to corners in $h$) is, essentially, the well known formula $[\partial,x]=1$, which gives
   \begin{equation}
       \label{eq:partialx}
       x\partial=\partial x- 1.
   \end{equation}
   See Figure \ref{fig:delcontpartial}.
    \begin{figure}[htb]  \centering
    \begin{tikzpicture}
    \begin{scope}[scale=0.6]
       \draw[help lines] (0,0) grid +(3,3);
       \dyckpath{0,0}{1,1,0,1,0,0}
       \dyckpathc{0,1}{1,0}{yellow}
       \node at (1.5,-1) {$h=nnenee$};
       \node at (-0.3,1.5) {$x$};
       \node at (0.5,2.3) {$\partial$};
       \end{scope}
       \begin{scope}[scale=0.6, shift={(4,0)}]
        \node (1) at (1,1.5) [label=below:$1$, shape=circle, fill=black, inner sep=2pt] {};
        \node (2) at (3,1.5) [label=below:$2$, shape=circle, fill=black,inner sep=2pt] {};
        \node (3) at (5,1.5) [label=below:$3$, shape=circle, fill=black,inner sep=2pt] {};
        \node at (3,-1) {$G$};
        \draw (1) to (2);
        \draw (2) to (3);
        \end{scope}
        \begin{scope}[scale=0.6,shift={(-6,-5)}]
       \begin{scope}
       \draw[help lines] (0,0) grid +(3,3);
       \dyckpath{0,0}{1,0,1,1,0,0}
       \dyckpathc{0,1}{0,1}{yellow}
       \node at (1.5,-1) {$h'=nennee$};
       \node at (0.5,0.7) {${\partial}$};
       \node at (1.3,1.5) {$x$};
       \end{scope}
       \begin{scope}[ shift={(4,0)}]
        \node (1) at (1,1.5) [label=below:$1$, shape=circle, fill=black, inner sep=2pt] {};
        \node (2) at (3,1.5) [label=below:$2$, shape=circle, fill=black,inner sep=2pt] {};
        \node (3) at (5,1.5) [label=below:$3$, shape=circle, fill=black,inner sep=2pt] {};
        \node at (3,-1) {$G\setminus e$};
        \draw (2) to (3);
        \end{scope}
        \begin{scope}[shift={(12,0)}]
        \begin{scope}
       \draw[help lines] (0,0) grid +(2,2);
       \dyckpath{0,0}{1,1,0,0}
       \node at (1.5,-1) {$h''=nnee$};
       \end{scope}
       \begin{scope}[shift={(4,0)}]
        \node (1) at (1,1.5) [label=below:${1,2}$, shape=circle, fill=black, inner sep=2pt] {};
        \node (2) at (3,1.5) [label=below:$3$, shape=circle, fill=black,inner sep=2pt] {};
        \node at (2,-1) {$G/e$};
        \draw (1) to (2);
        \end{scope}
        \end{scope}
        \end{scope}
    \end{tikzpicture}
   \caption{The deletion-contraction recurrence}
   \label{fig:delcontpartial}
   \end{figure}
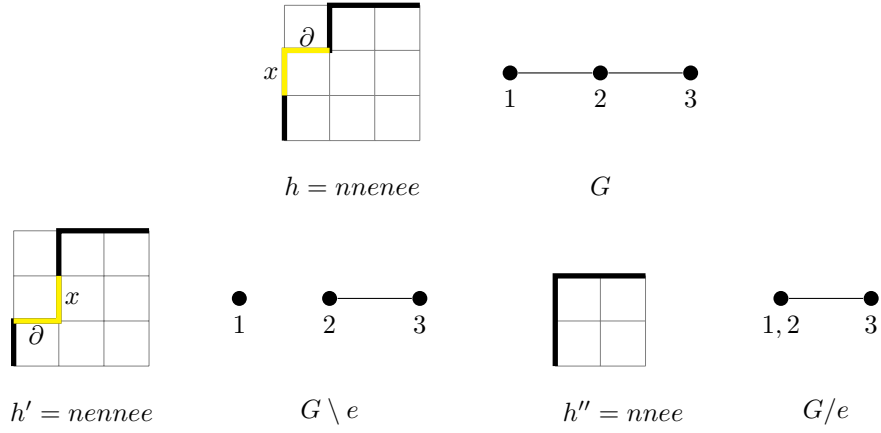

  As pointed out before, the fact that Formula \eqref{eq:partialx} is not homogeneous is one of the reasons for the deletion-contraction recurrence not holding for the chromatic symmetric function. On the other hand, it is not hard to find homogeneous relations for $\partial$ and $x$. For example, one can simply consider $[[\partial,x],x]=0$ and $[[\partial,x],\partial]=0$. Explicitly this gives
   \begin{equation}
   \label{eq:basicpartial}
       \begin{aligned}
       2x\partial x&=x^2\partial+\partial x^2\\
       2\partial x\partial&=\partial^2x+x\partial^2.
       \end{aligned}
    \end{equation}
    In particular, if $G_0$, $G_1$ and $G_2$ are graphs associated to Dyck paths $h_0$, $h_1$ and $h_2$ such that $h_0$ and $h_2$ are obtained from $h_1$ by replacing a subpath $nen$ with $enn$ and $nne$,  respectively, then (see Figure \ref{fig:xpartialx} below)\par
    \begin{equation}
    \label{eq:basicpol}
        2\chi_{G_1}=\chi_{G_0}+\chi_{G_2}.
    \end{equation}
    Similarly, the same holds if $h_0$ and $h_2$ are obtained from $h_1$ by replacing a subpath $ene$ with $een$ and $nee$, respectively.


     \begin{figure}[htb]  \centering
    \begin{tikzpicture}
    \begin{scope}[scale=0.6]
       \draw[help lines] (0,0) grid +(3,3);
       \dyckpath{0,0}{1,1,0,1,0,0}
       \dyckpathc{0,1}{1,0,1}{yellow}
       \node at (1.5,-1) {$h=nnenee$};
       \node at (-0.3,1.5) {$x$};
       \node at (0.5,2.3) {$\partial$};
       \node at (1.3,2.5) {$x$};
       \end{scope}
       \begin{scope}[scale=0.6, shift={(4,0)}]
        \node (1) at (1,1.5) [label=below:$1$, shape=circle, fill=black, inner sep=2pt] {};
        \node (2) at (3,1.5) [label=below:$2$, shape=circle, fill=black,inner sep=2pt] {};
        \node (3) at (5,1.5) [label=below:$3$, shape=circle, fill=black,inner sep=2pt] {};
        \node at (3,0) {$G_1$};
        \draw (1) to (2);
        \draw (2) to (3);
        \end{scope}
        \begin{scope}[scale=0.6,shift={(-6,-5)}]
       \begin{scope}
       \draw[help lines] (0,0) grid +(3,3);
       \dyckpath{0,0}{1,0,1,1,0,0}
       \dyckpathc{0,1}{0,1,1}{yellow}
       \node at (1.5,-1) {$h_0=nennee$};
       \node at (0.5,0.7) {${\partial}$};
       \node at (1.3,1.5) {$x$};
       \node at (1.3,2.5) {$x$};
       \end{scope}
       \begin{scope}[ shift={(4,0)}]
        \node (1) at (1,1.5) [label=below:$1$, shape=circle, fill=black, inner sep=2pt] {};
        \node (2) at (3,1.5) [label=below:$2$, shape=circle, fill=black,inner sep=2pt] {};
        \node (3) at (5,1.5) [label=below:$3$, shape=circle, fill=black,inner sep=2pt] {};
        \node at (3,0) {$G_0$};
        \draw (2) to (3);
        \end{scope}
        \begin{scope}[shift={(12,0)}]
        \begin{scope}
       \draw[help lines] (0,0) grid +(3,3);
       \dyckpath{0,0}{1,1,1,0,0,0}
       \dyckpathc{0,1}{1,1,0}{yellow}
       \node at (1.5,-1) {$h_2=nnneee$};
       \node at (0.5,3.3) {${\partial}$};
       \node at (-0.3,1.5) {$x$};
       \node at (-0.3,2.5) {$x$};
       \end{scope}
       \begin{scope}[shift={(4,0)}]
        \node (1) at (1,1.5) [label=below:$1$, shape=circle, fill=black, inner sep=2pt] {};
        \node (2) at (3,1.5) [label=below:$2$, shape=circle, fill=black,inner sep=2pt] {};
        \node (3) at (5,1.5) [label=below:$3$, shape=circle, fill=black,inner sep=2pt] {};
        \node at (3,0) {$G_2$};
        \draw (1) to (2);
        \draw (2) to (3);
        \draw (1) [bend left=45] to (3);
        \end{scope}
        \end{scope}
        \end{scope}
    \end{tikzpicture}
   \caption{Homogeneous relation involving $x$ and $\partial$.}
   \label{fig:xpartialx}
   \end{figure}
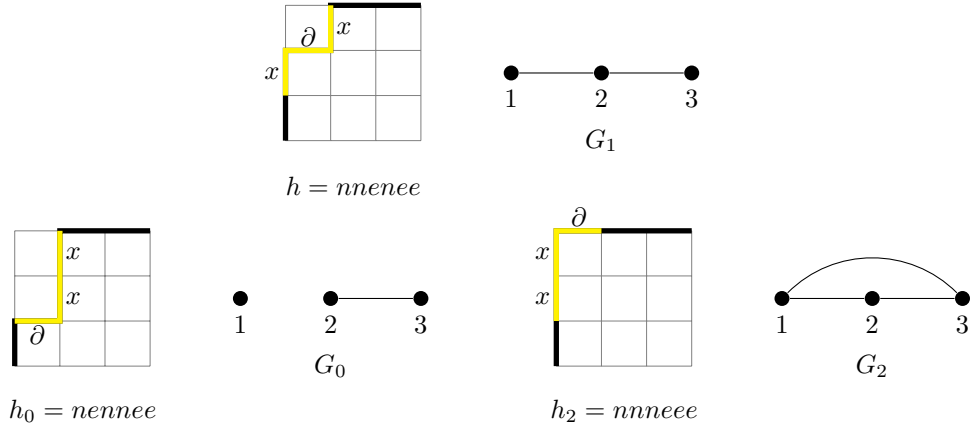
    One can actually replace Property \eqref{item:A} (the deletion-contraction recurrence) with the recurrence in Equation \eqref{eq:basicpol}, and these properties will still characterize the chromatic polynomial for indifference graphs. In other words, the restriction of $\chi$ to the set of indifference graphs is the unique function that has the following three properties.
    \begin{enumerate}[(A)]
        \item[(A')]\label{item:A'} Whenever $G_0$, $G_1$, and $G_2$ are graphs associated to Dyck paths $h_0$, $h_1$, and $h_2$ such that $h_0$ and $h_2$ are obtained from $h_1$ by replacing a subpath $nen$ with $enn$ and $nne$,  respectively, or by replacing a subpath $ene$ with $een$ and $nee$, respectively, then $2\chi_{G_1}=\chi_{G_0}+\chi_{G_2}$.
        \item[(B)]\label{item:B'} It is multiplicative, $\chi_{G_1\sqcup G_2}=\chi_{G_1}\chi_{G_2}$.
        \item[(C)]\label{item:C'} It has values at complete graphs given by $\chi_{K_n}(x)=x(x-1)\cdots(x-n+1)$.
    \end{enumerate}
    One possible way to see this is to find more general relations between $\partial$ and $x$ starting from Equation \eqref{eq:basicpartial}. One example is the following
   \begin{equation}
       \label{eq:relpartial}
       (b+1)x\partial^lx^b=l\partial^{l-1}x^{b+1}\partial+(b+1-l)\partial^lx^{b+1}.
   \end{equation}
   This equation, translated to graphs, means that when $G_0$, $G_1$ and $G_2$ are the graphs associated to Dyck paths $h_0$, $h_1$, and $h_2$ (see Figure \ref{fig:h0h2}) such that $h_0$ and $h_2$ are obtained from $h_1$ by replacing a subpath  $ne^ln^b$ with $e^ln^{b+1}$ and $e^{l-1}n^{b+1}e$, respectively, then we have
   \begin{equation}
       \label{eq:relpol}
       (b+1)\cdot \chi_{G_1}=l\cdot\chi_{G_2}+(b+1-l)\cdot\chi_{G_0}.
   \end{equation}

   Given a connected graph $G_1$, which is not complete,  we have that its associated Dyck path $h_1$ must end with $ne^ln^be^c$ for some positive integers $l,b,c$. If we define $h_0$ and $h_2$ by replacing the subpath $ne^ln^b$ of $h_1$ with $e^ln^{b+1}$ and $e^{l-1}n^{b+1}e$, respectively, we have that both $h_0$ and $h_2$ are Dyck paths (because $G_1$ is connected) and we can apply Equation \eqref{eq:relpol}. This process eventually ends, since $b$ will increase at each step.  Of course, the chromatic polynomial of indifference graphs can be easily computed directly, but the above recurrence is useful, for instance, if one wants to write the chromatic polynomial in certain bases as in \cite{Brenti}.\par

    The ideia is to repeat this process for the chromatic symmetric function.  Actually, we will work with the chromatic quasisymmetric function introduced by Shareshian and Wachs in \cite{ShareshianWachs}. For a graph $G$ with set of vertices $[n]$, the chromatic quasisymmetric function $\csf_q(G)$ is defined as
    \[
    \csf_q(G):=\sum_{\kappa}q^{\asc_G(k)} x_\kappa.
    \]
    where the sum runs through all proper colorings of $G$  and
    \[
    \asc_G(\kappa):=|\{(i,j);i<j, \kappa(i)<\kappa(j); \{i,j\}\in E(G)\}|
    \]
    is the number of ascents of the coloring $\kappa$.  When $G$ is an indifference graph, we have that $\csf_q(G)$ is actually symmetric, so we will think of $\csf_q$ as function $\csf_q\col \D\to \Lambda_q$, where $\Lambda_q$ is the algebra of symmetric functions with coefficients in $\mathbb{Q}(q)$. \par

     With a little experimentation one can see that Equation \eqref{eq:basicpol} does not hold in general for the chromatic symmetric function of indifference graphs. However, it still holds if we add some extra assumptions on $h_0$, $h_1$ and $h_2$, which are summarized in Definition \ref{def:modular}. The purpose of this article is to determine when Equation \eqref{eq:relpol} lifts to the chromatic symmetric function (see Proposition \ref{prop:relations}). Moreover, we prove that there are enough of these liftings  to fully characterize Stanley's chromatic symmetric function on indifference graphs as stated in the following theorem.


    \begin{Thm}
    The function $\csf_q\col \D\to \Lambda_q$ is the unique function that has the following three properties.
    \begin{enumerate}[(A)]
    \item\label{item:At} It satisfies the modular law, as in Definition \ref{def:modular}.
    \item\label{item:Bt} It is multiplicative, $\csf_q(G_1\sqcup G_2)=\csf_q(G_1)\csf_q(G_2)$.
    \item It has values at complete graphs given by $\csf_q(K_n)=n!_qe_n$.
    \end{enumerate}
    \end{Thm}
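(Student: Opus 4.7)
The plan is to handle existence and uniqueness separately. For existence, multiplicativity~(B) follows immediately from the fact that a proper coloring of $G_1\sqcup G_2$ decomposes as a pair of proper colorings and the ascent statistic is additive across components, and the complete-graph value~(C) reduces to the standard $q$-Mahonian identity, since proper colorings of $K_n$ are injections into $\mathbb{N}$. Property~(A), the modular law, will be established as Proposition~\ref{prop:relations}.

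For uniqueness I would argue that any $F\col\D\to\Lambda_q$ satisfying (A), (B), (C) equals $\csf_q$ by a two-layer induction: an outer induction on the number $n$ of vertices, and an inner induction on a secondary statistic. Given a Dyck path $h$ of size $n$: if its graph is disconnected, property~(B) writes $F(h)$ as a product of $F$-values on Dyck paths with strictly fewer vertices, handled by the outer layer; if $h=n^ne^n$ is complete, property~(C) gives $F(h)$ directly. Otherwise $h$ is connected and non-complete, so its word ends in a block $ne^l n^b e^c$ with $l,b,c\geq 1$. Writing $h_0$ and $h_2$ for the paths obtained by replacing this terminal block by $e^l n^{b+1}e^c$ and $e^{l-1}n^{b+1}e^{c+1}$ respectively, the key technical step is to derive, from the modular law alone, the three-term recurrence
\[
(b+1)\,F(h)\;=\;l\,F(h_2)\;+\;(b+1-l)\,F(h_0),
\]
by iterating the basic two-term modular relations, mirroring in the axiomatic setting the Weyl-algebra derivation of Equation~\eqref{eq:relpartial} from Equation~\eqref{eq:basicpartial}.

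For the induction to close, the secondary statistic must strictly decrease from $h$ to both $h_0$ and $h_2$. A natural choice is $n-b'$, where $b'$ is the length of the last run of north steps preceding the final run of east steps: for $h$ this value is $n-b$, while for both $h_0$ and $h_2$ the analogous run has length $b+1$, so the statistic drops to $n-b-1$ (in the case of $h_0$ only when the graph remains connected; disconnection is handled by (B)). Since $b'$ is bounded above by $n$, the recursion terminates after finitely many steps in base cases covered by (B) and (C).

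The main obstacle is to verify that every triple $(h_0,h_1,h_2)$ produced by this algorithm satisfies the technical hypotheses built into Definition~\ref{def:modular}, so that Proposition~\ref{prop:relations} genuinely supplies the two-term modular relations needed to chain into the three-term recurrence above. This is the step that does real graph-theoretic work: one must check that the terminal configuration $ne^l n^b e^c$ of a connected, non-complete indifference graph is always in the ``nice'' position for a modular move. The subsequent derivation of the displayed coefficients $(b+1,\,l,\,b+1-l)$ by iteration of the basic relations is then the straightforward combinatorial shadow of the Weyl-algebra manipulation already illustrated in the introduction.
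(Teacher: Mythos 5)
There is a genuine gap, and it sits exactly at the point you flag as ``the main obstacle'': the terminal block $ne^ln^be^c$ of a connected, non-complete indifference graph is \emph{not} always in a position where the modular law applies, and your three-term recurrence $(b+1)F(h)=lF(h_2)+(b+1-l)F(h_0)$ is simply false in general. The move you describe is the one in Proposition \ref{prop:relations} with $i,j$ indexing the $e^l$ run and $b$ the length of the penultimate north run; conditions \eqref{item:cond1}, \eqref{item:cond2} and (for connected $h$) \eqref{item:cond4} do hold automatically, but condition \eqref{item:cond3} --- no east steps between the $i$-th and $(j-1)$-th \emph{north} steps --- is a constraint on the prefix of the word and fails whenever $h$ is not aligned. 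Concretely, take $h=(2,4,4,5,5)=nnen\cdot ne^2ne^2$, so $l=2$, $b=1$, $c=2$; your recurrence reads $2F(h)=2F(h_2)$ with $h_2=(2,3,5,5,5)$, i.e.\ $\csf_q(2,4,4,5,5)=\csf_q(2,3,5,5,5)$. Running the paper's algorithm (or any direct computation) at $q=1$ gives
\[
\csf(2,4,4,5,5)=10e_5+10e_{4,1}+2e_{3,2}+2e_{3,1,1},\qquad
\csf(2,3,5,5,5)=10e_5+6e_{4,1}+8e_{3,2},
\]
so the identity fails. Since $\csf_q$ does satisfy the modular law, no chain of modular moves can produce your recurrence here; it is not merely unproved but underivable. (At the level of the chromatic \emph{polynomial} the two sides agree, because the area multisets coincide --- which is precisely why the argument from the introduction works for $\chi$ but does not lift.)

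Because the terminal-block move is unavailable in the non-aligned case, your induction scheme with the secondary statistic ``$b$ increases'' collapses: there is no single local move that both is always licensed by Definition \ref{def:modular} and monotonically improves an obvious statistic. The paper's actual route is to split into cases: for \emph{aligned} irreducible non-complete $h$ it uses essentially your terminal-block move (Proposition \ref{prop:ijk2} --- there condition \eqref{item:cond3} is deduced from alignedness), while for non-aligned $h$ it locates the first non-aligned corner and performs a different move there (Proposition \ref{prop:ijk1}). The price is that termination is no longer a one-line ``$b$ increases'' observation: Theorem \ref{thm:algterm} is a genuinely delicate argument tracking the maximum height reached, the leftmost column attaining it, and the monotonicity of $g_l(a-1)$ along a hypothetical cycle of the algorithm. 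Your existence discussion and the base cases (B), (C) are fine, but the uniqueness argument needs to be rebuilt around this aligned/non-aligned dichotomy and a correct termination proof.
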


    Actually we prove a more general result. For an indifference graph $G$ with vertex set $[n]$, we denote by $G^t$ its transposed graph, that is, we relabel the vertices of $G$ via $i\mapsto n+1-i$.

    \begin{Thm}
    \label{thm:main}
    Let $A$ be a $\mathbb{Q}(q)$-algebra and let $f\col \D\to A$  be a function  that satisfies the modular law, as in Definition \ref{def:modular}. Then $f$ is determined by its values $f(K_{n_1}\sqcup K_{n_2}\sqcup\cdots\sqcup K_{n_m})$ at the disjoint ordered union of complete graphs and these values are independent of the order in which the union is taken. Moreover, we have that $f(G^{t})=f(G)$ for every indifference graph $G$.
    \end{Thm}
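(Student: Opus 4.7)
My strategy is to iterate the modular law to reduce $f(G)$ for any indifference graph $G$ to a $\mathbb{Q}(q)$-linear combination of values at ordered disjoint unions of complete graphs, and then to establish order-independence and transpose-invariance separately. The modular law provides, for a Dyck path $h_1$ containing a suitable subword of the form $ne^ln^b$ (or the transposed subword), a relation of the form $\alpha\, f(G_1)=\beta\, f(G_0)+\gamma\, f(G_2)$ in which $\alpha$ is a nonzero $q$-integer, hence a unit in $\mathbb{Q}(q)$; this is the $q$-analogue of Equation~\eqref{eq:relpol}. Solving for $f(G_1)$ expresses $f(G)$ in terms of $f(G_0)$ and $f(G_2)$, whose Dyck paths are ``closer'' to a disjoint union of staircases.

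Next I would introduce a monovariant on Dyck paths---for instance the number of edges of the associated indifference graph, or $\sum_i(h(i)-i)$---and verify that it strictly decreases on both $G_0$ and $G_2$ relative to $G_1$, so that the rewriting procedure terminates. The only Dyck paths to which no modular-law move applies are the ``staircases'' $n^{n_1}e^{n_1}\cdots n^{n_m}e^{n_m}$ corresponding to ordered disjoint unions of complete graphs, so every $f$ satisfying the modular law is determined by its values on such graphs.

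The main obstacle is to show both that this expansion is well-defined (confluence of the rewrites) and that $f(K_{n_1}\sqcup\cdots\sqcup K_{n_m})$ is invariant under permutations of $(n_1,\ldots,n_m)$. For confluence I plan a Newman-style diamond argument: when two rewritable subwords of $h$ are disjoint the associated rewrites trivially commute, and when they overlap a short case analysis should show the resulting expansions can be completed to a common one via further applications of the modular law. For permutation-invariance it suffices to treat an adjacent swap $K_a\sqcup K_b\leftrightarrow K_b\sqcup K_a$; the plan is to exhibit a non-terminal indifference graph---for example one whose Dyck path bridges the $K_a$ and $K_b$ blocks by a single extra column---and reduce it by the modular law in two distinct orders. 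One order yields $f(K_a\sqcup K_b)$ with a certain coefficient, the other yields $f(K_b\sqcup K_a)$ with the same coefficient after cancellation of intermediate terms, forcing the two values to agree. Constructing the right auxiliary graph and matching coefficients is the delicate combinatorial heart of the argument.

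Finally, transpose invariance $f(G^t)=f(G)$ follows since the transpose $h\mapsto h^t$ (reverse the word and swap $n\leftrightarrow e$) interchanges the two rewrite rules of the modular law $nen\leftrightarrow\{enn,nne\}$ and $ene\leftrightarrow\{een,nee\}$. Running the reduction algorithm on $G^t$ therefore produces precisely the transposes of the terminal graphs appearing in the reduction of $G$, with the same coefficients. Because the transpose of $K_{n_1}\sqcup\cdots\sqcup K_{n_m}$ is $K_{n_m}\sqcup\cdots\sqcup K_{n_1}$, the order-independence just established yields $f(G^t)=f(G)$.
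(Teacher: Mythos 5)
Your overall plan---iterate the modular law to reduce everything to ordered unions of complete graphs, then prove permutation-invariance and deduce transpose-invariance---is the same as the paper's. But your termination argument has a genuine gap that cannot be patched by the monovariants you name. In each instance of the modular law the relation $(1+q)f(h_1)=qf(h_0)+f(h_2)$ produces one path $h_0$ with \emph{fewer} edges (smaller $\sum_i(h(i)-i)$) and one path $h_2$ with \emph{more} edges than $h_1$; the same is true of the derived relation \eqref{eq:proprelations}, where $h_2(j-1)=h_1(j-1)+b$. So neither the edge count nor $\sum_i(h(i)-i)$ decreases on both branches, and indeed the naive rewriting can cycle (applying the same move to $h_2$ at the same position recovers $h_1$ as one of its two outputs). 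This is exactly why the paper must fix a specific strategy---always take $b$ maximal, and choose $i,j$ as in Propositions \ref{prop:ijk1} and \ref{prop:ijk2} according to whether $h$ is ``aligned''---and why Theorem \ref{thm:algterm} proves termination by a delicate contradiction argument tracking the maximum height attained along a hypothetical cycle. Relatedly, your claim that the only irreducible terminal objects are complete graphs is true but also needs an argument (it is the content of Propositions \ref{prop:ijk1} and \ref{prop:ijk2}). On the other hand, the confluence/well-definedness issue you worry about is a red herring: since $f$ is a given function and the relations are identities it satisfies, any single terminating reduction already shows $f$ is determined by its values on complete unions; no diamond lemma is needed (and Newman's lemma would anyway require the termination you have not established).

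For the permutation-invariance you correctly identify the adjacent swap $K_a\sqcup K_b\leftrightarrow K_b\sqcup K_a$ as the key case and propose a bridging graph reduced in two ways, but you leave the ``delicate combinatorial heart'' open. The paper's actual mechanism is worth knowing: for the abelian, self-transposed path $h$ with $h(1)=\cdots=h(a)=n-a$, $h(a+1)=n$, the reduction is a planar network (Remark \ref{rem:network}) giving $f(h)=\sum_{i\le a}c_i f(k_i\cdot k_{n-i})$; applying the same network to $f^t$ and using $h=h^t$ gives $f(h)=\sum_{i\le a}c_i f(k_{n-i}\cdot k_i)$, and an induction on $a$ together with the explicit nonvanishing of $c_a$ forces $f(k_a\cdot k_{n-a})=f(k_{n-a}\cdot k_a)$. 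Your final step (transpose-invariance from permutation-invariance, since $f^t$ also satisfies the modular law and agrees with $f$ on complete unions) is correct once the earlier gaps are filled.
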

    Our proof of Theorem \ref{thm:main} is constructive. We find an algorithm (Algorithm \ref{alg:alg}) based exclusively on the modular law. This algorithm was implemented in SAGE and is available upon request. \par
    
    Every function that satisfies the modular law is intimately related with the chromatic symmetric function, as seen in Corollary \ref{cor:fcsf}. It would be interesting to find functions with combinatorial interpretations that satisfy the modular law. In \cite{ANtree} the authors define one such function enumerating increasing spanning forests and use it to sharpen the description of the $e$-coefficients of unicellular LLT polynomials conjectured in \cite{Alexandersson_2020} and \cite{garsia2019epositivity}. 

     When $G$ is an indifference graph whose complement is bipartite, we observe in Remark \ref{rem:network} that the algorithm reduces to a planar network. In this situation, we show that $\csf_q(G)$ can be computed in terms of $q$-hit numbers. This is a $q$-analogue of Stanley-Stembridge combinatorial formula \cite[Theorem 4.3]{StanStem}.  We recall that the partition associated to $h$ is given by $\lambda=(n-h(1),n-h(2),\ldots, n-h(n))$.

    \begin{Thm}
    Let $h$ be an Hessenberg function whose associated indifference graph has bipartite complement and let $\lambda$ be the partition associated to $h$. If $m:=\min\{\lambda_1,\ell(\lambda)\}$, then
    \[
    \csf_q(h)=m!_qR_{m,n-m}(\lambda)e_{n-m,m}+\sum_{j<m}q^jj!_q[m-2j]_qR_{j,n-j-1}(\lambda)e_{n-j,j}.
    \]
    where $R_{j,k}(\lambda)$ are the Garsia-Remmel $q$-deformation of hit numbers, that is, it enumerates weighted rook placements on $k\times k$ board with exactly $j$ rooks on the Young diagram of $\lambda$. Moreover, we have that $\csf_q(h)$ is $e$-unimodal.
    \end{Thm}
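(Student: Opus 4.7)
The plan is to unwind the algorithm from Theorem \ref{thm:main} using the planar-network reduction highlighted in Remark \ref{rem:network}. When the complement of the indifference graph is bipartite, the vertex set of $G$ partitions naturally into two cliques whose mutual non-adjacencies form the Ferrers board of $\lambda$, and the repeated applications of the modular law correspond to routing weighted lattice paths through this board. The first step is to make this translation explicit: each invocation of the modular law becomes a local crossing/non-crossing transition carrying a $q$-factor that records an ascent, and the output of the algorithm is then the total weight of the network.

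The second step is the main computation. I would evaluate the total weight of the planar network via a Lindstr\"om--Gessel--Viennot style argument, expressing it as a sum indexed by rook placements on the Young diagram of $\lambda$. Because $\bar G$ bipartite forces every independent set of $G$ to have size at most two, only partitions with at most two parts can occur in the elementary basis, so the answer is forced into the shape $\sum_{j} c_j(q)\,e_{n-j,j}$. The heart of the argument is to match $c_j(q)$ with the Garsia--Remmel $q$-hit number $R_{j,n-j-1}(\lambda)$ (respectively $R_{m,n-m}(\lambda)$ for $j=m$), by identifying rooks with path crossings and carefully tracking the $q$-power contributed at each crossing. The multiplicative prefactors $q^j j!_q [m-2j]_q$ (or $m!_q$ when $j=m$) should arise as the $q$-analogue of the multinomial coefficient counting the linear orderings of the two cliques that are compatible with the ascent statistic, essentially the $q$-version of the combinatorial factor appearing in Stanley--Stembridge \cite{StanStem}.

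For $e$-unimodality, I would invoke the palindromicity and unimodality of the Garsia--Remmel $q$-hit numbers (established by Dworkin and Haglund) and observe that $q^j j!_q [m-2j]_q$ is itself palindromic and unimodal with matching center of symmetry; since products of palindromic unimodal polynomials sharing a common center of symmetry remain palindromic and unimodal, the coefficient of each $e_{n-j,j}$ inherits these properties. The principal obstacle is the explicit identification in step two of the path-weight statistic produced by the algorithm with the Garsia--Remmel rook statistic, in particular reconciling the boundary case $j=m$ (where the board size jumps from $n-j-1$ to $n-m$ and the factor $[m-2j]_q$ would vanish) with the generic case $j<m$; this will likely require either a separate direct analysis at $j=m$ or a unifying bijection that absorbs the boundary discrepancy into the rook-placement side.
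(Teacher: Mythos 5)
Your proposal takes a genuinely different route from the paper, but as written it has a real gap at its center. The paper never evaluates the planar network against rook placements at all: instead it observes that the right-hand side of the identity, viewed as a function of $h$, \emph{itself satisfies the modular law}. This is the content of Lemmas \ref{lem:Rjm} and \ref{lem:Rjrel}, which follow from Dworkin's deletion--contraction recurrences for the Garsia--Remmel $q$-hit numbers (one checks that $(1+q)R_{j,m}(\lambda)=qR_{j,m}(\mu)+R_{j,m}(\sigma)$ for the relevant local moves on $\lambda$, and that $R_{j,m}(\lambda)=([m]_q-[j]_q)R_{j,m-1}(\lambda)$ in the boundary situation). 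Once both sides satisfy the modular law, the uniqueness statement of Theorem \ref{thm:main} reduces everything to checking the identity at $h=k_m\cdot k_{n-m}$, where $\lambda$ is a rectangle and both sides are visibly $m!_q(n-m)!_q\,e_{n-m,m}$. This also disposes of your worrying boundary case $j=m$ for free.

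The gap in your version is precisely the step you flag as the ``principal obstacle'': the identification of the path-weight statistic of the network with Dworkin's $\lambda$-weight on rook placements, together with the extraction of the prefactors $q^jj!_q[m-2j]_q$ and $m!_q$. You assert that these ``should arise'' from a $q$-multinomial count and that the matching ``will likely require'' a bijection, but you do not construct it, and this is the entire mathematical content of the theorem in your approach --- the network weights are ratios $[a_{i,j}]_q/[n-j+1]_q$ and differences thereof, and it is not at all routine that summing their products over lattice paths reproduces the $\wt_\lambda$ generating function over $B_{j,m}(\lambda)$. (Also, ``Lindstr\"om--Gessel--Viennot'' is a misnomer here: the network computes a single weighted sum over paths from one source, with no determinantal or non-intersecting structure to exploit.) If you want to keep your framework, the efficient fix is to replace the bijection by the paper's observation: verify the modular law for the $q$-hit-number expression via the known recurrences and then compare only at $k_m\cdot k_{n-m}$. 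Your unimodality argument via Haglund's unimodality of $q$-hit numbers and palindromicity of the prefactors is fine and coincides with the paper's remark following the theorem.
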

    
      In the last section we discuss the logarithmic concavity of the coefficients of the chromatic quasisymmetric function. In the breakthrough work \cite{Huh} it is proved that the chromatic polynomial of a graph is log-concave. This result was later generalized to matroids in \cite{AHK}. We supply some evidence supporting the logarithmic concavity of the $e$-coefficients of $\csf_q(h)$ for $h\in \D$.

     We point out that several analogues of deletion-contraction exist for the chromatic symmetric function (or some closely related symmetric functions). A  non-commutative chromatic symmetric function is defined in \cite{Sagan} which satisfies a deletion-contraction recurrence. In \cite{GPmodular}, a \emph{modular law} for the chromatic symmetric function is introduced for any graph. When restricted to indifference graphs it is the analogue of Equation \eqref{eq:basicpol}. In \cite{Lee}, this relation is found for the closely related unicellular LLT polynomial.  A chromatic symmetric function for weighted graphs is defined in \cite{crew} which satisfies a deletion-contraction recurrence when one considers contractions of weighted graphs.  Other linear relations in various settings can be found in \cite{OrellanaScott}, \cite{HNY},  \cite{dadderio}, and \cite{AS2020}. \par

      It is also worth mentioning that, for unicellular LLT polynomials, the analogy with differential operators was made precise in \cite{CarlssonMellit}. They defined operators $d_{-}$ and $d_{+}$ which play the roles of $x$ and $\partial$ in the discussion above, and proved that the unicellular LLT polynomial associated to a Dyck path $h$ can be computed as $d_h(1)$ where $d_h$ is the operator obtained from $h$ by replacing each east step with $d_{+}$ and each north step with $d_{-}$.\par

      In the recent paper \cite{AS2020} it is demonstrated how to obtain the chromatic symmetric function from a similar set of relations (see \cite[Corollary 6.16]{AS2020}). Actually, one of the relations used in loc. cit. is contained in the modular law used here. The authors also consider a \emph{bounce relation} on Schr\"oeder paths that implies the modular law and other relations.\par

\section{The algorithm}

      Our main goal in this section is to prove Theorem \ref{thm:main}. We first need some notation.
      We denote the set of Dyck paths by $\D$ and by $\D_n$ the set of Dyck paths of size $n$. We will also think of $\D$ as the set of Hessenberg functions via the identification between Dyck paths and Hessenberg functions. There is a (non-commutative) product on the set $\D$ given by concatenation of Dyck paths, while on Hessenberg functions the product of $h_1\col [n_1]\to[n_1]$ with $h_2\col [n_2]\to [n_2]$ is the function $h\col [n_1+n_2]\to [n_1+n_2]$ given by $h(i)=h_1(i)$ if $i\in [n_1]$ and $h(i)=h_2(i)+n_1$ if $i\in \{n_1+1,\ldots, n_1+n_2\}$. We denote this product by $h=h_1\cdot h_2$. We say that $h$ is irreducible if it cannot be written as a product of non-trivial Hessenberg functions, or equivalently, if the Dyck path associated to $h$ does not touch the diagonal. Every $h$ is written uniquely as the product of irreducible Hessenberg functions, which are called the irreducible components of $h$. There is an involution on $\D$ given by transposing the Dyck paths, and we denote by $h^t$ the transpose of $h$. We let $k_n$ be the unique Hessenberg function in $\D_n$ with $k_n(1)=n$, and call it \emph{complete}.\par
        Also, we define $[n]_q:=\frac{q^n-1}{q-1}$ and $n!_q=\prod_{j=1}^n [j]_q$  in $\mathbb{Q}(q)$ and let $\A$ be a $\mathbb{Q}(q)$-algebra.
      \begin{Def}
      \label{def:modular}
      We say that a function $f\col \D\to \A$ satisfies the  \emph{modular law} if
      \begin{equation}
      \label{eq:rec}
      (1+q)f(h_1)=qf(h_0)+f(h_2)
      \end{equation}
      whenever one of the following conditions hold
      \begin{enumerate}
          \item\label{item:1} There exists $i\in [n-1]$ such that $h_1(i-1)<h_1(i)<h_1(i+1)$ and $h_1(h_1(i))=h_1(h_1(i)+1)$ or $h_1(i)=n$. Moreover, $h_0$ and $h_2$ satisfy $h_k(j):=h_1(j)$ for every $j\neq i$ and $k=0,2$, while $h_k(i)=h_1(i)-1+k$.
          \item\label{item:2} There exists $i\in [n-1]$ such that $h_1(i+1)=h_1(i)+1$ and $h_1^{-1}(i)=\emptyset$. Moreover, $h_0$ and $h_2$ satisfy $h_k(j):=h_1(j)$ for every $j\neq i,i+1$ and $k=0,2$, while $h_0(i)=h_0(i+1)=h_1(i)$ and $h_2(i)=h_2(i+1)=h_1(i+1)$.
      \end{enumerate}
      \end{Def}
      We note that if we define the function $f^t\col \D\to \A$ by $f^t(h)=f(h^t)$, then $f$ satisfies the modular law if and only if $f^t$ satisfies it as well.  Throughout this section $f\col \D\to \A$ will be a function satisfying the modular law.\par
     Conditions \eqref{item:1} and \eqref{item:2} can also be seen in the associated Dyck paths (see \cite[Equation 12]{AS2020} for precise definitions).\par

    As in the introduction, we proceed by constructing more general relations starting from Equation \eqref{eq:rec}. This is the content of Propositions \ref{prop:basicrel},  \ref{prop:basicreldual} and \ref{prop:relations} below. The first two are proved in \cite[Theorem 3.4 (a)]{HNY}.

 \begin{Prop}
    \label{prop:basicrel}
    Let $h_1$ be a Hessenberg function and $1\leq i<j\leq n$ be integers such that
\begin{enumerate}
    \item either $h_1(i-1)<h_1(i)$, or $i=1$ and $h_1(1)>1$.
    \item $j-1<h_1(i)=h_1(i+1)=\ldots=h_1(j-1)$
    \item $h_1^{-1}(\{i,\ldots, j-2\})=\emptyset$.
\end{enumerate}
If
\[
h_0(l):=\begin{cases}
h_1(l)-1&\text{if }l\in \{i,\ldots, j-1\},\\
h_1(l)&\text{otherwise},
\end{cases}
\text{ and }
h_2(l):=\begin{cases}
h_1(l)-1 &\text{ if }l\in \{i,\ldots, j-2\},\\
h_1(l)&\text{ otherwise.}
\end{cases}
\]
Then
\[
f(h_1)=[j-i]_qf(h_2)+(1-[j-i]_q)f(h_0).
\]
\begin{figure}[htb]  \centering
\begin{tikzpicture}
\begin{scope}[scale=0.4]
\draw[help lines] (0,0) grid +(5,3);
\dyckpath{0,0}{1,0,0,0,0,0,1,1}
\node at (2.3,-1) {$h_1$};
\end{scope}
\begin{scope}[scale=0.4,shift={(7,0)}]
\draw[help lines] (0,0) grid +(5,3);
\dyckpath{0,0}{0,0,0,0,0,1,1,1}
\node at (2.3,-1) {$h_0$};
\end{scope}
\begin{scope}[scale=0.4, shift={(14,0)}]
\draw[help lines] (0,0) grid +(5,3);
\dyckpath{0,0}{0,0,0,0,1,0,1,1}
\node at (2.3,-1) {$h_2$};
\end{scope}
\end{tikzpicture}
\caption{The relevant pieces of the Dyck paths $h_1$, $h_0$ and $h_2$.}
\label{fig:h0h2a}
\end{figure}
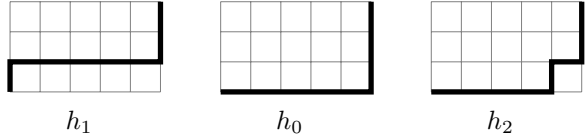
\end{Prop}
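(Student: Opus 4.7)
The plan is to interpolate between $h_1$, $h_2$, and $h_0$ by an explicit family of Hessenberg functions and then to apply part (2) of the modular law (from Definition~\ref{def:modular}) at a single carefully chosen position in each intermediate step. Setting $l := j - i$ and $a := h_1(i) = \ldots = h_1(j-1)$, define for $0 \le k \le l$
\[
q_k(r) := \begin{cases} a - 1 & \text{if } r \in \{i, \ldots, i+k-1\}, \\ a & \text{if } r \in \{i+k, \ldots, j-1\}, \\ h_1(r) & \text{otherwise.} \end{cases}
\]
Then $q_0 = h_1$, $q_{l-1} = h_2$, and $q_l = h_0$. Hypothesis (1)---either in the form $h_1(i-1) \le a - 1$ (when $i \ge 2$) or $a \ge 2$ (when $i = 1$, since $a > j - 1 \ge 1$)---ensures that each $q_k$ is a legitimate Hessenberg function. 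The base case $l = 1$ is immediate, as then $h_2 = h_1$ and $[1]_q = 1$.

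The main technical step is to show that, for each $1 \le k \le l - 1$, part~(2) of the modular law is applicable to $q_k$ at position $i + k - 1$. The equality $q_k(i+k) = q_k(i+k-1) + 1$ is just $a = (a - 1) + 1$. The emptiness of $q_k^{-1}(i + k - 1)$ is the place where hypothesis~(3) is used: positions $r \notin \{i, \ldots, j-1\}$ contribute only through $h_1(r)$, and $h_1^{-1}(i+k-1) = \emptyset$ since $i + k - 1 \in \{i, \ldots, j - 2\}$; positions in the modified block $\{i, \ldots, j-1\}$ take only the values $a - 1$ and $a$, and $a - 1 \ge j - 1 > i + k - 1$ excludes both. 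A direct unwinding of the recipe in Definition~\ref{def:modular}(2) identifies the two resulting Hessenberg functions with $q_{k+1}$ and $q_{k-1}$, producing the three-term recurrence
\[
(1 + q) f(q_k) = q f(q_{k+1}) + f(q_{k-1}), \qquad 1 \le k \le l - 1.
\]

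To finish, I would solve this recurrence by induction on $l$. The $q$-integer identity $(1 + q)[l-1]_q - q [l-2]_q = [l]_q$ propagates the closed form $f(q_0) = [l]_q f(q_{l-1}) - q [l-1]_q f(q_l)$ one substitution at a time. Substituting $q_0 = h_1$, $q_{l-1} = h_2$, $q_l = h_0$, and using $1 - [l]_q = -q [l-1]_q$, this recovers exactly the claimed formula $f(h_1) = [l]_q f(h_2) + (1 - [l]_q) f(h_0)$.

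The main obstacle I anticipate is the verification of the preimage condition $q_k^{-1}(i + k - 1) = \emptyset$, which is the only place where hypothesis (3) of the proposition genuinely enters and which requires bookkeeping across all three types of positions in the definition of $q_k$. Everything else reduces to direct inspection of the definitions and routine manipulation of $q$-integers.
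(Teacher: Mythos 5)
Your proof is correct and follows essentially the same route as the paper: interpolate between $h_1$ and $h_0$ by a chain of intermediate Hessenberg functions, apply condition (2) of the modular law to obtain the three-term recurrence $(1+q)f(q_k)=qf(q_{k+1})+f(q_{k-1})$, and telescope using $(1+q)[m]_q-q[m-1]_q=[m+1]_q$. Your indexing (subtracting $1$ on $\{i,\ldots,i+k-1\}$ so that $q_0=h_1$ genuinely holds) and your explicit verification of $q_k^{-1}(i+k-1)=\emptyset$ are in fact slightly more careful than the paper's own write-up.
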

\begin{proof}
We define $g_k$, for $k\in \{0,\ldots, j-i-1\}$ as
\[
g_k(l):=\begin{cases}
h_1(l)-1 &\text{ if }l\in \{i,\ldots, i+k\}\\
h_1(l)& \text{ otherwise}.
\end{cases}
\]
Since $f$ satisfies the modular law, and each triple $g_{k}$, $g_{k+1}$ and $g_{k-1}$ satisfies condition \eqref{item:2}, we have that
\[
(1+q)f(g_k)=qf(g_{k+1})+f(g_{k-1}),
\]
and since $g_0=h_1$, $g_{j-i-2}=h_2$ and $g_{j-i-1}=h_0$, we get the result.
\end{proof}

\begin{Prop}
\label{prop:basicreldual}
Let $h_1$ be a Hessenberg function and $1\leq i\leq n$ be an integer such that
\begin{enumerate}
    \item either $h_1(i-1)<h_1(i)$, or $i=1$ and $h_1(1)>1$;
    \item there exist $1\leq a<h_1(i)$ such that $h_1(a+1)=h_1(a+2)=\ldots=h_1(h_1(i))$. (Usually, we will consider $a=h_1(i-1)$.)
\end{enumerate}
If
\[
h_0(l):=\begin{cases}
a &\text{ if }l=i\\
h_1(l)& \text{ otherwise}
\end{cases}
\;\text{ and }\;
h_2(l):=\begin{cases}
a+1 &\text{ if }l=i\\
h_1(l)& \text{ otherwise}
\end{cases}
\]
Then
\[
f(h_1)=[h(i)-a]_qf(h_2)+(1-[h(i)-a]_q)f(h_0).
\]
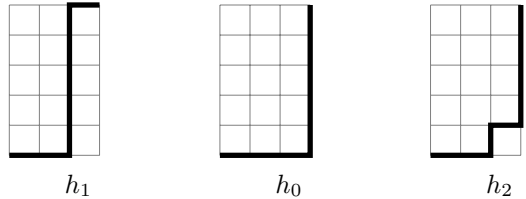
\begin{figure}[htb]  \centering
\begin{tikzpicture}
\begin{scope}[scale=0.4]
\draw[help lines] (0,0) grid +(3,5);
\dyckpath{0,0}{0,0,1,1,1,1,1,0}
\node at (2.3,-1) {$h_1$};
\end{scope}
\begin{scope}[scale=0.4,shift={(7,0)}]
\draw[help lines] (0,0) grid +(3,5);
\dyckpath{0,0}{0,0,0,1,1,1,1,1}
\node at (2.3,-1) {$h_0$};
\end{scope}
\begin{scope}[scale=0.4, shift={(14,0)}]
\draw[help lines] (0,0) grid +(3,5);
\dyckpath{0,0}{0,0,1,0,1,1,1,1}
\node at (2.3,-1) {$h_2$};
\end{scope}
\end{tikzpicture}
\caption{The relevant pieces of the Dyck paths $h_1$, $h_0$ and $h_2$.}
\label{fig:h0h2s}
\end{figure}
\end{Prop}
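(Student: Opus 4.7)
The plan is to mimic the proof of Proposition \ref{prop:basicrel}, but now interpolate by varying the value of $h_1$ at the single position $i$ rather than sweeping over a range of positions. Setting $N := h_1(i) - a$, I would define a sequence of Hessenberg functions $g_0, g_1, \ldots, g_N$ by
\[
g_k(l) := \begin{cases} h_1(i) - k & \text{if } l = i, \\ h_1(l) & \text{otherwise,} \end{cases}
\]
so that $g_0 = h_1$, $g_{N-1} = h_2$, and $g_N = h_0$. The task is then to express $f(g_0)$ in terms of $f(g_{N-1})$ and $f(g_N)$.

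The main step is to show that for each $k \in \{1, \ldots, N-1\}$ the triple $(g_{k-1}, g_k, g_{k+1})$ satisfies condition \eqref{item:1} of the modular law at position $i$, yielding the three-term recurrence
\[
(1+q)\,f(g_k) \;=\; q\,f(g_{k+1}) + f(g_{k-1}).
\]
The strict inequalities $g_k(i-1) < g_k(i) < g_k(i+1)$ reduce to $h_1(i-1) < h_1(i)-k < h_1(i+1)$, which follow from hypothesis (1), the monotonicity of $h_1$, and the implicit assumption $a \geq h_1(i-1)$ (needed already for $h_0$ to be a Hessenberg function). The plateau condition $g_k(g_k(i)) = g_k(g_k(i)+1)$ reduces to $h_1(h_1(i)-k) = h_1(h_1(i)-k+1)$, which is an instance of hypothesis (2) because $a+1 \leq h_1(i)-k \leq h_1(i)-1$. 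The delicate bookkeeping here is the main obstacle: one must check that the indices $h_1(i)-k$ and $h_1(i)-k+1$ stay inside the constant stretch of $h_1$ and are distinct from $i$ itself, so that the modification of $g_k$ at position $i$ does not alter the values being compared.

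Given the recurrence, the rest is a routine finite-difference calculation. The characteristic polynomial $qx^2 - (1+q)x + 1 = (1-x)(1-qx)$ has roots $1$ and $q^{-1}$, so the solution takes the form $f(g_k) = A + B q^{-k}$ with $A,B$ pinned down by the boundary values $f(g_{N-1}) = f(h_2)$ and $f(g_N) = f(h_0)$. Evaluating at $k = 0$ and using the identity $1 - [N]_q = (q - q^N)/(q-1)$ produces
\[
f(g_0) \;=\; [N]_q\, f(g_{N-1}) + (1 - [N]_q)\, f(g_N),
\]
which is the claimed formula with $N = h_1(i) - a$. Alternatively, one can obtain the same identity by induction on $N$, applying the recurrence once at each step.
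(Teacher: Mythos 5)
Your proposal is correct and is essentially the paper's own argument: the paper proves this proposition by saying it is "analogous to Proposition \ref{prop:basicrel}, using condition \eqref{item:1} in place of \eqref{item:2}," which is precisely your interpolating chain $g_0,\ldots,g_N$ obtained by decrementing the single value at position $i$, followed by solving the resulting three-term recurrence. Your additional bookkeeping (the implicit hypothesis $a\geq h_1(i-1)$, and the check that $g_k(i)$ and $g_k(i)+1$ lie in the constant stretch and exceed $i$) is exactly the verification the paper leaves tacit.
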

\begin{proof}
The proof is analogous to that of Proposition \ref{prop:basicrel}, using condition \eqref{item:1} in place of \eqref{item:2}.
\end{proof}

We now state the analogue of Equation \eqref{eq:relpol} for functions that satisfy the modular law.

\begin{Prop}
\label{prop:relations}
Let $h_1$ be a Hessenberg function and $1\leq i<j\leq n$ be integers such that
\begin{enumerate}
    \item\label{item:cond1} either $h_1(i-1)<h_1(i)$, or $i=1$ and $h_1(1)>1$.
    \item\label{item:cond2} $j-1<h_1(i)=h_1(i+1)=\ldots=h_1(j-1)<h_1(j)$.
    \item\label{item:cond3} $h_1^{-1}(\{i,\ldots, j-2\})=\emptyset$.
    \item\label{item:cond4} There exists $1\leq b\leq h_1(j)-h_1(i)$ such that $h_1(h_1(i))=h_1(h_1(i)+1)=h_1(h_1(i)+2)=\ldots=h_1(h_1(i)+b)$.
\end{enumerate}
If
\[
h_0(l):=\begin{cases}
h_1(l)-1 &\text{ if }l\in \{i,\ldots, j-1\}\\
h_1(l)& \text{ otherwise}
\end{cases}
\text{ and }
h_2(l):=\begin{cases}
h_1(l)-1 &\text{ if }l\in \{i,\ldots, j-2\}\\
h_1(l)+b & \text{ if }l=j-1\\
h_1(l)&\text{ otherwise}
\end{cases}
\]
Then
\begin{equation}
    \label{eq:proprelations}
[b+1]_qf(h_1)=[j-i]_qf(h_2)+([b+1]_q-[j-i]_q)f(h_0).
\end{equation}
\begin{figure}[htb]  \centering
\begin{tikzpicture}
\begin{scope}[scale=0.4]
\draw[help lines] (0,0) grid +(5,3);
\dyckpath{0,0}{1,0,0,0,0,0,1,1}
\node at (2.3,-1) {$h_1$};
\end{scope}
\begin{scope}[scale=0.4,shift={(7,0)}]
\draw[help lines] (0,0) grid +(5,3);
\dyckpath{0,0}{0,0,0,0,0,1,1,1}
\node at (2.3,-1) {$h_0$};
\end{scope}
\begin{scope}[scale=0.4, shift={(14,0)}]
\draw[help lines] (0,0) grid +(5,3);
\dyckpath{0,0}{0,0,0,0,1,1,1,0}
\node at (2.3,-1) {$h_2$};
\end{scope}
\end{tikzpicture}
\caption{The relevant pieces of the Dyck paths $h_1$, $h_0$ and $h_2$. Compare with Equation \eqref{eq:relpartial}.}
\label{fig:h0h2}
\end{figure}
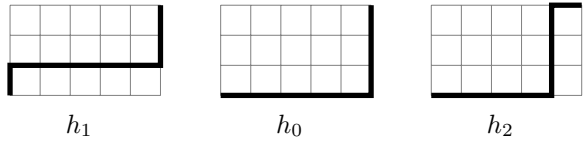
\end{Prop}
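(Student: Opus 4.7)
The plan is to isolate a single intermediate Hessenberg function $h'$ to which both Proposition \ref{prop:basicrel} and Proposition \ref{prop:basicreldual} apply, and then eliminate $f(h')$ between the two resulting linear relations. Set $H := h_1(i)$ and define
\[
h'(l) := \begin{cases} h_1(l) - 1 & \text{if } l \in \{i, i+1, \ldots, j-2\}, \\ h_1(l) & \text{otherwise,} \end{cases}
\]
so that $h'(j-1) = H$. The three paths $h_0$, $h'$, $h_2$ agree outside the single position $l = j-1$, where they take the values $H-1$, $H$, $H+b$ respectively; so $h'$ is the common ``neighbor'' of $h_0$ and $h_2$ that the two basic propositions will share.

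The first relation comes from Proposition \ref{prop:basicrel} applied directly to $h_1$: its three hypotheses are literally conditions (\ref{item:cond1}), (\ref{item:cond2}), (\ref{item:cond3}) of the present statement, and its output paths ``$h_0$'' and ``$h_2$'' are our $h_0$ and $h'$. This yields
\[
f(h_1) = [j-i]_q f(h') + (1 - [j-i]_q) f(h_0).
\]
The second relation comes from applying Proposition \ref{prop:basicreldual} to $h_2$ at position $j-1$, with $a = H-1$. Hypothesis (\ref{item:cond1}) forces $h_2(j-2) < h_2(j-1)$ (or handles the boundary case $j-1 = 1$); hypothesis (\ref{item:cond4}) supplies the plateau $h_2(H) = h_2(H+1) = \cdots = h_2(H+b)$, because all these indices exceed $j-1$ and therefore lie outside the range $\{i, \ldots, j-1\}$ on which $h_2$ differs from $h_1$, so the required equalities are just hypothesis (\ref{item:cond4}) read for $h_1$. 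Since $[h_2(j-1) - a]_q = [b+1]_q$ and the two output paths of Proposition \ref{prop:basicreldual} are exactly our $h_0$ and $h'$, we obtain
\[
f(h_2) = [b+1]_q f(h') + (1 - [b+1]_q) f(h_0).
\]

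To finish, multiply the first identity by $[b+1]_q$ and the second by $[j-i]_q$ and subtract: the $f(h')$ terms cancel, and the coefficient of $f(h_0)$ collapses as
\[
[b+1]_q(1 - [j-i]_q) - [j-i]_q(1 - [b+1]_q) = [b+1]_q - [j-i]_q,
\]
which is exactly Equation \eqref{eq:proprelations}. The main obstacle is really only bookkeeping: one must check that $h_2$ is a genuine Hessenberg function and that the plateau hypothesis of Proposition \ref{prop:basicreldual} is satisfied for $h_2$ at position $j-1$; both of these reduce directly to unpacking hypotheses (\ref{item:cond2}) and (\ref{item:cond4}). The degenerate case $j = i+1$ requires no separate treatment: it merely makes $h' = h_1$ so that the first identity becomes tautological, and the second identity alone still delivers the claim.
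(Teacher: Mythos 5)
Your proof is correct and follows essentially the same route as the paper: define the intermediate path $h'$, apply Proposition \ref{prop:basicrel} to $h_1$ and Proposition \ref{prop:basicreldual} to $h_2$ (with $a=h_1(i)-1$), and eliminate $f(h')$. Your verification of the hypotheses of Proposition \ref{prop:basicreldual} for $h_2$ and your treatment of the degenerate case $j=i+1$ are in fact more explicit than the paper's argument, which leaves these checks implicit.
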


\begin{proof}
Define $h'$ (see Figure \ref{fig:h'}) as
\[
h'(l)=\begin{cases}
h_1(l)-1& \text{ if } l \in {i,\ldots,j-2}\\
h_1(l)& \text{ otherwise}
\end{cases}
\]

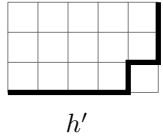
\begin{figure}[htb]  \centering
\begin{tikzpicture}
\begin{scope}[scale=0.4]
\draw[help lines] (0,0) grid +(5,3);
\dyckpath{0,0}{0,0,0,0,1,0,1,1}
\node at (2.3,-1) {$h'$};
\end{scope}
\end{tikzpicture}
\caption{The relevant piece of the Dyck path $h'$.}
\label{fig:h'}
\end{figure}

By Proposition \ref{prop:basicrel} applied to $h_1$ and Proposition \ref{prop:basicreldual}  applied to $h_2$, we have that
\begin{align*}
f(h_1)&=[j-i]_qf(h')+(1-[j-i]_q)f(h_0)\\
f(h_2)&=[k+1]_qf(h')+(1-[k+1]_q)f(h_0)
\end{align*}
Then
\[
[k+1]_qf(h_1)-[j-i]_qf(h_2)=([k+1]_q-[j-i]_q)f(h_0),
\]
and the result follows.
\end{proof}

 In terms of the Dyck path, the choice of $i,j,b$ determines a subpath $ne^{j-i}n^b$ of the Dyck path induced by $h$.  Condition \eqref{item:cond3} means that there are no east steps between the $i$-th and $(j-1)$-th north steps (see Figure \ref{fig:3}), while condition \eqref{item:cond4} means that there are no north steps between the $h(i)$-th and $(h(i)+b)$-th east steps (see Figure \ref{fig:4}). An example where both conditions are satisfied can be seen in Figure \ref{fig:sat34}.\par

\begin{figure}[htb]  \centering
\begin{minipage}{0.49\linewidth}
\begin{center}
\begin{tikzpicture}
\begin{scope}[scale=0.3]
\draw[help lines] (0,0) grid +(8,8);
\dyckpath{0,0}{1,1,0,1,1,0,0,1,1,0,0,1,1,0,0,0}
\dyckpathc{1,3}{1,0,0,1,1}{yellow}
\gline{0,0}{8,8}{blue}{1}
\gline{1,2}{2,2}{red}{0.5}
\gline{2,2}{2,4}{red}{0.5}
\node at (1.5,-0.5) {$i$};
\node at (3.5,-0.5) {$j$};
\node at (-0.7,5) {$b$};
\draw [decorate,decoration={brace,amplitude=3pt},xshift=-4pt,yshift=0pt] (0,4) -- (0,6);
\end{scope}
\end{tikzpicture}
\end{center}
\caption{An example of a choice of $i,j,b$ that does not satisfy condition \eqref{item:cond3}.}
\label{fig:3}
\end{minipage}
\begin{minipage}{0.49\linewidth}
\begin{center}
\begin{tikzpicture}
\begin{scope}[scale=0.3]
\draw[help lines] (0,0) grid +(8,8);
\dyckpath{0,0}{1,1,1,0,1,0,0,1,1,0,1,0,1,0,0,0}
\dyckpathc{1,3}{1,0,0,1}{yellow}
\gline{0,0}{8,8}{blue}{1}
\gline{3,4}{4,4}{red}{0.5}
\gline{4,4}{4,6}{red}{0.5}
\node at (1.5,-0.5) {$i$};
\node at (3.5,-0.5) {$j$};
\node at (-0.7,4.5) {$b$};
\draw [decorate,decoration={brace,amplitude=3pt},xshift=-4pt,yshift=0pt] (0,4) -- (0,5);
\end{scope}
\begin{scope}[scale=0.3, shift={(10,0)}]
\draw[help lines] (0,0) grid +(8,8);
\dyckpath{0,0}{1,1,1,0,1,0,1,1,1,0,0,0,1,0,0,0}
\dyckpathc{1,3}{1,0,1,1}{yellow}
\gline{0,0}{8,8}{blue}{1}
\gline{2,5}{5,5}{red}{0.5}
\gline{5,5}{5,7}{red}{0.5}
\node at (1.5,-0.5) {$i$};
\node at (2.5,-0.5) {$j$};
\node at (-0.7,5) {$b$};
\draw [decorate,decoration={brace,amplitude=3pt},xshift=-4pt,yshift=0pt] (0,4) -- (0,6);
\end{scope}
\end{tikzpicture}
\end{center}
\caption{Examples of two choices of $i,j,b$ that does not satisfy condition \eqref{item:cond4}.}
\label{fig:4}
\end{minipage}
\end{figure}

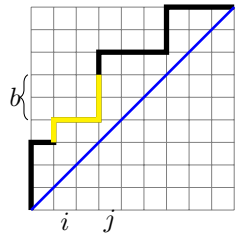
\begin{figure}[htb]  \centering
\begin{tikzpicture}
\begin{scope}[scale=0.3]
\draw[help lines] (0,0) grid +(9,9);
\dyckpath{0,0}{1,1,1,0,1,0,0,1,1,1,0,0,0,1,1,0,0,0}
\dyckpathc{1,3}{1,0,0,1,1}{yellow}
\gline{0,0}{9,9}{blue}{1}
\node at (1.5,-0.5) {$i$};
\node at (3.5,-0.5) {$j$};
\node at (-0.7,5) {$b$};
\draw [decorate,decoration={brace,amplitude=3pt},xshift=-4pt,yshift=0pt] (0,4) -- (0,6);
\end{scope}
\end{tikzpicture}
\caption{An example of a choice of $i,j,b$ that satisfy conditions \eqref{item:cond3} and \eqref{item:cond4}.}
\label{fig:sat34}
\end{figure}

The idea is that, for every $h\in \D_n$ with at least one irreducible component that is not complete, there always exist integers $i,j,b$ satisfying the conditions in Proposition \ref{prop:relations}. So we can reduce the computation of $f(h_1)$ to the computations of $f(h_0)$ and $f(h_2)$. As long as we always choose the greatest possible $b$, this process actually terminates.\par
  To see that there always exist such $i,j,b$, we say that a Hessenberg function $h$ is \emph{aligned} if, for every $i=1,\ldots, n$, we have that either $h(h(i)+1)>h(h(i))$ or $h(h(i))=n$ (see Figure \ref{fig:ali}). We note that a Hessenberg function is aligned if and only if its irreducible components are aligned as well.
\begin{figure}[htb]  \centering
\begin{tikzpicture}
\begin{scope}[scale=0.8]
\begin{scope}[scale=0.4]
\draw[help lines] (0,0) grid +(7,7);
\dyckpath{0,0}{1,1,0,1,1,1,0,1,0,0,0,1,0,0}
\gline{0,0}{7,7}{blue}{1}
\gline{1,2}{2,2}{red}{0.5}
\gline{2,5}{2,2}{red}{0.5}
\gline{2,5}{5,5}{red}{0.5}
\gline{5,6}{5,5}{red}{0.5}
\node at (3.5,-1) {Aligned};
\end{scope}
\begin{scope}[scale=0.4,shift={(10,0)}]
\draw[help lines] (0,0) grid +(7,7);
\dyckpath{0,0}{1,1,1,0,1,0,1,0,1,0,1,0,0,0}
\gline{0,0}{7,7}{blue}{1}
\gline{1,3}{3,3}{red}{0.5}
\gline{3,5}{3,3}{red}{0.5}
\gline{2,4}{4,4}{red}{0.5}
\gline{4,6}{4,4}{red}{0.5}
\node at (3.5,-1) {Aligned};
\end{scope}
\begin{scope}[scale=0.4, shift={(20,0)}]
\draw[help lines] (0,0) grid +(7,7);
\dyckpath{0,0}{1,1,0,1,1,1,0,0,1,0,0,1,0,0}
\gline{0,0}{7,7}{blue}{1}
\gline{1,2}{2,2}{red}{0.5}
\gline{3,5}{3,3}{red}{0.5}
\node at (3.5,-1) {Non aligned};
\end{scope}
\end{scope}
\end{tikzpicture}
\caption{An aligned and non aligned Hessenberg function}
\label{fig:ali}
\end{figure}
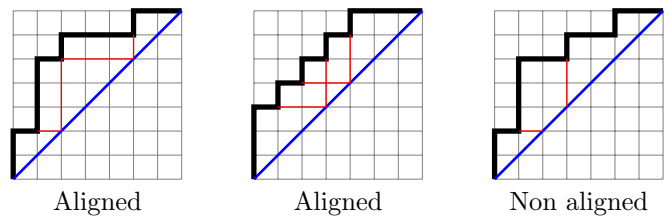

\begin{Prop}
\label{prop:ijk1}
Let $h\in \D_n$ be a non aligned Hessenberg function. Let $i$ be the smallest integer such that $h(h(i)+1)=h(h(i))$ and $h(h(i))<n$. Define $j:=\min\{l; l>i, h(l)>h(i)\}$ and $b:=\max\{l;l\geq 1, h(h(i)+l)=h(i), h(i)+l\leq h(j)\}$. Then $h$ and $i,j,b$ satisfy the conditions in Proposition \ref{prop:relations}.
\end{Prop}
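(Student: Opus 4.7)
The plan is to verify the four conditions of Proposition~\ref{prop:relations} for the triple $(i,j,b)$ constructed in the statement, using the minimality of $i$ at two separate points. As a preliminary, both $j$ and $b$ are well defined: if no $l>i$ satisfied $h(l)>h(i)$, then monotonicity of $h$ together with $h(n)=n$ would force $h(i)=n$ and hence $h(h(i))=n$, contradicting the defining property of $i$; and $l=1$ belongs to the set whose maximum defines $b$, since $h(h(i)+1)=h(h(i))$ by the choice of $i$ and $h(j)\geq h(i)+1$ by the definition of $j$.

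For condition~(1), we argue by contradiction using the minimality of $i$. If $i>1$ and $h(i-1)=h(i)$, then $h(h(i-1)+1)=h(h(i)+1)=h(h(i))=h(h(i-1))$ and $h(h(i-1))<n$, so $i-1$ would already satisfy the property defining $i$; and if $i=1$, then $h(1)=1$ would force $h(2)=h(h(1)+1)=h(h(1))=1$, contradicting $h(2)\geq 2$. Condition~(2) is essentially definitional: the minimality of $j$ combined with the non-decreasing property of $h$ gives $h(i)=h(i+1)=\cdots=h(j-1)$ and $h(j)>h(i)$, while the strict inequality $j-1<h(i)$ follows because $h(i)=j-1$ would yield $h(j-1)=j-1$, and then the defining property of $i$ would force $h(j)=h(h(i)+1)=h(h(i))=h(j-1)=j-1=h(i)$, contradicting $h(j)>h(i)$.

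Condition~(3) is the subtle step and uses the minimality of $i$ a second time. Indices $l\geq i$ are harmless because $h(l)\geq h(i)\geq j>j-2$. For $l<i$, suppose $h(l)=m$ with $i\leq m\leq j-2$; then both $m$ and $m+1$ lie in the flat range $\{i,\ldots,j-1\}$, so condition~(2) yields $h(h(l))=h(m)=h(i)=h(m+1)=h(h(l)+1)$, and moreover $h(h(l))=h(i)\leq h(h(i))<n$. Hence $l$ would be a smaller index with the property defining $i$, a contradiction. Condition~(4) is read off the definition of $b$: by construction $h(h(i))=h(h(i)+1)=\cdots=h(h(i)+b)$, and the upper constraint $h(i)+l\leq h(j)$ gives $b\leq h(j)-h(i)$.

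The only place where the argument is not pure book-keeping is condition~(3): the main point is to recognise that any offending $l<i$ with $h(l)\in\{i,\ldots,j-2\}$ automatically inherits the defining property of $i$ from the flatness of $h$ on $\{i,\ldots,j-1\}$ established in condition~(2), and is therefore excluded by minimality. The remaining three conditions follow by direct unpacking of the definitions of $i$, $j$, and $b$ together with the Hessenberg property.
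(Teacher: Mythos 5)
Your proof is correct and takes essentially the same route as the paper: the paper likewise treats conditions (1), (2) and (4) as routine and devotes its argument to condition (3), excluding any $l$ with $h(l)\in\{i,\ldots,j-2\}$ by observing that flatness of $h$ on $\{i,\ldots,j-1\}$ would make $l$ a smaller witness to the defining property of $i$. You simply spell out the bookkeeping steps (well-definedness of $j$ and $b$, and the check $h(h(l))<n$) that the paper leaves implicit.
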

\begin{proof}
The only condition that is not straightforward to check is condition \eqref{item:cond3} in Proposition \ref{prop:relations}. If there exists $l$ such that $h(l)\in \{i,\ldots, j-2\}$, then $h(l)+1\in \{i,\ldots, j-1\}$, which means that $h(h(l)+1)=h(h(l))$ which condradicts the minimality of $i$. See figure \ref{fig:condnalig}.
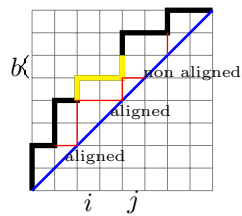
\begin{figure}[htb]  \centering
\begin{tikzpicture}
\begin{scope}[scale=0.3]
\draw[help lines] (0,0) grid +(8,8);
\dyckpath{0,0}{1,1,0,1,1,0,1,0,0,1,1,0,0,1,0,0}
\dyckpathc{2,4}{1,0,0,1}{yellow}
\gline{0,0}{8,8}{blue}{1}
\gline{1,2}{2,2}{red}{0.5}
\gline{2,4}{2,2}{red}{0.5}
\gline{2,4}{4,4}{red}{0.5}
\gline{4,5}{4,4}{red}{0.5}
\gline{4,5}{5,5}{red}{0.5}
\gline{6,7}{6,6}{red}{0.5}
\node at (2.8,1.5) {\tiny aligned};
\node at (4.8,3.5) {\tiny aligned};
\node at (7.1,5.2) {\tiny non aligned};
\node at (2.5,-0.5) {$i$};
\node at (4.5,-0.5) {$j$};
\node at (-0.7,5.5) {$b$};
\draw [decorate,decoration={brace,amplitude=3pt},xshift=-4pt,yshift=0pt] (0,5) -- (0,6);
\end{scope}
\end{tikzpicture}
\caption{The smallest $i$ such that $h(h(i)+1)=h(h(i))$.}
\label{fig:condnalig}
\end{figure}

\end{proof}
\begin{Prop}
\label{prop:ijk2}
Let $h\in \D_n$ be an aligned irreducible non-complete Hessenberg function. Let $1\leq i< j\leq n$ be the integers such that $h(i-1)<h(i)=h(i+1)=\ldots h(j-1)<h(j)=n$ (here define $h(0)=0$). Define $b:=n-h(i)$. Then $h$ and $i,j,b$ satisfy the conditions in Proposition \ref{prop:relations}.
\end{Prop}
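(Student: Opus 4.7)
The plan is to verify the four conditions of Proposition \ref{prop:relations} for the given $i,j,b$. Let $m := h(i) = h(i+1) = \cdots = h(j-1)$; by hypothesis $m < h(j) = n$ and $b = n - m \ge 1$. Condition \eqref{item:cond1} is essentially built into the definition of $i$: if $i > 1$ the jump $h(i-1) < h(i)$ is part of the data, and if $i = 1$ the requirement $h(1) > 1$ follows from irreducibility (which gives $h(l) > l$ for $1 \le l < n$, and $n \ge 2$ since $h$ is non-complete). Condition \eqref{item:cond2} again uses irreducibility for the single non-tautological part, the strict inequality $j - 1 < m$: indeed $m = h(j-1) \ge j > j-1$.

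The first genuine argument is condition \eqref{item:cond3}, that $h^{-1}(\{i, \ldots, j-2\}) = \emptyset$, and this is where the \emph{aligned} hypothesis enters. Suppose for contradiction that some $l$ has $h(l) \in \{i, \ldots, j-2\}$. Then both $h(l)$ and $h(l) + 1$ lie in $\{i, \ldots, j-1\}$, which is exactly the range on which $h$ is constantly $m$. Hence $h(h(l)) = h(h(l)+1) = m$, and since $m < n$ this contradicts the aligned condition applied to index $l$.

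Condition \eqref{item:cond4} asks for some $1 \le b \le n - m$ with $h(m) = h(m+1) = \cdots = h(m+b)$; I claim $b = n - m$ works, i.e., that $h(l) = n$ for every $m \le l \le n$. The crux is the inequality $m \ge j$: if instead $m < j$, then $h(m) \le h(j-1) = m$, forcing $h(m) = m$ with $m < n$, which contradicts irreducibility. Given $m \ge j$, monotonicity of $h$ yields $h(m) \ge h(j) = n$, so $h(m) = n$ and hence $h(l) = n$ for all $m \le l \le n$, completing condition \eqref{item:cond4}. The main obstacle is the careful identification of where each hypothesis enters: \emph{aligned} is used exclusively in step \eqref{item:cond3}, while \emph{irreducibility} is invoked in conditions \eqref{item:cond1}, \eqref{item:cond2}, and for the key inequality $m \ge j$ in condition \eqref{item:cond4}.
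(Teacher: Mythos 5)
Your proof is correct and takes essentially the same route as the paper: the paper likewise isolates condition (3) as the only nontrivial point and derives the same contradiction with alignedness from $h(l)\in\{i,\ldots,j-2\}$ forcing $h(h(l))=h(h(l)+1)=h(i)<n$. Your explicit verification of the remaining conditions (which the paper dismisses as straightforward) is accurate, including the use of irreducibility to get $h(i)\geq j$ and hence $h(h(i))=n$.
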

\begin{proof}
 The only condition that is not straightforward to check is condition \eqref{item:cond3} in Proposition \ref{prop:relations}. If there exists $l$ such that $h(l)\in \{i,\ldots, j-2\}$, then $h(l)+1\in \{i,\ldots, j-1\}$, which means that $h(h(l)+1)=h(h(l))<n$ which contradicts the fact that $h$ is aligned. See figure \ref{fig:condalig}.
\begin{figure}[htb]  \centering
\begin{tikzpicture}
\begin{scope}[scale=0.3]
\draw[help lines] (0,0) grid +(8,8);
\dyckpath{0,0}{1,1,0,1,1,1,0,1,0,0,0,1,1,0,0,0}
\dyckpathc{2,5}{1,0,0,0,1,1}{yellow}
\gline{0,0}{8,8}{blue}{1}
\gline{1,2}{2,2}{red}{0.5}
\gline{2,5}{2,2}{red}{0.5}
\gline{2,5}{5,5}{red}{0.5}
\gline{5,6}{5,5}{red}{0.5}
\node at (2.5,-0.5) {$i$};
\node at (5.5,-0.5) {$j$};
\node at (-0.7,7) {$b$};
\draw [decorate,decoration={brace,amplitude=3pt},xshift=-4pt,yshift=0pt] (0,6) -- (0,8);
\end{scope}
\end{tikzpicture}
\caption{Depiction of $i,j,b$ when $h$ is aligned.}
\label{fig:condalig}
\end{figure}
\end{proof}

\begin{Alg}
\label{alg:alg}
We can now define the following algorithm for computing $f$ in terms of $f(k_{n_1}\cdot\ldots\cdot k_{n_m})$.
\begin{enumerate}
    \item\label{item:step1} If $h=k_{n_1}\cdot\ldots\cdot k_{n_m}$ for some positive integers $n_1,\ldots, n_m$, then return $f(h)$. Else, go to step \eqref{item:step2}.
    \item\label{item:step2} If $h$ is non-aligned, choose $i,j,b$ as in Proposition \ref{prop:ijk1} and use Proposition \ref{prop:relations}. Return to step \eqref{item:step1} with $h_0$ and $h_2$. Else, go to step \eqref{item:step3}.
    \item\label{item:step3} If $h$ is aligned, choose an irreducible component of $h$ which is not complete. Then choose $i,j,b$ as in Proposition \ref{prop:ijk2} for this component and use Proposition \ref{prop:relations}. Return to step \eqref{item:step1} with $h_0$ and $h_2$.
\end{enumerate}
If $f$ is multiplicative, that is if $f(h_1\cdot h_2)=f(h_1)f(h_2)$, then we can compute $f$ in terms of $f(k_n)$.
\end{Alg}
\begin{Exa}
In Figure \ref{fig:alg} we show the steps of Algorithm \ref{alg:alg} for the Hessenberg function $h=(2,4,4,5,5)$.
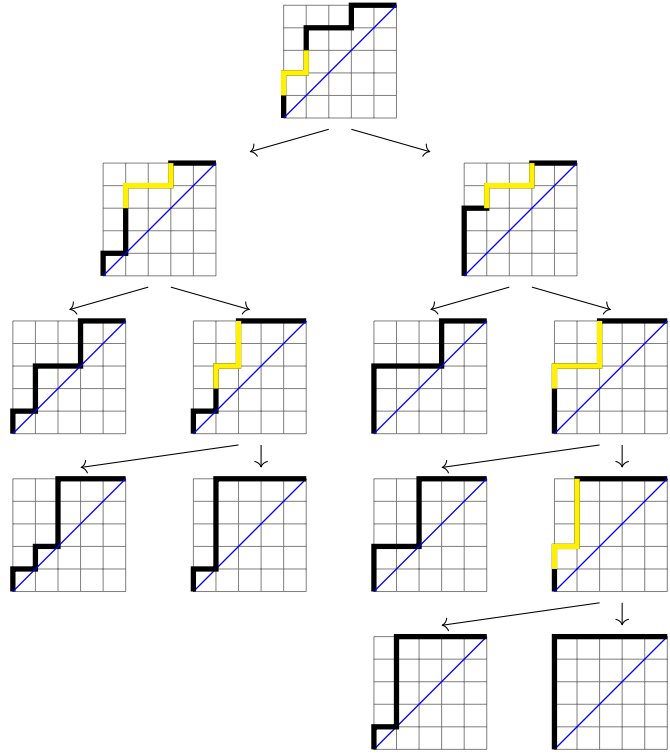
\begin{figure}[htb]  \centering
    \begin{tikzpicture}
    \begin{scope}[scale=0.5]
       \begin{scope}[scale=0.6,shift={(-8,0)}]
       \draw[help lines] (0,0) grid +(5,5);
       \dyckpath{0,0}{1,1,0,1,1,0,0,1,0,0}
       \dyckpathc{0,1}{1,0,1}{yellow}
       \gline{0,0}{5,5}{blue}{0.5}
       \end{scope}
       \begin{scope}[scale=0.6,shift={(-6,-0.5)}]
       \draw[->] (0,0) -- (-3.5,-1);
       \draw[->] (1,0) -- (4.5,-1);
       \end{scope}
       \begin{scope}[scale=0.6,shift={(-8,-7)}]
       \begin{scope}[shift={(-8,0)}]
       \draw[help lines] (0,0) grid +(5,5);
       \dyckpath{0,0}{1,0,1,1,1,0,0,1,0,0}
       \gline{0,0}{5,5}{blue}{0.5}
       \dyckpathc{1,3}{1,0,0,1}{yellow}
       \end{scope}
       \begin{scope}[shift={(8,0)}]
       \draw[help lines] (0,0) grid +(5,5);
       \dyckpath{0,0}{1,1,1,0,1,0,0,1,0,0}
       \gline{0,0}{5,5}{blue}{0.5}
       \dyckpathc{1,3}{1,0,0,1}{yellow}
       \end{scope}
       \end{scope}
       \begin{scope}[scale=0.6,shift={(2,-7.5)}]
       \draw[->] (0,0) -- (-3.5,-1);
       \draw[->] (1,0) -- (4.5,-1);
       \end{scope}
       \begin{scope}[scale=0.6,shift={(-20,-14)}]
       \begin{scope}
       \draw[help lines] (0,0) grid +(5,5);
       \dyckpath{0,0}{1,0,1,1,0,0,1,1,0,0}
       \gline{0,0}{5,5}{blue}{0.5}
       \end{scope}
       \begin{scope}[shift={(8,0)}]
       \draw[help lines] (0,0) grid +(5,5);
       \dyckpath{0,0}{1,0,1,1,0,1,1,0,0,0}
       \gline{0,0}{5,5}{blue}{0.5}
       \dyckpathc{1,2}{1,0,1,1}{yellow}
       \end{scope}
       \end{scope}
       \begin{scope}[scale=0.6,shift={(-14,-7.5)}]
       \draw[->] (0,0) -- (-3.5,-1);
       \draw[->] (1,0) -- (4.5,-1);
       \end{scope}
       \begin{scope}[scale=0.6,shift={(-4,-14)}]
       \begin{scope}
       \draw[help lines] (0,0) grid +(5,5);
       \dyckpath{0,0}{1,1,1,0,0,0,1,1,0,0}
       \gline{0,0}{5,5}{blue}{0.5}
       \end{scope}
       \begin{scope}[shift={(8,0)}]
       \draw[help lines] (0,0) grid +(5,5);
       \dyckpath{0,0}{1,1,1,0,0,1,1,0,0,0}
       \gline{0,0}{5,5}{blue}{0.5}
       \dyckpathc{0,2}{1,0,0,1,1}{yellow}
       \end{scope}
       \end{scope}
       \begin{scope}[scale=0.6,shift={(6,-14.5)}]
       \draw[->] (0,0) -- (-7,-1);
       \draw[->] (1,0) -- (1,-1);
       \end{scope}
       \begin{scope}[scale=0.6,shift={(-4,-21)}]
       \begin{scope}
       \draw[help lines] (0,0) grid +(5,5);
       \dyckpath{0,0}{1,1,0,0,1,1,1,0,0,0}
       \gline{0,0}{5,5}{blue}{0.5}
       \end{scope}
       \begin{scope}[shift={(8,0)}]
       \draw[help lines] (0,0) grid +(5,5);
       \dyckpath{0,0}{1,1,0,1,1,1,0,0,0,0}
       \gline{0,0}{5,5}{blue}{0.5}
       \dyckpathc{0,1}{1,0,1,1,1}{yellow}
       \end{scope}
       \end{scope}
       \begin{scope}[scale=0.6,shift={(6,-21.5)}]
       \draw[->] (0,0) -- (-7,-1);
       \draw[->] (1,0) -- (1,-1);
       \end{scope}
       \begin{scope}[scale=0.6,shift={(-20,-21)}]
       \begin{scope}
       \draw[help lines] (0,0) grid +(5,5);
       \dyckpath{0,0}{1,0,1,0,1,1,1,0,0,0}
       \gline{0,0}{5,5}{blue}{0.5}
       \end{scope}
       \begin{scope}[shift={(8,0)}]
       \draw[help lines] (0,0) grid +(5,5);
       \gline{0,0}{5,5}{blue}{0.5}
       \dyckpath{0,0}{1,0,1,1,1,1,0,0,0,0}
       \end{scope}
       \end{scope}
       \begin{scope}[scale=0.6,shift={(-10,-14.5)}]
       \draw[->] (0,0) -- (-7,-1);
       \draw[->] (1,0) -- (1,-1);
       \end{scope}
       \begin{scope}[scale=0.6,shift={(-4,-28)}]
       \begin{scope}
       \draw[help lines] (0,0) grid +(5,5);
       \gline{0,0}{5,5}{blue}{0.5}
       \dyckpath{0,0}{1,0,1,1,1,1,0,0,0,0}
       \end{scope}
       \begin{scope}[shift={(8,0)}]
       \draw[help lines] (0,0) grid +(5,5);
       \gline{0,0}{5,5}{blue}{0.5}
       \dyckpath{0,0}{1,1,1,1,1,0,0,0,0,0}
       \end{scope}
       \end{scope}
       \end{scope}
    \end{tikzpicture}
    \caption{Running Algorithm \ref{alg:alg} for $h=(2,4,4,5,5)$.}
    \label{fig:alg}
    \end{figure}
\end{Exa}

\begin{Rem}
  If we define $\widetilde{f}(h)=\frac{f(h)}{q^{\ell(h)/2}}$, where $\ell(h)=\sum_{i=1}^n h(i)-i$, then Equation \eqref{eq:rec} has the following, more symmetric, form
  \[
  (q^{-\frac{1}{2}}+q^{\frac{1}{2}})\widetilde{f}(h_1)=q^{-\frac{1}{2}}\widetilde{f}(h_0)+q^{\frac{1}{2}}\widetilde{f}(h_2).
  \]
  Likewise, one can derive symmetric forms for the equations in Propositions \ref{prop:basicrel}, \ref{prop:basicreldual} and \ref{prop:relations}. For instance, Equation \eqref{eq:proprelations} in Proposition \eqref{prop:relations} will read
  \[
  \llbracket b+1\rrbracket_q\widetilde{f}(h_1)=\llbracket j-i\rrbracket_q\widetilde{f}(h_2)+\llbracket b+1-j+i\rrbracket_q\widetilde{f}(h_0)
  \]
  where
  \[
  \llbracket a\rrbracket_q=\frac{q^{\frac{a}{2}}-q^{-\frac{a}{2}}}{q^{\frac{1}{2}}-q^{-\frac{1}{2}}}.
  \]
  When $\A=A(q)$ for some $\mathbb{Q}$-algebra $A$, we say that $g\in A(q)$ is \emph{palindromic with center of symmetry $k$} if
  \[
  q^{2k}g(q^{-1})=g(q).
  \]
  By the discussion above,  when $f(k_{n_1}\cdot\ldots \cdot k_{n_m})$ is palindromic with center of symmetry 
  \[
  \frac{1}{2}\sum_{j=1}^m \frac{n_j(n_j-1)}{2}
  \]
  for every sequence of positive integers $(n_1,\ldots, n_m)$, Algorithm \ref{alg:alg} proves that  $f(h)$ is palindromic with center of symmetry $\frac{\ell(h)}{2}$ for every $h\in \D$. For example, this holds true for the chromatic quasisymmetric function.\par
 \end{Rem} 
  
\begin{Rem}
When $h$ is abelian, that is, if $h(h(1)+1)=n$, one can compute $f(h)$ from the modular law using the algorithm in \cite[Proposition 29]{Alexandersson_2020}.\par
\end{Rem}
\begin{Rem}
\label{rem:network}
If $h$ is abelian then $h$ is aligned and all subsequent Hessenberg functions appearing in Algorithm \ref{alg:alg} will be aligned as well. This means that no steps of type \eqref{item:step2} will be required. In this case the algorithm can be represented graphically by a planar network as described below. \par
  First, we need a definition. We let $a_{i,j}:=\min\{k\geq 0; h(i-k)<j\}$ if $h(1)<j$ and $a_{i,j}=i$ otherwise. This network can be visualized in the plane lattice. It has starting point $(i,h(i))$, where $i:=\max\{j,h(j)<n\}$ (or $(0,n)$ when $h(1)=n$). It has steps $(0,-1)$ and $(-1,-1)$ and end points $(j,j)$ with $j\geq0$.\par
   Each step $(i,j)\to (i-1,j-1)$ has weight $\frac{[a_{i,j}]_q}{[n-j+1]_q}$, while each step $(i,j)\to (i,j-1)$ has weight $1-\frac{[a_{i,j}]_q}{[n-j+1]_q}$. Finally, each end point $(j,j)$ has weight $f(k_j\cdot k_{n-j})$. The network is depicted in Figure \ref{fig:network}.
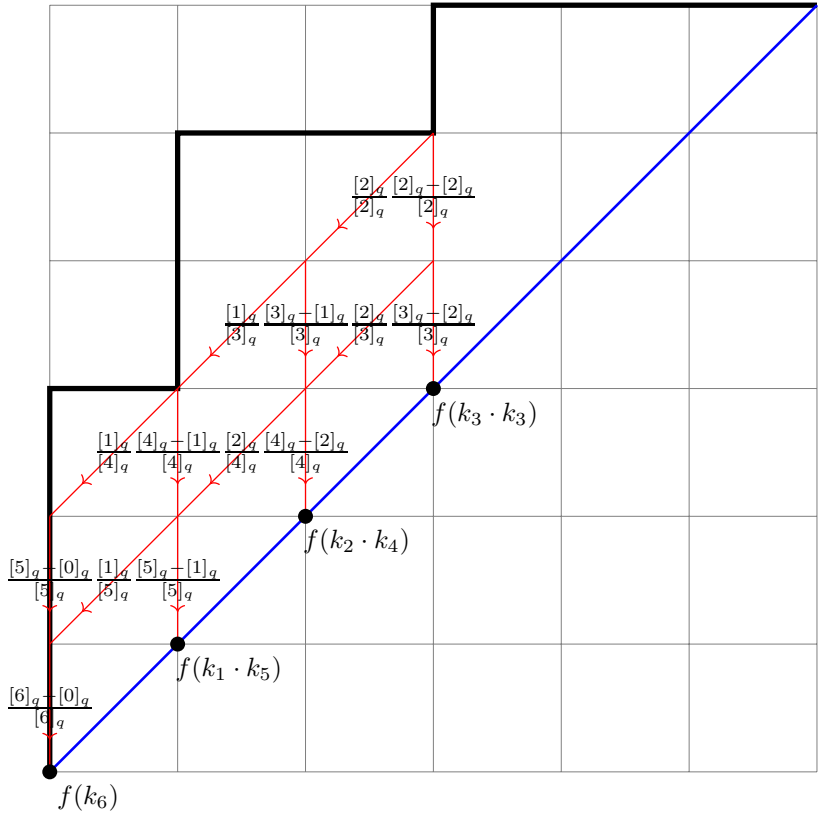
\begin{figure}[htb]  \centering
\begin{tikzpicture}
\begin{scope}[scale=1.7, decoration={
    markings,
    mark=at position 0.75 with {\arrow{>}}}
    ]
\draw[help lines] (0,0) grid +(6,6);
\dyckpath{0,0}{1,1,1,0,1,1,0,0,1,0,0,0}
\glinear{3,5}{red}{0.5}{$\frac{[2]_q-[2]_q}{[2]_q}$}
\glinearr{3,5}{red}{0.5}{$\frac{[2]_q}{[2]_q}$}
\glinear{3,4}{red}{0.5}{$\frac{[3]_q-[2]_q}{[3]_q}$}
\glinearr{3,4}{red}{0.5}{$\frac{[2]_q}{[3]_q}$}
\glinear{2,4}{red}{0.5}{$\frac{[3]_q-[1]_q}{[3]_q}$}
\glinearr{2,4}{red}{0.5}{$\frac{[1]_q}{[3]_q}$}
\glinear{2,3}{red}{0.5}{$\frac{[4]_q-[2]_q}{[4]_q}$}
\glinearr{2,3}{red}{0.5}{$\frac{[2]_q}{[4]_q}$}
\glinear{1,3}{red}{0.5}{$\frac{[4]_q-[1]_q}{[4]_q}$}
\glinearr{1,3}{red}{0.5}{$\frac{[1]_q}{[4]_q}$}
\glinear{1,2}{red}{0.5}{$\frac{[5]_q-[1]_q}{[5]_q}$}
\glinearr{1,2}{red}{0.5}{$\frac{[1]_q}{[5]_q}$}
\glinear{0,1}{red}{0.5}{$\frac{[6]_q-[0]_q}{[6]_q}$}
\glinear{0,2}{red}{0.5}{$\frac{[5]_q-[0]_q}{[5]_q}$}
\gline{0,0}{6,6}{blue}{1}
\node at  (3,3) [shape=circle, fill=black, inner sep=2pt] {};
\node at (3.4,2.8) {$f(k_3\cdot k_3)$};
\node at  (2,2) [shape=circle, fill=black, inner sep=2pt] {};
\node at (2.4,1.8) {$f(k_2\cdot k_4)$};
\node at  (1,1) [shape=circle, fill=black, inner sep=2pt] {};
\node at (1.4,0.8) {$f(k_1\cdot k_5)$};
\node at  (0,0) [shape=circle, fill=black, inner sep=2pt] {};
\node at (0.3,-0.2) {$f(k_6)$};
\end{scope}
\end{tikzpicture}
\caption{The planar network for the Hessenberg function $h=(3,5,5,6,6,6)$.}
\label{fig:network}
\end{figure}
\end{Rem}

\begin{Rem}
\label{rem:networkpos}
There is one case where this network is manifestly ``positive'' (in the sense that the numerators of the weights are $q$-polynomials with non-negative coefficients). This happens when the largest clique contained in the indifference graph associated to $h$ contains the vertex $n$. More precisely, let $j_0:=\min\{j;h(j)=n\}$ and assume that $n-j_0\geq h(i)-i$ for every $i \in [n]$. Then for $i<j_0$ the following inequality holds
\[
h(i-n+j-1)-i+n-j+1\leq n-j_0
\]
and hence
\[
h(i-n+j-1)\leq j+i-1-j_0<j.
\]
This implies that $a_{i,j}\leq n-j+1$ and then all weights appearing in the network for $h$ are positive.
\end{Rem}

\begin{Thm}
\label{thm:algterm}
Algorithm \ref{alg:alg} terminates.
\end{Thm}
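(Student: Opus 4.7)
The plan is to exhibit a well-founded monovariant $M: \D \to P$ on Hessenberg functions that strictly decreases at every non-terminal step of Algorithm \ref{alg:alg}. Since each $\D_n$ is finite, this immediately gives termination.

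I would try the lex-ordered tuple
\[
M(h) := \bigl(\mu(h),\, h(i_1), h(i_2), \ldots, h(i_{\mu(h)}),\, \nu(h)\bigr),
\]
where $\mu(h)$ counts misaligned positions (those $\ell$ with $h(h(\ell)+1) = h(h(\ell)) < n$), the $i_1 < \cdots < i_{\mu(h)}$ enumerate these positions, and $\nu(h) := |\{\ell : h(h(\ell)) > h(\ell)\}|$ measures how far $h$ is from being a product of complete Hessenberg functions. Observe that $M(h)$ attains its minimum (first and last coordinates zero, middle tuple empty) precisely when $h = k_{n_1} \cdot \ldots \cdot k_{n_m}$, i.e., when the algorithm returns in step \eqref{item:step1}.

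For step \eqref{item:step2}, with $i$ the smallest misaligned index as in Proposition \ref{prop:ijk1}, I expect that the push $h_2(j-1) = h(j-1) + b$ with maximal $b$ always resolves the misalignment at $i$ (since $b$ is the full shelf length above $v := h(i)$, forcing $h_2(h_2(i)+1) > h_2(h_2(i))$ or $h_2(h_2(i)) = n$), so $\mu(h_2) < \mu(h)$. For $h_0$, either the misalignment at $i$ also resolves, or it persists but with the strictly smaller value $h_0(i) = h(i) - 1$; in the latter case the middle tuple of $M$ strictly decreases lexicographically. In either case $M(h_0), M(h_2) < M(h)$.

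For step \eqref{item:step3}, applied to an aligned $h$ (so $\mu(h) = 0$), the key claims are that the construction in Proposition \ref{prop:ijk2} preserves alignment (hence $\mu(h_0) = \mu(h_2) = 0$ and the middle tuple stays empty) and that $\nu$ strictly decreases. The latter follows because the $j - i$ shelf positions $\{i, \ldots, j-1\}$ all satisfy $h(h(\ell)) = n > v = h(\ell)$ in $h$ and thus contribute to $\nu(h)$, but their contributions vanish in both $h_0$ (where $h_0(h_0(\ell)) = h_0(\ell)$) and $h_2$ (where $h(j-1)$ jumps to $n$ and the remaining shelf positions drop by $1$).

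The main obstacle will be the careful combinatorial case analysis needed to exclude the creation of new misalignments at positions outside the modified interval $\{i, \ldots, j-1\}$, and to verify alignment preservation in step \eqref{item:step3}. The essential tool is condition \eqref{item:cond3} of Proposition \ref{prop:relations}, which forbids $\{i, \ldots, j-2\}$ from lying in the image of $h$; combined with the minimality of $i$ (in step \eqref{item:step2}) and the aligned irreducible structure (in step \eqref{item:step3}), this rules out configurations where modifications at $\{i, \ldots, j-1\}$ would propagate to create new misalignments elsewhere. Once these checks are completed, the strict decrease of $M$ in the well-founded lex order yields termination of the algorithm.
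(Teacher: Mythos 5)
There is a genuine gap: the leading coordinate $\mu(h)$ of your proposed potential is not monotone under the algorithm, so $M$ does not decrease in lex order. Concretely, take $n=8$ and $h=(4,4,4,5,6,7,7,8)$. The only misaligned position is $\ell=5$ (indeed $h(h(5))=h(6)=7=h(7)=h(h(5)+1)<8$), so $\mu(h)=1$, and step \eqref{item:step2} selects $i=5$, $j=6$, $b=1$ as in Proposition \ref{prop:ijk1}. Then $h_0=(4,4,4,5,5,7,7,8)$ satisfies $h_0(4)=h_0(5)=5$, so each of $\ell=1,2,3$ (all with $h_0(\ell)=4$) now has $h_0(h_0(\ell)+1)=h_0(h_0(\ell))=5<8$ and is misaligned; hence $\mu(h_0)=3>\mu(h)$ and $M(h_0)>M(h)$. (The same example refutes $\mu(h_2)<\mu(h)$: here $h_2=(4,4,4,5,7,7,7,8)$ acquires the new misaligned position $\ell=4$, so $\mu(h_2)=\mu(h)=1$, though in this instance your middle tuple still happens to decrease.) The obstacle you flagged --- creation of new misalignments outside $\{i,\dots,j-1\}$ --- is therefore not a technicality that careful case analysis will remove: whenever $h(i-1)=h(i)-1$, resolving the $j-i$ misalignments on $\{i,\dots,j-1\}$ creates $|h^{-1}(i-1)|$ new ones, and this number can strictly exceed $j-i$.

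The paper does not look for a global monovariant at all. It argues by contradiction: if some Hessenberg function recurred, there would be a cycle $g_1=g,\dots,g_N=g$ of steps; letting $M$ be the maximum height attained by the steps of the cycle and $a_l$ the first column of $g_l$ reaching height $M$, one shows that $a_l$ is non-increasing hence constant equal to some $a$, that every step with $j_l=a$ has height $M$ and must be a $0$-step, and consequently that $g_l(a-1)$ is non-increasing and strictly decreases at least once along the cycle --- contradicting $g_N=g_1$. If you wish to salvage a potential-function argument, the quantity must be tied to this maximal-height structure (e.g.\ tracking, for each height from $n$ downward, how far left the path reaches it) rather than to the count of misaligned positions, which the example above shows can grow.
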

\begin{proof}
We first introduce some notation. For a Hessenberg function $h_1\in \D_n$ we say that the quadruple $(i,j,b,g)$ is a \emph{step} for $h_1$, if $h_1$ and $i,j,b$ satisfy the conditions in Proposition \ref{prop:relations} (with maximum possible $b$) and $g$ is either $h_0$ or $h_2$. Moreover, if $g=h_k$, for $k=0,2$, we say that $(i,j,b,g)$ is a \emph{$k$-step} for $h_1$. The height of a step $(i,j,b,g)$ for $h_1$ is defined as $h_1(i)+b$.

Since the set $\D_{n}$ is finite it is enough to prove that if we start with a Hessenberg function $g$, it never reappears in the steps of the algorithm. Suppose, for contradiction, that there exists $N>0$, positive integers $i_l,j_l, b_l$ for $l=1,\ldots, N-1$, and Hessenberg functions $g_l$ for $l=1,\ldots N$, such that $g_1=g$, $g_N=g$ and $(i_l,j_l,b_l,g_{l+1})$ is a step for $g_l$ for every $l$. Also, let $M$ be the maximum height attained, that is, $M=\max\{ g_l(j_l)+b_l;l\in [N-1]\}$, and define $a_l:=\min\{i; g_l(i)\geq M\}$. See Figure \ref{fig:Ma} below.
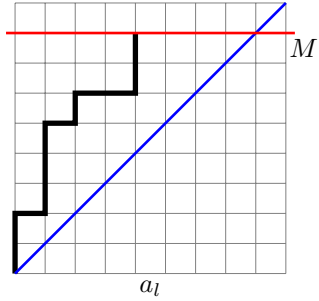
\begin{figure}[htb]  \centering
\begin{tikzpicture}
\begin{scope}[scale=0.4]
\draw[help lines] (0,0) grid +(9,9);
\dyckpath{0,0}{1,1,0,1,1,1,0,1,0,0,1,1}
\gline{0,0}{9,9}{blue}{1}
\gline{-0.3,8}{9.3,8}{red}{1}
\node at (9.6,7.5) {$M$};
\node at (4.5,-0.5) {$a_l$};
\end{scope}
\end{tikzpicture}
\caption{Depiction of the maximum height $M$ and of $a_l$.}
\label{fig:Ma}
\end{figure}

By Proposition \ref{prop:relations} and the fact that $M$ is the maximum height we see that $a_l$ is non increasing. Since $a_1=a_n$, we have that $(a_l)$ is constant and we set $a:=a_1$. The idea now is to prove that $g_l(a-1)$ is non-increasing, which would imply that $g_N(a-1)<g_1(a-1)$, a contradiction.\par



Let $c_l:=\max\{ j< M; g_l(j+1)>g_l(j)\}$. First of all, we note that $c_l$ actually exists, because $g_l(a-1)<M\leq g_l(a)$, in particular $c_l\geq a-1$. Moreover, $c_l$ is independent of $l$, because we have that $g_{l+1}(i)=g_l(i)$ whenever $i\geq a$.


  Since $g_l(c)<g_l(c+1)$, we cannot have that  $g_l(i_l)\leq c<g_l(i_l)+b_l$ (see Condition (4) in Proposition \ref{prop:relations}). This means that either $g_l(i_l)> c$ or $g_l(i_l)+b\leq c$.   In either case, we have that the sets $B_l:=\{i, h_l(i)\leq c\}$ and $C_l:=\{i, c< h_l(i)\}$ satisfy that  $B_{l}\subset B_{l+1}$ and $C_{l+1}\subset C_l$. Then these sequences of sets are also constant sequences and we set $B:=B_1$ and $C:=C_1$.\par

  Since there is at least one step $(i_l,j_l,b_l,g_{l+1})$ of height $M$ and such a step must satisfy $j_l=a$, we must have that $g_l(a-1)>c$, In particular $a-1\in C$, which implies that $h_l(a-1)>c$ for every $l$. Therefore, every step with $j_l=a$ is of height $M$ (recall that in Propositions \ref{prop:ijk1} and \ref{prop:ijk2} we choose $b$ maximal). However, we know that $g_l(a-1)<M$ for every $l$ and that $g_l(a-1)\neq g_{l+1}(a-1)$ only when $j_l=a$. On the other hand every step of height $M$ must be a $0$-step, which means that $g_{l+1}(a-1)<g_l(a-1)$, therefore $g_n(a-1)<g_1(a-1)$ which is a contradiction.
\end{proof}

Finally, to complete the proof of Theorem \ref{thm:main} we need the following proposition.
\begin{Prop}
For every sequence of positive integers $(c_1,\ldots, c_m)$ and permutation $\sigma\in S_m$, we have that
\[
f(k_{c_1}\cdot k_{c_2}\cdot\ldots \cdot k_{c_m})=f(k_{c_{\sigma(1)}}\cdot k_{c_{\sigma(2)}}\cdot\ldots\cdot k_{c_{\sigma(m)}}).
\]
In particular, if $h$ is a Hessenberg function and $h^t$ is its transpose, then $f(h)=f(h^t)$.
\end{Prop}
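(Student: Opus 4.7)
The plan is to split the statement into its two parts and handle them in sequence, deducing the ``In particular'' clause from the permutation symmetry as a corollary.

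For the ``In particular'' conclusion, define $f^t\colon\D\to\A$ by $f^t(h):=f(h^t)$. By the remark right after Definition~\ref{def:modular}, $f^t$ also satisfies the modular law, so by Theorem~\ref{thm:algterm} both $f$ and $f^t$ are determined by their values on ordered products of complete graphs. Since $(k_{c_1}\cdot\ldots\cdot k_{c_m})^t=k_{c_m}\cdot\ldots\cdot k_{c_1}$, the permutation-symmetry claim specialized to the reversal permutation gives $f^t(k_{c_1}\cdots k_{c_m})=f(k_{c_m}\cdots k_{c_1})=f(k_{c_1}\cdots k_{c_m})$, so $f=f^t$ on the generators and therefore on all of $\D$.

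For the permutation-symmetry claim itself, since $S_m$ is generated by adjacent transpositions, it suffices to show $f(P\cdot k_a\cdot k_b\cdot S)=f(P\cdot k_b\cdot k_a\cdot S)$ for all products of complete graphs $P$, $S$. Each relation in Definition~\ref{def:modular} and Propositions~\ref{prop:basicrel},~\ref{prop:basicreldual},~\ref{prop:relations} reads only a contiguous window of the Hessenberg function and produces coefficients independent of the external values (the boundary conditions are automatically compatible when the outside is a product of complete graphs). Consequently, any chain of modular-law identities proving $f(k_a\cdot k_b)=f(k_b\cdot k_a)$ at the level of $[a+b]$ transports verbatim to $P\cdot(-)\cdot S$, and the full claim reduces to the two-factor identity.

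To prove $f(k_a\cdot k_b)=f(k_b\cdot k_a)$ I would induct on $n=a+b$, with the base $n\leq 2$ trivial. The guiding model is the case $a=1,b=2$: for $h=(2,3,3)$, both conditions (1) and (2) of Definition~\ref{def:modular} apply at $i=1$, yielding the pair of identities $(1+q)f(h)=qf(k_1\cdot k_2)+f(k_3)$ and $(1+q)f(h)=qf(k_2\cdot k_1)+f(k_3)$; subtracting gives $f(k_1\cdot k_2)=f(k_2\cdot k_1)$. For general $(a,b)$ the witness is the ``bridge'' Hessenberg function
\[
H=(\underbrace{a,\ldots,a}_{a-1},\,a+1,\,\underbrace{a+b,\ldots,a+b}_{b})
\]
on $[n]$, whose indifference graph is $K_a\sqcup K_b$ together with the single extra edge $\{a,a+1\}$. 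Applying Proposition~\ref{prop:relations} at $(i,j,b')=(a,a+1,b-1)$ to $H$ and to its transpose $H^t$ gives parallel equations $[b]_qf(H)=f(H_2)+q[b-1]_qf(k_a\cdot k_b)$ and $[a]_qf(H^t)=f(\tilde{H}_2)+q[a-1]_qf(k_b\cdot k_a)$, where $H_2$ (resp.\ $\tilde{H}_2$) is the ``$K_a$-joined-$K_{b+1}$'' (resp.\ ``$K_b$-joined-$K_{a+1}$'') Hessenberg function. Iterating Algorithm~\ref{alg:alg} on $H_2$ and on $\tilde{H}_2$ (both aligned and irreducible) expands $f(H_2)$ and $f(\tilde{H}_2)$ as explicit $\mathbb{Q}(q)$-linear combinations of $f(k_n)$ and of the intermediate complete products. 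The outer inductive hypothesis (via the ``In particular'' conclusion at smaller sizes) together with a nested induction on $\min(a,b)$ identifies all non-leading intermediate contributions, leaving an equation of the form $\alpha(q)\bigl(f(k_a\cdot k_b)-f(k_b\cdot k_a)\bigr)=0$ with $\alpha(q)\in\mathbb{Q}(q)^\times$, forcing the desired equality. The main obstacle is the explicit coefficient bookkeeping in the last step: the intermediate products $k_{a-k}\cdot k_{b+k}$ and $k_{b-\ell}\cdot k_{a+\ell}$ produced by iterating the algorithm remain at size $n$, so the outer induction on $n$ does not directly handle them, and a careful telescoping identity of $q$-integers (generalizing the coefficient $-q[n-2]_q$ that appears in the small cases) must be verified to effect the cancellation.
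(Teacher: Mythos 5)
Your outer skeleton matches the paper's: the ``in particular'' clause via $f^t$, the reduction to adjacent transpositions, and the localization to the two-block identity are all fine, and your $n=3$ computation with the two conditions of Definition~\ref{def:modular} is correct. The genuine gap is in the core step $f(k_a\cdot k_b)=f(k_b\cdot k_a)$, and it is the one you flag yourself. Your induction on $n=a+b$ cannot close, because every intermediate term $f(k_{a-k}\cdot k_{b+k})$, $f(k_{b-\ell}\cdot k_{a+\ell})$ produced by Algorithm~\ref{alg:alg} still lives at size $n$. Worse, your two equations come from applying Proposition~\ref{prop:relations} to $H$ and to $H^t$ separately, and these involve $f(H)$ and $f(H^t)$ as independent unknowns: your bridge function $H$ is \emph{not} self-transpose when $a\neq b$, and $f(H)=f(H^t)$ is not available at this stage. (You cannot import it from the area-sequence invariance of abelian Hessenberg functions either, since that invariance is deduced from Corollary~\ref{cor:fcsf}, hence from Theorem~\ref{thm:main}, hence from this very proposition --- the argument would be circular.) So the promised cancellation down to $\alpha(q)\bigl(f(k_a\cdot k_b)-f(k_b\cdot k_a)\bigr)=0$ is not established.

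The paper's proof avoids both problems with a different choice of witness and a different induction variable. Fix $n$ and induct on $a=\min(a,b)$, taking as witness the abelian Hessenberg function $h$ with $h(1)=\cdots=h(a)=n-a$ and $h(i)=n$ for $i>a$. The key point is that this $h$ is \emph{self-transpose}: its complement is the complete bipartite graph between $\{1,\ldots,a\}$ and $\{n-a+1,\ldots,n\}$, which is preserved by $i\mapsto n+1-i$. Running the planar network of Remark~\ref{rem:network} for $f$ and for $f^t$ on the \emph{same} path $h=h^t$ therefore produces two expansions of the same element with the same coefficients,
\[
f(h)=\sum_{i\leq a}c_i\,f(k_i\cdot k_{n-i})=\sum_{i\leq a}c_i\,f(k_{n-i}\cdot k_i),
\]
so no relation between two distinct quantities $f(H)$, $f(H^t)$ ever needs to be justified. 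The terms with $i<a$ cancel by the induction hypothesis (at the same $n$, smaller $a$), and the single surviving coefficient $c_a=\prod_{i=1}^{n-2a}\bigl([a+i]_q-[a]_q\bigr)/[a+i]_q$ is visibly nonzero, which forces $f(k_a\cdot k_{n-a})=f(k_{n-a}\cdot k_a)$. No telescoping identity of $q$-integers is required. If you want to keep your structure, replace your bridge $H$ by this self-transpose witness and switch the induction from $n$ to $a$.
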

\begin{proof}
The second statement follows from the first. Defining $f^t$ as $f^t(h)=f(h^t)$, then $f^t$ also satisfies the modular law. Since
\[
f^t(k_{c_1}\cdot k_{c_2}\cdot\ldots \cdot k_{c_m})=f(k_{c_m}\cdot k_{c_{m-1}}\cdot\ldots \cdot k_{c_1})=f(k_{c_1}\cdot k_{c_2}\cdot\ldots \cdot k_{c_m})
\]
for every sequence of positive integers $(c_1,\ldots, c_m)$, we have that $f^t=f$ by Algorithm \ref{alg:alg} and Theorem \ref{thm:algterm}, which proves that $f(h^t)=f(h)$.\par

To prove the first statement, we begin by showing that $f(k_a\cdot k_{n-a})=f(k_{n-a}\cdot k_a)$ for every $a\in [n]$. Clearly, we can assume that $a\leq n/2$. We fix $n$ and proceed by induction on $a$.  If $a=0$, there is nothing to prove. Otherwise, consider the Hessenberg function $h$ such that $h(1)=h(2)=\ldots=h(a)=n-a$ and $h(a+1)=n$. Since $h$ is abelian we can apply Remark \ref{rem:network} to write $f(h)=\sum_{i\leq a} c_i f(k_i\cdot k_{n-i})$. However, applying the same Remark to $f^t$ and noticing that $h=h^t$, we get that $f(h)=\sum_{i\leq a} c_if(k_{n-i}\cdot k_i)$. Since, by induction hypothesis, we have that $f(k_i\cdot k_{n-i})=f(k_{n-i}\cdot k_i)$ for $i<a$, and since
\[
c_a=\prod_{i=1}^{n-2a} \frac{[a+i]_q-[a]_q}{[a+i]_q}\neq 0,
\]
we get that $f(k_a\cdot k_{n-a})=f(k_{n-a}\cdot k_a)$. \par
  We can actually generalize the argument above, and prove that $f(h_1\cdot k_a\cdot k_{n-a}\cdot h_2)=f(h_1\cdot k_{n-a}\cdot k_a\cdot h_2)$ for every $h_1,h_2\in \D$. Since every permutation is a product of simple transpositions we have the stated result.
\end{proof}

We finish this section with some remarks. First, if we work with the $q$-Weyl algebra and substitute $\partial$ with the $q$-derivation
\[
\partial_q f:=\frac{f(x)-f(xq)}{x(1-q)}
\]
a straightforward computation gives the $q$-analogues of Equations \eqref{eq:basicpartial} and \eqref{eq:relpartial} (although we have to substitute $q$ with $q^{-1}$).
\begin{align*}
    (1+q^{-1})x\partial_qx=&q^{-1}\partial_qx^2+x^2\partial_q\\
    [b+1]_{q^{-1}}x\partial_q^lx^b=&[l]_{q^{-1}}\partial_q^{l-1}x^{b+1}\partial_q+([b+1]_{q^{-1}}-[l]_{q^{-1}})\partial_q^lx^{b+1}.
\end{align*}

Second, Proposition \ref{prop:relations} has a more general form, which originates from the following $q$-Chu-Vandermonde equality
\[
{{a+b}\choose a}_{q^{-1}}x^a\partial_q^lx^b=\sum_{j=0}^a q^{-j(b-l+j)}{{a+b-l} \choose {a-j}}_{q^{-1}}{l\choose j}_{q^{-1}}\partial_q^{l-j}x^{a+b}\partial_q^{j}.
\]
Since we did not need this general form, we merely state it and leave it as an exercise to the avid reader.
\begin{Prop}
Let $h\in \D_n$ be a Hessenberg function and $1\leq i<j\leq n$ and $a\geq 1$ be integers such that
\begin{enumerate}
    \item either $h(i-1)+a\leq h(i)$, or $i=1$ and $h(1)>a$.
    \item $j-1<h(i)=h(i+1)=\ldots=h(j-1)<h(j)$.
    \item $h^{-1}(\{i,\ldots, j-2\})=\emptyset$.
    \item There exists $1\leq b\leq h(j)-h(i)$ such that
    \[
    h(h(i)-a+1)=h(h(i)-a+2)=\ldots=h(h(i))=h(h(i)+1)=\ldots=h(h(i)+b).
    \]
\end{enumerate}
If
\[
h_k(l):=\begin{cases}
h(l)-a &\text{ if }l\in \{i,\ldots, j-1-k\}\\
h(l)+b & \text{ if }l\in\{j-1-k,\ldots, j-1\}\\
h(l)&\text{ otherwise}
\end{cases}
\]
then
\begin{equation}
\label{eq:cv}
{a+b\choose a}_qf(h)=\sum_{j=0}^a q^{j(b-l+j)}{a+b-l\choose a- j}_q{l\choose j}_q f(h_j).
\end{equation}
\end{Prop}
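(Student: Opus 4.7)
The plan is to derive the stated identity as the Hessenberg-function shadow of the displayed $q$-Chu--Vandermonde relation in the $q$-Weyl algebra, following the same template by which Proposition \ref{prop:relations} lifts the simpler operator identity $(b+1)x\partial^l x^b = l\partial^{l-1}x^{b+1}\partial + (b+1-l)\partial^l x^{b+1}$.

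First I would verify the operator identity
\[
{a+b\choose a}_{q^{-1}}x^a\partial_q^l x^b = \sum_{j=0}^a q^{-j(b-l+j)}{a+b-l\choose a-j}_{q^{-1}}{l\choose j}_{q^{-1}}\partial_q^{l-j}x^{a+b}\partial_q^j
\]
inside the $q$-Weyl algebra. Both sides normal-order to the same linear combination of monomials $\partial_q^p x^m$; equality of coefficients is a routine application of the $q$-binomial theorem for $\partial_q^l x^m$ together with the $q$-Chu--Vandermonde summation.

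Next I would interpret the four hypotheses as saying that the Dyck path of $h$ contains an isolated local pattern $n^a e^l n^b$: condition \eqref{item:cond1} encodes the initial $n^a$ block, condition \eqref{item:cond2} the middle $e^l$ block, condition \eqref{item:cond4} the final $n^b$ block, and condition \eqref{item:cond3} guarantees that no row in $\{i,\ldots,j-2\}$ lies in the image of $h$, so that the local pattern does not interact with the rest of the path. Under these hypotheses each summand $\partial_q^{l-j}x^{a+b}\partial_q^j$ on the right-hand side of the operator identity corresponds to the Dyck path obtained by replacing $n^a e^l n^b$ with $e^{l-j} n^{a+b} e^j$, which is exactly $h_j$.

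Finally I would carry the operator identity over to $f$ by induction on $a$. The base case $a=1$ is Proposition \ref{prop:relations} after one application of the $q$-Pascal identity $[b+1]_q = [l]_q + q^l[b+1-l]_q$. For the inductive step I would apply Proposition \ref{prop:relations} to strip off one north step from the $n^{a+1}$ block; this expresses $f(h)$ as a combination of two $f$-values on Hessenberg functions whose local pattern is $n^a e^l n^{b'}$ for a suitable $b'$, to which the inductive hypothesis applies. The $q$-Chu--Vandermonde recurrence ${a+b\choose a}_q = {a+b-1\choose a-1}_q + q^a{a+b-1\choose a}_q$ then shows that the coefficients reassemble into those of Equation \eqref{eq:cv}. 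The main difficulty is purely combinatorial bookkeeping: checking that the intermediate Hessenberg functions produced at each step still satisfy conditions \eqref{item:cond1}--\eqref{item:cond4} for the next application of the basic relation, which follows from the rigidity imposed by these conditions on the surrounding Dyck path.
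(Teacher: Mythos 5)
First, a point of reference: the paper does not prove this proposition. It states the $q$-Chu--Vandermonde identity in the $q$-Weyl algebra and then explicitly ``leaves it as an exercise to the avid reader,'' so there is no proof of record to compare yours against. Your strategy --- verify the operator identity, read conditions (1)--(4) as isolating a pattern $n^ae^{l}n^b$ in the Dyck path, and lift to $f$ by induction on $a$ with Proposition \ref{prop:relations} as base case and engine --- is surely the intended route, and the skeleton is sound. But what you have written is a plan rather than a proof, and the two steps you defer as ``routine'' are precisely where the literal statement resists.

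Concretely: setting $a=1$ in Equation \eqref{eq:cv} gives $[b+1]_qf(h)=[b+1-l]_qf(h_0)+q^{\,b-l+1}[l]_qf(h_1)$, whereas Proposition \ref{prop:relations}, after the $q$-Pascal step you invoke, reads $[b+1]_qf(h)=q^{l}[b+1-l]_qf(h_0)+[l]_qf(h_2)$. These are not the same identity, so the base case is \emph{not} Proposition \ref{prop:relations} as stated; you must first repair the exponent. Tracking the substitution $q\mapsto q^{-1}$ together with ${n\choose k}_{q^{-1}}=q^{-k(n-k)}{n\choose k}_q$ shows the correct exponent in \eqref{eq:cv} is $(a-j)(l-j)$ rather than $j(b-l+j)$; with that correction the $a=1$ case does reduce to Proposition \ref{prop:relations}, and your induction has a chance. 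Second, in the inductive step, applying Proposition \ref{prop:relations} to the innermost $ne^{l}n^b$ of $n^{a+1}e^{l}n^b$ produces the patterns $n^{a}e^{l}n^{b+1}$ and $n^{a}e^{l-1}n^{b+1}e$; the latter has $l-1$ east steps and a trailing east step, not ``$n^ae^{l}n^{b'}$ for a suitable $b'$'' as you assert. So the inductive hypothesis must be applied with two different values of $l$, the terms must be re-indexed (the $j$-th term of the first branch matches the $(j-1)$-st of the second), condition (4) must be re-verified for the path carrying the trailing east step, and the coefficient recombination requires the two-variable $q$-Pascal/Vandermonde recurrence, not the single recurrence for ${a+b\choose a}_q$ you cite. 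None of this is fatal --- a corrected version of your induction does close --- but as written the proposal asserts rather than performs the parts of the argument that actually carry the content, and it would, if executed literally, ``prove'' an identity whose $a=1$ case contradicts Proposition \ref{prop:relations}.
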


\section{The chromatic quasisymmetric function}

    We begin by recalling that the chromatic quasisymmetric function does indeed satisfy the modular law.  This is already well known in the literature (see for instance \cite[Proposition 3.1]{GPmodular}, \cite[Theorem 3.4]{Lee}, \cite[Theorem 3.1]{HNY} and \cite[Corollary 20 and Proposition 23]{Alexandersson_2020}). In particular, if we write
    \[
    \csf_q(h)=\sum_{\lambda\vdash n} \csf_{q,\lambda}(h)e_{\lambda}
    \]
    we have that the functions $\csf_{q,\lambda}\col D\to \mathbb{Q}(q)$ also satisfy the modular law. The following result proves that every other function $f\col \D\to \mathbb{Q}(q)$ is actually a $\mathbb{Q}(q)$-linear combination of $\csf_{q,\lambda}$.\par
       Let $V_n$ be the space of functions $f\col \D_n\to \mathbb{Q}(q)$ that satisfy the modular law.

    \begin{Thm}
    The space $V_n$ has basis $\{\csf_{q,\lambda}\}_{\lambda\vdash n}$.
    \end{Thm}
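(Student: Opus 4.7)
The plan is to establish that $\{\csf_{q,\lambda}\}_{\lambda \vdash n}$ is a basis by separately bounding $\dim V_n$ from above by $p(n)$ (the number of partitions of $n$) using Theorem \ref{thm:main}, and from below by $p(n)$ via a direct linear independence argument using evaluations at disjoint unions of complete graphs.

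For the upper bound, I invoke Theorem \ref{thm:main} directly: any $f \in V_n$ satisfies the modular law, so by Algorithm \ref{alg:alg} together with Theorem \ref{thm:algterm}, $f$ is completely determined by the values $f(k_{n_1} \cdot k_{n_2} \cdots k_{n_m})$, and by the final Proposition of Section 2 these values depend only on the multiset $\{n_1, \ldots, n_m\}$, i.e., on the partition $\lambda = (n_1, n_2, \ldots, n_m) \vdash n$. Hence the evaluation map
\[
V_n \longrightarrow \mathbb{Q}(q)^{\{\lambda \vdash n\}}, \qquad f \longmapsto \bigl(f(k_\lambda)\bigr)_{\lambda \vdash n}
\]
is injective, where $k_\lambda := k_{\lambda_1} \cdot k_{\lambda_2} \cdots k_{\lambda_{\ell(\lambda)}}$. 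Therefore $\dim_{\mathbb{Q}(q)} V_n \leq p(n)$.

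For the lower bound, I compute the evaluations $\csf_{q,\lambda}(k_\mu)$ for $\lambda, \mu \vdash n$. Since $\csf_q$ is multiplicative and $\csf_q(K_m) = m!_q\, e_m$, we have
\[
\csf_q(k_\mu) = \prod_{i=1}^{\ell(\mu)} \mu_i!_q \, e_{\mu_i} = \mu!_q\, e_\mu,
\]
where $\mu!_q := \prod_i \mu_i!_q$ and $e_\mu = \prod_i e_{\mu_i}$ is an element of the elementary basis. Expanding in the $e$-basis, this yields
\[
\csf_{q,\lambda}(k_\mu) = \mu!_q \cdot \delta_{\lambda, \mu}.
\]
Thus the matrix $\bigl(\csf_{q,\lambda}(k_\mu)\bigr)_{\lambda, \mu \vdash n}$ is diagonal with nonzero diagonal entries $\mu!_q \in \mathbb{Q}(q)$, hence invertible. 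This forces $\{\csf_{q,\lambda}\}_{\lambda \vdash n}$ to be linearly independent in $V_n$, giving $\dim V_n \geq p(n)$.

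The two bounds together yield $\dim V_n = p(n)$ and exhibit $\{\csf_{q,\lambda}\}_{\lambda \vdash n}$ as a basis. No step here is particularly hard — the entire work has been done in Theorem \ref{thm:main}, which supplies both the upper bound on dimension and the order-independence needed to make the evaluation map well-defined on partitions. The only mild subtlety is confirming that $\csf_{q,\lambda}$ genuinely lies in $V_n$, which follows because $\csf_q$ satisfies the modular law (as recalled at the beginning of Section 3) and the modular law is a $\mathbb{Q}(q)$-linear relation, so it passes coordinatewise to each $e_\lambda$-coefficient.
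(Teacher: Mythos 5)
Your argument is correct and follows the same route as the paper: the upper bound $\dim V_n \le p(n)$ comes from Theorem \ref{thm:main}, and linear independence comes from the evaluations $\csf_{q,\lambda}(k_\mu)=\mu!_q\,\delta_{\lambda,\mu}$, which is exactly the computation in the paper's proof. The extra remarks (well-definedness of the evaluation map on partitions, and that each $\csf_{q,\lambda}$ lies in $V_n$ because the modular law is linear) are accurate but implicit in the paper.
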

    \begin{proof}
    By Theorem \ref{thm:main}, we have that $V_n$ has dimension at most the number of partitions of $n$. On the other hand, we have that $\csf_{q,\lambda}(k_{\mu})=0$ for every $\mu\vdash n$ with $\mu\neq \lambda$ and $\csf_{q,\lambda}(k_{\lambda})=\prod_{i=1}^{\ell(\lambda)} \lambda_i!_q$, which means the $\csf_{q,\lambda}$ are linear independent. This finishes the proof.
    \end{proof}

    In a way, the coefficients of the chromatic quasisymmetric function in the elementary basis are the simplest functions that satisfy the modular law, as the following makes precise.\par
    \begin{Cor}
    \label{cor:fcsf}
    Let $\A$ be a $\mathbb{Q}(t)$-algebra, and $f\col \D\to \A$ be a function satisfying the modular law. Then
    \[
    f(h)=\sum_{\lambda\vdash n}\frac{\csf_{q,\lambda}(h)}{\lambda!_q}f(k_\lambda).
    \]
    \end{Cor}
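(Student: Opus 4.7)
The plan is to show that the right-hand side is the unique $\A$-valued function satisfying the modular law whose values at complete products $k_\mu$ agree with $f$, and that such a function must equal $f$.

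First, I would observe that Algorithm \ref{alg:alg}, combined with Theorem \ref{thm:algterm}, expresses $f(h)$ as a $\mathbb{Q}(q)$-linear combination
\[
f(h) = \sum_{\lambda \vdash n} c_\lambda(h)\, f(k_\lambda),
\]
where the coefficients $c_\lambda(h) \in \mathbb{Q}(q)$ depend only on $h$ and $\lambda$, not on the particular function $f$. This is because each step of the algorithm (Proposition \ref{prop:relations}) is a $\mathbb{Q}(q)$-linear relation coming from the modular law, and the terminating values are precisely the $f(k_\lambda)$; moreover, by the last proposition of the section, reordering the factors in $k_{n_1}\cdot\ldots\cdot k_{n_m}$ does not change the value, so only the partition $\lambda$ matters.

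Next, I would apply this same formula to $\csf_{q,\mu}$, which is a $\mathbb{Q}(q)$-valued function on $\D$ satisfying the modular law. This gives
\[
\csf_{q,\mu}(h) = \sum_{\lambda \vdash n} c_\lambda(h)\, \csf_{q,\mu}(k_\lambda).
\]
As already observed in the proof of the preceding theorem, $\csf_{q,\mu}(k_\lambda) = 0$ whenever $\lambda \neq \mu$, while $\csf_{q,\mu}(k_\mu) = \mu!_q := \prod_i \mu_i!_q$. Hence the sum collapses to $c_\mu(h)\,\mu!_q$, and we solve
\[
c_\mu(h) = \frac{\csf_{q,\mu}(h)}{\mu!_q}.
\]
Substituting this back into the formula for $f(h)$ yields the claim.

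There is no serious obstacle here; the only subtle point is ensuring that the coefficients $c_\lambda(h)$ produced by Algorithm \ref{alg:alg} are genuinely independent of $f$, which is clear since each invocation of Proposition \ref{prop:relations} rewrites the value of an arbitrary modular-law function at one Hessenberg function as a fixed $\mathbb{Q}(q)$-linear combination of its values at two others. Once that is noted, the argument is a one-line application of the "dual basis" property $\csf_{q,\mu}(k_\lambda) = \delta_{\lambda\mu}\,\mu!_q$.
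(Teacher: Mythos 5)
Your argument is correct and is essentially the proof the paper intends (the corollary is left unproved there, but the preceding theorem's proof supplies exactly the ingredients you use): the algorithm makes the coefficients $c_\lambda(h)$ universal, and the dual-basis relation $\csf_{q,\mu}(k_\lambda)=\delta_{\lambda\mu}\,\mu!_q$ pins them down as $\csf_{q,\mu}(h)/\mu!_q$. Equivalently, one can note that the right-hand side satisfies the modular law and agrees with $f$ at all $k_\mu$, so Theorem \ref{thm:main} forces equality; this is the same reasoning packaged differently.
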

   We will now prove that, when $h$ is abelian, Algorithm \ref{alg:alg} is manifestly positive after we change $h$ to a suitable abelian Hessenberg function that has the same chromatic quasisymmetric function. As a consequence, we recover some of the results in \cite{HaradaPrecup} and \cite{Cho}. \par

   For a Hessenberg function $h\in \D_n$ we call the sequence $a_i=h(i)-i$ the \emph{area sequence} of $h$. We note that in order for a sequence of non-negative integers to be an area sequence of a Hessenberg function we must have $a_i+i\leq n$ and $a_{i+1}\geq a_{i}-1$. Given a Hessenberg function $h$, note that the chromatic polynomial of its indifference graph $G$ is given by $\chi_G(x)=\prod_{i=1}^n(x-a_i)$ and as such it does not depend on the order of the elements in the area sequence.  We have a similar result for the chromatic quasisymmetric function.

\begin{Prop}
\label{prop:abelcsf}
Let $h_1$ and $h_2$ be abelian Hessenberg functions in $\D_n$ such that the area sequence of $h_1$ is a permutation of the area sequence of $h_2$. Then $\csf_q(h_1)=\csf_q(h_2)$.
\end{Prop}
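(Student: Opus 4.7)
The plan is to exploit the fact that abelian Hessenberg functions have bipartite complement graphs, which puts them in the scope of the explicit formula for $\csf_q$ stated as a theorem in the introduction. Concretely, writing $p:=h(1)$, the abelian condition $h(p+1)=n$ forces both $\{1,\dots,p\}$ and $\{p+1,\dots,n\}$ to be cliques of the indifference graph of $h$: any two vertices $i<j$ in $\{1,\dots,p\}$ satisfy $h(i)\ge p\ge j$, and any two vertices in $\{p+1,\dots,n\}$ are connected via $h(i)=n$. Hence the complement of the indifference graph is bipartite, with sides $\{1,\dots,p\}$ and $\{p+1,\dots,n\}$.

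Applying the bipartite-complement formula with $\lambda=(n-h(1),\dots,n-h(n))$ one obtains
\[
\csf_q(h)=m!_q\,R_{m,n-m}(\lambda)\,e_{(n-m,m)}+\sum_{j<m}q^j\,j!_q\,[m-2j]_q\,R_{j,n-j-1}(\lambda)\,e_{(n-j,j)}.
\]
The proposition then follows from the classical symmetry of the Garsia--Remmel $q$-hit numbers: $R_{j,k}(\lambda)$ depends only on the multiset $\{\lambda_i+i\}_{i=1}^n$. Since $\lambda_i+i=(n-h(i))+i=n-a_i$, this multiset is determined by the area multiset $\{a_i\}$, so $R_{j,k}$ is unchanged when one passes from $h_1$ to $h_2$. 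Substituting back gives $\csf_q(h_1)=\csf_q(h_2)$.

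The main obstacle is a sequencing issue: if the bipartite-complement formula is proved later in Section~3 than this proposition, it cannot be invoked here, and one must give a direct argument. The fallback plan is as follows. First, verify that every abelian Hessenberg function is aligned, so that Algorithm~\ref{alg:alg} computes $\csf_q(h)$ exclusively through step~\eqref{item:step3} and is modeled by the planar network of Remark~\ref{rem:network}. Second, by induction on $n$, choose at each step an application of Proposition~\ref{prop:relations} whose outputs $h_0$ and $h_2$ are themselves abelian, so that the inductive hypothesis applies, and whose coefficients $[j-i]_q$, $[b+1]_q-[j-i]_q$, and $[b+1]_q$ depend only on the area multiset of $h$. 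Carrying out this bookkeeping — in particular arranging the reduction so that the intermediate area multisets are canonically determined by the area multiset of $h$ — is the delicate part and would occupy most of the argument.
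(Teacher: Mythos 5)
Your primary plan is a genuinely different route from the paper's, and it is essentially workable, but it outsources the real content to two external ingredients. First, the bipartite-complement/$q$-hit-number formula is Theorem \ref{thm:qhit}, which appears \emph{after} this proposition; invoking it here is legitimate only because its proof (via Theorem \ref{thm:main}, Lemmas \ref{lem:Rjm}--\ref{lem:Rjrel} and Remark \ref{rem:network}) does not use the present proposition, but this reordering should be made explicit. Second, the assertion that $R_{j,k}(\lambda)$ depends only on the multiset $\{\lambda_i+i\}$ is the Garsia--Remmel $q$-analogue of rook equivalence (via their factorization theorem $\prod_i[x+b_i-i+1]_q=\sum_k R_{n-k}[x]_q\cdots[x-k+1]_q$ together with the identity expressing $q$-hit numbers in terms of $q$-rook numbers); this is true but is nowhere in the paper and needs a citation plus a small check that equality of the length-$n$ multisets implies equality after restricting to the smaller boards $E_{n-j-1}$, $E_{n-j}$ (it does, since the discarded elements are $\{k+1,\dots,n\}$ in both cases). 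The paper instead stays self-contained: it introduces the $q$-chromatic polynomial $\chi_q(h)=\prod_i\bigl(x-[h(i)-i]_q\bigr)$, checks it satisfies the modular law, and uses the uniqueness in Theorem \ref{thm:main} to identify it with $\alpha\circ\csf_q$ for the specialization $\alpha(e_n)=\prod_{j=0}^{n-1}(x-[j]_q)/n!_q$; since $\chi_q$ manifestly depends only on the area multiset, and $\alpha$ is injective on the span of $\{e_{n-i,i}\}$ (where abelian $\csf_q$ lives, by Shareshian--Wachs), the result follows. Your approach buys an explicit combinatorial reason for the invariance ($q$-rook equivalence) at the cost of heavier external machinery; the paper's buys a short argument using only its own uniqueness theorem. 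Your fallback plan (tracking the algorithm's coefficients by induction) is not carried out and, as written, is only a sketch; the "delicate bookkeeping" you defer is precisely where the work would be, so it cannot be counted as a proof.
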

\begin{proof}
The main idea is that for abelian Hessenberg functions the chromatic symmetric function is uniquely determined by the chromatic polynomial, which in turn is determined by the elements in the area sequence.\par
  Let $\chi_q\col \D\to \mathbb{Q}(t)[x]$ be given by $\chi_q(h)=\prod (x-[h(i)-i]_q)$. Then, $\chi_q$ satisfies the modular law. Indeed, it is enough to notice that
  \[
  (1+q)(x-[a]_q)=q(x-[a-1]_q)+(x-[a+1]_q)
  \]
  for every $a\geq 1$.  \par
  Consider the homomorphism $\alpha\col \Lambda_q\to \mathbb{Q}(q)[x]$ between $\mathbb{Q}(t)$-algebras given by
  \[
  \alpha(e_n)=\frac{\prod_{j=0}^{n-1} (x-[j]_q)}{n!_q}.
  \]
  We can see that
  \[
  \alpha(e_{n-i,i})=\frac{\prod_{j=0}^{n-i-1} (x-[j]_q)}{(n-i)!_q}\frac{\prod_{j=0}^{i-1} (x-[j]_q)}{i!_q},
  \]
  which means that $\{\alpha(e_{n-i,i})\}_{i=0,\ldots, \lceil n/2\rceil}$ is a linearly independent set over $\mathbb{Q}(t)$ (one can just compute the values at $x=[n-1]_q,[n-2]_q,\ldots, [\lceil n/2\rceil-1]_q$).\par
    Since $\alpha\circ \csf_q$ and $\chi_q$ are multiplicative, satisfy the modular law, and agree at the complete graphs, they must coincide, $\alpha\circ \csf_q=\chi_q$. If $h_1$ and $h_2$ are abelian Hessenberg functions with the same area sequence, then $\chi_q(h_1)=\chi_q(h_2)$, which implies that $\alpha(\csf_q(h_1)-\csf_q(h_2))=0$. However, $\csf_q(h_1)-\csf_q(h_2)$ is a $\mathbb{Q}(t)$-linear combination of $e_{n-i,i}$ (see \cite[Theorem 6.3]{ShareshianWachs}). By the linear independence of  $\alpha(e_{n-i,i})$, we have that $\csf_q(h_1)=\csf_q(h_2)$.
\end{proof}
The following corollary is evident from Corollary \ref{cor:fcsf}.
\begin{Cor}
Let $h_1$ and $h_2$ be abelian Hessenberg functions in $\D_n$ such that the area sequence of $h_1$ is a permutation of the area sequence of $h_2$. Then $f(h_1)=f(h_2)$ for every function $f\col \D\to \A$ satisfying the modular law.
\end{Cor}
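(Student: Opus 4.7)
The plan is to observe that this corollary is an immediate consequence of Corollary \ref{cor:fcsf} combined with Proposition \ref{prop:abelcsf}. Indeed, Corollary \ref{cor:fcsf} says that any function $f\col \D\to \A$ satisfying the modular law admits the expansion
\[
f(h)=\sum_{\lambda\vdash n}\frac{\csf_{q,\lambda}(h)}{\lambda!_q}f(k_\lambda),
\]
so the value $f(h)$ depends on $h$ only through the collection of coefficients $\{\csf_{q,\lambda}(h)\}_{\lambda\vdash n}$.

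Next, I would invoke Proposition \ref{prop:abelcsf}, which establishes that when $h_1$ and $h_2$ are abelian Hessenberg functions whose area sequences differ by a permutation, one has $\csf_q(h_1)=\csf_q(h_2)$. By the uniqueness of the expansion in the elementary basis $\{e_\lambda\}_{\lambda\vdash n}$, this forces $\csf_{q,\lambda}(h_1)=\csf_{q,\lambda}(h_2)$ for every partition $\lambda$ of $n$. Substituting into the displayed formula above gives $f(h_1)=f(h_2)$, which is what we want.

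Since the two cited statements do all the real work, there is no genuine obstacle here; the corollary is essentially a direct combination. The only care needed is to note that Corollary \ref{cor:fcsf} is stated for target algebras over $\mathbb{Q}(q)$, which matches the hypothesis of the present corollary, and that Proposition \ref{prop:abelcsf} applies precisely to pairs of abelian Hessenberg functions with permutation-equivalent area sequences.
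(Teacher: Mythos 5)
Your proposal is correct and matches the paper exactly: the paper simply remarks that the corollary is ``evident from Corollary \ref{cor:fcsf},'' which is precisely the combination of that expansion formula with Proposition \ref{prop:abelcsf} that you spell out. Your extra observations (uniqueness of the $e$-expansion forcing equality of the coefficients $\csf_{q,\lambda}$, and the compatibility of the coefficient field) are exactly the small details the paper leaves implicit.
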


We remark that $h_1=(2,4,4,5,5)$ and $h_2=(3,3,4,5,5)$ have area sequences $(1,2,1,1,0)$ and $(2,1,1,1,0)$, but different chromatic quasisymmetric functions, which is possible because $h_1$ is not abelian. \par
 We now prove that after rearranging the area sequence, we get an abelian Hessenberg function for which all steps in Algorithm \ref{alg:alg} are positive.

\begin{Lem}
\label{lem:abelian}
Given an abelian Hessenberg function $h_1\in \D_n$, there exists $h_2\in \D_n$, also abelian, with area sequence $(b_i)$ having the following two properties:
\begin{enumerate}
    \item The sequence $(b_i)$ is a permutation of the area sequence of $h_1$.
    \item We have that $b_1\geq b_i$ for every $i\in [n]$.
\end{enumerate}
\end{Lem}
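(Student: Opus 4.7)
My plan is to give an explicit construction of $h_2$ and verify it has the required properties. Let $a = (a_1, \dots, a_n)$ be the area sequence of $h_1$, set $k = h_1(1)$ (so $a_1 = k-1$ and $a_j = n-j$ for $j \geq k+1$), let $M := \max_j a_j$, and write $A$ for the multiset $\{a_1, \dots, a_n\}$. I propose to define the area sequence $b$ of $h_2$ by $b_1 := M$, $b_j := n-j$ for $j \geq M+2$, and letting $b_2 \geq b_3 \geq \cdots \geq b_{M+1}$ be the non-increasing rearrangement of the multiset $B := A \setminus \bigl(\{M\} \cup \{0,1,\dots,n-M-2\}\bigr)$. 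By construction $(b_i)$ is a permutation of $(a_i)$ with $b_1 = M$ the maximum, so once I check that $b$ is a valid area sequence, the identities $h_2(1) = M+1$ and $h_2(M+2) = n$ make $h_2$ abelian.

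A key intermediate fact I would establish first is that the support of $A$ is the full interval $[0,M]$. This uses the ``no down-skip'' property $a_{j+1} \geq a_j - 1$: starting from a position where the walk attains $M$, the descent to $a_n = 0$ can never jump past an integer, so every $v \in [0,M]$ is hit. Together with the bound $M \geq a_1 = k-1$, which yields $n-M-2 \leq n-k-1$, this shows that every element of $\{M\} \cup \{0,1,\dots,n-M-2\}$ appears in $A$. In the regime $M \leq (n-2)/2$, where $M$ is removed twice in forming $B$, the Hessenberg constraints $k-1 \leq M$ and $n-k-1 \leq M$ together force $k = M+1$, and then both $a_1 = M$ and $a_{n-M} = M$ supply the two required copies, making $B$ a legitimate multiset of cardinality $M$.

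Next I would verify the area-sequence axioms for $b$. The bound $b_j \leq n-j$ is a counting argument: since $a_l \leq n-l$ for every $l$, at most $j-1$ elements of $A$ can exceed $n-j$, and the removal of the element $M$ handles the borderline case $M > n-j$. The endpoint bound $b_{M+1} \leq n-M-1$ then gives $b_{M+1} - 1 \leq n-M-2 = b_{M+2}$ automatically. The left-boundary gap $b_2 \geq M-1$ splits into two sub-cases: if $M \geq n/2$ then $M-1 \in [n-M-1,M-1]$, and values in that range automatically survive the removal; if $M < n/2$ the analysis above forces $k = M+1$, so $a_2 \geq a_1 - 1 = M-1$ and $a_2 \leq M$ place either $M$ or $M-1$ in the walk, and in both sub-cases $M-1$ belongs to the support of $B$.

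The delicate point, and the main obstacle, is ruling out interior gaps in the sorted segment $b_2 \geq \cdots \geq b_{M+1}$; equivalently, showing that the support of $B$ is an interval of consecutive integers. Suppose for contradiction that there is a hole at some $v_0$ with another $v'' < v_0$ still in the support of $B$. The fact that $\mathrm{supp}(A) = [0,M]$ together with the analysis above rules out $v_0 \in [n-M-1, M]$, so $v_0 \in [0, n-M-2]$ and the multiplicity of $v_0$ in $A$ equals one, meaning that none of $a_1, \dots, a_k$ equals $v_0$ while some $a_l = v''$ for $l \leq k$ with $v'' < v_0$. If $v_0 \leq k-1$, the walk $(a_1, \dots, a_k)$ starts at $a_1 = k-1 \geq v_0$, and the no-down-skip property forces it to touch $v_0$ before reaching any $v'' < v_0$, a contradiction. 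If $v_0 \geq k$, then combining $v_0 \leq n-M-2$ with the inequality $M \geq a_{k+1} = n-k-1$ gives $v_0 \leq k-1$, again contradicting $v_0 \geq k$. Hence the support of $B$ is an interval, $b$ is a valid area sequence, and $h_2$ is the desired abelian Hessenberg function.
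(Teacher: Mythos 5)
Your construction is correct and genuinely different from the paper's. The paper sets $j_0=\min\{j:h_1(j)=n\}$, reduces via the transpose (which permutes the area sequence) to the case $a_1\geq a_{j_0}$, and then simply sorts the head $a_1,\ldots,a_{j_0-1}$ in non-increasing order while leaving the tail untouched; the area-sequence axioms are then checked by a short contradiction argument. You instead assemble the sequence explicitly from the global maximum $M$, the forced tail $n-j$ for $j\geq M+2$, and a sorted middle block obtained by multiset subtraction. This avoids the transpose reduction entirely and makes $h_2$ completely explicit, at the price of heavier bookkeeping; both proofs ultimately rest on the same two facts, namely that the support of the area sequence is the full interval $[0,M]$ and that the tail of an abelian Hessenberg function is determined.

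One boundary case in your verification of $b_2\geq M-1$ is not covered as written. Your dichotomy is $M\geq n/2$ versus $M<n/2$, but the analysis you invoke in the second branch (forcing $k=M+1$) was established only for $2M\leq n-2$; when $2M=n-1$ the constraints $n-M-1\leq k\leq M+1$ give only $k\in\{M,M+1\}$, and for $k=M$ one has $a_1=M-1$ rather than $M$, so the step ``$a_2\geq a_1-1=M-1$'' does not apply. The conclusion still holds there: with $2M=n-1$ and $k=M$ the tail $a_{k+1},\ldots,a_n$ equals $M,M-1,\ldots,0$ and coincides, as a multiset, with the removed set $\{M\}\cup\{0,\ldots,n-M-2\}$, so $B$ is exactly the head multiset $\{a_1,\ldots,a_k\}$ and contains $a_1=M-1$, whence $b_2\geq M-1$. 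With that one-line patch the argument is complete; the remaining verifications (the counting bound $b_j\leq n-j$, the interval structure of the support of $B$, and the abelianness of $h_2$) are all sound.
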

\begin{proof}
We begin noting that the area sequence of $h^t$ is a permutation of the area sequence of $h$ for every $h\in \D$. This is clear from the fact that the associated indifference graphs are isomorphic and hence have tha same chromatic polynomial.\par

  Let $a_i$ be the area sequence of $h_1$ and define $j_0:=\min\{j\in [n], h_1(j)=n\}$. Up to taking transposes, we may assume that $a_1\geq a_{j_0}$. Define the sequence $(b_i)$ such that $b_1,\ldots, b_{j_0}-1$ is a non-increasing permutation of $a_1,\ldots, a_{j_0-1}$ and $b_i=a_i$ for $i=j_0,\ldots, n$. Clearly $b_1\geq b_i$ for every $i\in [j_0-1]$ and $b_1\geq a_1\geq a_{j_0}=b_{i_0}> b_i$ for every $i=j_0+1,\ldots, n$. All that is left to prove is that $(b_i)$ is the area sequence of an abelian Hessenberg function. \par
  First, we prove that $(b_i)$ induces a Hessenberg function $h_2(i)=b_i+i$. This means proving that $b_i+i\leq n$ and $b_{i+1}\geq b_i-1$, both of which we prove by contradiction.  If $b_i+i>n$ for some $i\in [n]$, then $i\in [j_0-1]$, since $b_i=a_i$ for $i=j_0,\ldots, n$. Since there must exist $l\in\{i,\ldots, j_0-1\}$ and $j\leq i$ such that $a_l=b_j$, we have that $a_l+l=b_j+l\geq b_i+i>n$, which is a contradiction.\par
   Now, assume that there exists $i$ such that $b_i\geq 2+b_{i+1}$. This means that there exists $j\in [j_0-1]$ such that $a_{j}+1\neq a_{i}$ for every $i\in [j_0-1]$  and $a_j<\max\{a_i;i\in [j_0-1]\}$. Since $a_{i+1}\geq a_i-1$ for every $i\in [n]$, this can only happen if $a_j=\max\{a_i.i\in [j]\}$, in particular $a_j\geq a_1\geq a_{j_0}$. However, every value between $\max\{a_i;i\in [j_o-1]\}$ and $a_{j_0}$ must appear in $(a_i)_{i\in [j_0]}$, which proves that there exists $i$ such that $a_j+1=a_i$.
\end{proof}

\begin{Prop}
\label{prop:csfunimodal}
If $h$ is an abelian Hessenberg function then $\csf_q(h)$ is $e$-positive and $e$-unimodal. That is, the coefficients of $\csf_q$ in the elementary basis are unimodal polynomials in $q$ with positive coefficients.
\end{Prop}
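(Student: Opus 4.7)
The plan is to rearrange $h$ into a more convenient abelian Hessenberg function satisfying the positivity hypothesis of Remark \ref{rem:networkpos}, and then extract both positivity and unimodality from the associated planar network of Remark \ref{rem:network}.

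First, I would apply Lemma \ref{lem:abelian} to produce an abelian $h_2$ whose area sequence is a permutation of that of $h$ and satisfies $b_1 \geq b_i$ for all $i$. By Proposition \ref{prop:abelcsf}, $\csf_q(h) = \csf_q(h_2)$. Setting $h_3 := h_2^t$, the transpose invariance from Theorem \ref{thm:main} gives $\csf_q(h_3) = \csf_q(h_2) = \csf_q(h)$, and $h_3$ remains abelian. Because the maximum area in $h_2$ sits at position $1$, after transposing the maximum area in $h_3$ sits precisely at $j_0 := \min\{j : h_3(j) = n\}$; equivalently, the largest clique of the indifference graph of $h_3$ contains the vertex $n$. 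This is exactly the hypothesis of Remark \ref{rem:networkpos}.

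Second, Remark \ref{rem:networkpos} then guarantees that every numerator appearing in the weights of the planar network for $h_3$ is of the form $[a]_q$ or $[n-j+1]_q - [a]_q$ with $0 \leq a \leq n - j + 1$, hence a polynomial in $q$ with non-negative coefficients. Summing over all paths and tracking that the denominators $\prod [n - y + 1]_q$ along a path ending at $(j,j)$ multiply to $(n-j)!_q$ and cancel the corresponding factor of the endpoint weight $\csf_q(k_j \cdot k_{n-j}) = j!_q (n-j)!_q\, e_{n-j, j}$, we obtain an expansion
\[
\csf_q(h_3) = \sum_{j=0}^{\lfloor n/2 \rfloor} c_j(q)\, e_{n-j, j}, \qquad c_j(q) = j!_q \sum_{P \to (j, j)} \prod_{e \in P} \mathrm{num}(e),
\]
that is manifestly $e$-positive, since every factor in every summand is a polynomial in $q$ with non-negative coefficients.

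Third, I would deduce $e$-unimodality from the observation that each factor $j!_q$, $[a]_q$, and $[n-j+1]_q - [a]_q$ is not merely $q$-positive but also palindromic and unimodal. Consequently each single-path contribution $j!_q \prod_e \mathrm{num}(e)$ is itself palindromic and unimodal. The main obstacle is that distinct paths may produce contributions palindromic about different centers, and sums of palindromic unimodal polynomials with different centers need not be unimodal. To resolve this I would combine the global palindromic property of $\csf_q(h_3)$ (noted in the remark following Algorithm \ref{alg:alg}), which forces each $c_j(q)$ to be palindromic about $\ell(h_3)/2$, together with a symmetric grouping of paths under the involution induced by this palindromic structure so that each group is individually palindromic unimodal about the common center. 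An alternative route --- indeed the one carried out in the next theorem of the paper --- is to identify $c_j(q)/j!_q$ with a known unimodal quantity such as a $q$-hit number $R_{j, n-j-1}(\lambda)$ (as in the characterization stated in the introduction), from which $e$-unimodality follows immediately.
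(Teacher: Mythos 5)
Your reduction to the case covered by Remark \ref{rem:networkpos} (via Lemma \ref{lem:abelian}, Proposition \ref{prop:abelcsf}, and transposition) and your derivation of $e$-positivity from the network are correct and match the paper. The gap is in the unimodality step. You correctly identify the obstacle --- distinct paths to the same endpoint $(l,l)$ contribute palindromic unimodal polynomials that a priori have different centers --- but neither of your two proposed resolutions closes it. Knowing that the total coefficient $c_l(q)$ is palindromic about $\ell(h)/2$ does not help: a sum of palindromic unimodal polynomials whose individual centers are merely symmetric about a common point need not be unimodal (e.g.\ $(1+q+q^2)+(q^2+q^3+q^4)=1+q+2q^2+q^3+q^4$), and you never specify the ``involution'' on paths nor show that each orbit sum is unimodal about the common center. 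The appeal to $q$-hit numbers is a legitimate alternative (the paper itself notes after Theorem \ref{thm:qhit} that unimodality of $q$-hit numbers gives a different proof of this proposition), but it rests on Theorem \ref{thm:qhit} together with the external result \cite[Theorem 6]{Hag98}, and you do not carry it out.

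The missing idea is that the obstacle you flag simply does not occur: \emph{all} paths from $(i,h(i))$ to $(l,l)$ contribute palindromic unimodal polynomials with the \emph{same} center. The paper proves this by noting that any two such paths are connected by a sequence of local swaps exchanging a subpath $(-1,-1),(0,-1)$ with $(0,-1),(-1,-1)$, and that each swap preserves the center: with the weights as in Figure \ref{fig:subpath}, the two products $[a]_q\bigl([n-(j-1)+1]_q-[b-1]_q\bigr)$ and $\bigl([n-j+1]_q-[a]_q\bigr)[b]_q$ are palindromic unimodal with the common center
\[
\frac{a-1}{2}+\frac{n-j+b}{2}\;=\;\frac{n-j+a+b-1}{2}\;=\;\frac{n-j+a}{2}+\frac{b-1}{2},
\]
using that $[a]_q-[b]_q$ ($a>b$) is palindromic unimodal with center $\tfrac{a+b-1}{2}$. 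Once all single-path contributions share a center, their sum $P_l$ is palindromic unimodal, and multiplying by $l!_q(n-h(i))!_q$ preserves this. Without this swap argument (or a completed $q$-hit-number substitute), your proof of unimodality is incomplete.
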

\begin{proof}
By Lemma \ref{lem:abelian} and Proposition \ref{prop:abelcsf}  above, we have that there exists an abelian Hessenberg function $h'$ with $\csf_q(h')=\csf_q(h)$ and such that its area sequence $(b_i)$ satisfies $b_1\geq b_i$.  This implies that $h'^t$ satisfies the condition in Remark \ref{rem:networkpos}. Hence, substituting $h$ with $h'^t$, we can assume that $h$ satisfies the condition in Remark \ref{rem:networkpos}.\par

As in Remark \ref{rem:network}, we let $i=\max\{j; h(j)<n\}$. Each path from $(i,h(i))$ to a final point $(l,l)$ has final denominator $(n-l)!_q/(n-h[(i))!_q$. Since $\csf_q(k_l\cdot k_{n-l})=l!_q(n-l)!_q e_{n-l,l}$, we have that the coefficient of $e_{n-l,l}$ is $\csf_q(h)$ is $l!_q(n-h(i))!_q\cdot P_l$, where $P_l$ is obtained from the numerators in the network. In this cases, Remark \ref{rem:networkpos} proves that $\csf_q(h)$ is $e$-positive.\par
To prove unimodality, it is sufficient to show that every path from $(i,h(i))$ to $(l,l)$ has unimodal contribution with the same center. Indeed, we recall that the product of two palindromic unimodal polynomials with centers $a$ and $b$ is a palindromic unimodal polynomial with center $a+b$ ,and the sum of two palindromic unimodal polynomials with same center $a$ still is a palindromic unimodal polynomial with center $a$ (see \cite{Stanley86}). By the discussion in previous paragraph, we can consider only the numerators in the network described in Remark \ref{rem:network}.\par

 We note that the difference $[a]_q-[b]_q$, for $a>b$, is a palindromic unimodal polynomial with center $\frac{a+b-1}{2}$,  when $b=0$, we get that $[a]_q$ is a palindromic unimodal polynomial with center $\frac{a-1}{2}$. In particular, every path from $(i,h(i))$ to $(l,l)$ has unimodal contribution. Moreover, every such path can be obtained from any other path (from $(i,h(i))$ to $(l,l)$) by successively replacing a subpath  $(-1,-1),(0,-1)$ with $(0,-1),(-1,-1)$ (or vice-versa), that is, replacing the red path with the blue path (or vice-versa) in Figure \ref{fig:subpath}.
\begin{figure}[htb]
    \centering
    \begin{tikzpicture}[scale=2, decoration={
    markings,
    mark=at position 0.5 with {\arrow{>}}}]
    \node at  (1,2) [shape=circle, fill=black, inner sep=2pt] {};
    \node at  (1,1) [shape=circle, fill=black, inner sep=2pt] {};
    \node at  (0,1) [shape=circle, fill=black, inner sep=2pt] {};
    \node at  (0,0) [shape=circle, fill=black, inner sep=2pt] {};
    \node at (0.4,1.7) {$[a]_q$};
    \node at (2,1.5) {$([n-j+1]_q-[a]_q)$};
    \node at (-1.2,0.5) {$([n-(j-1)+1]_q-[b-1]_q)$};
    \node at (0.6,0.3) {$[b]_q$};
    \draw[color=red,  postaction=decorate] (0,1) -- (0,0);
    \draw[color=red,  postaction=decorate] (1,2) -- (0,1);
    \draw[color=blue, postaction=decorate] (1,1) -- (0,0);
    \draw[color=blue,  postaction=decorate] (1,2) -- (1,1);
    \end{tikzpicture}
    \caption{Caption}
    \label{fig:subpath}
\end{figure}

Assuming that $(i,j)$ is the starting point in Figure \ref{fig:subpath}, and that $a_{i,j}=a$ and $a_{i,j-1}=b$, we have that the (numerators of the) weights in each edge are the ones depicted in Figure \ref{fig:subpath}. However, both products $[a]_q([n-(j-1)+1]_q-[b-1]_q)$  and $([n-j+1]_q-[a]_q)[b]_q$ are palindromic unimodal polynomials of the same center. Indeed, we have that
\[
\frac{a-1}{2}+\frac{n-(j-1)+1+(b-1)-1}{2}=\frac{n-j+a+b-1}{2}=\frac{n-j+1+a-1}{2}+\frac{b-1}{2}.
\]
This proves that every path from $(i,h(i))$ to $(l,l)$ gives a palindromic unimodal polynomial with the same center, hence the sum $P_l$ of all these contributions will remain palindromic and unimodal. This means that the coefficient $l!_q(n-h(i))!_qP_l$ is a palindromic unimodal polynomial.
\end{proof}

We finish this section with a remark concerning the Hopf algebra of Dyck paths.

\begin{Rem}
  Abusing notation, we denote by $\mathcal{D}$ the Hopf algebra of Dyck paths defined in \cite{GP}. As a vector space over $\mathbb{Q}(t)$, the algebra $\D$ has the set of Dyck paths as a basis, with multiplication induced by concatenation. The comultiplication is a little more involved and we refer the reader to \cite{GP}.\par
  One can run Algorithm \ref{alg:alg} in $\mathcal{D}$. That is, if $I\subset \mathcal{D}$ is the vector space generated by relations $(1+q)h_1-qh_0-h_2$ whenever $h_0,h_1,h_2$ are Dyck paths satisfying one of the conditions in Definition \ref{def:modular}, then
\[
h\equiv \sum_{\lambda\vdash n}\frac{\csf_{q,\lambda}(h)}{\lambda!_q}k_{\lambda}\mod I
\]
for every Dyck path $h$, where $\lambda!_q=\prod_{i=1}^{\ell(\lambda)}\lambda_i!q$. In particular, we have $I=\ker(\csf_q)$ when $\csf_q$ is regarded as a Hopf algebra map $\csf_q\col \mathcal{D}\to \Lambda_q$.
\end{Rem}

\section{Rook placements and $q$-hit numbers}
  In this section, we prove a $q$-analogue of \cite[Theorem 4.3]{StanStem}. We mention that rook placements also appear in relation with terms in the $e$-expansion of $\csf_q$, see \cite{AlexPanova}.  First we must recall the definition of the $q$-analogue of hit numbers introduced in \cite{GarsiaRemmel}.\par
Let $E_m$ be the $m\times m$ board and let $\lambda$ be a partition such that its Young diagram fits in $E_m$, that is $\lambda_1,\ell(\lambda)\leq m$. To keep the notation consistent with the last sections, we will number the lines of $E_m$ from bottom to top. This means that the Young diagram of $\lambda$ is the set of cells $\{(i,j); j\leq \lambda_{m+1-i}\}$.\par
Define $B_{j,m}(\lambda)$ as the set of placements of $m$ rooks on $E_m$ such that precisely $j$ are in the Young diagram of $\lambda$. Each rook placement has a \emph{$\lambda$-weight} defined as in \cite{Dworkin}. This weight is the number of cells $e\in E_m$ such that
\begin{enumerate}
    \item there is no rook on $e$,
    \item there is no rook to the left of $e$,
    \item one of the following holds
    \begin{enumerate}
        \item if $e\in \lambda$ then the rook on the same column of $e$ is in $\lambda$ and below $e$,
        \item if $e\notin \lambda$ then the rook on the same column of $e$ is either in $\lambda$ or below $e$.
    \end{enumerate}
\end{enumerate} See Figure \ref{fig:rook} for an example, where the black circles are the rooks, while the white circles correspond to the cells $e$ satisfying the conditions above. 
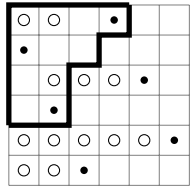
\begin{figure}[htb]  \centering
    \begin{tikzpicture}
\begin{scope}[scale=0.4]
\draw[help lines] (0,0) grid +(6,6);
\dyckpath{0,2}{0,0,1,1,0,1,0,1}
\dyckpath{0,2}{1,1,1,1,0,0,0,0}
\node at  (0.5,4.5) [shape=circle, fill=black, inner sep=1pt] {};
\node at  (1.5,2.5) [shape=circle, fill=black, inner sep=1pt] {};
\node at  (2.5,0.5) [shape=circle, fill=black, inner sep=1pt] {};
\node at  (3.5,5.5) [shape=circle, fill=black, inner sep=1pt] {};
\node at  (4.5,3.5) [shape=circle, fill=black, inner sep=1pt] {};
\node at  (5.5,1.5) [shape=circle, fill=black, inner sep=1pt] {};
\node at  (0.5,5.5) [shape=circle, draw, inner sep=1.5pt] {};
\node at  (0.5,1.5) [shape=circle, draw, inner sep=1.5pt] {};
\node at  (0.5,0.5) [shape=circle, draw, inner sep=1.5pt] {};
\node at  (1.5,5.5) [shape=circle, draw, inner sep=1.5pt] {};
\node at  (1.5,3.5) [shape=circle, draw, inner sep=1.5pt] {};
\node at  (1.5,1.5) [shape=circle, draw, inner sep=1.5pt] {};
\node at  (1.5,0.5) [shape=circle, draw, inner sep=1.5pt] {};
\node at  (2.5,1.5) [shape=circle, draw, inner sep=1.5pt] {};
\node at  (2.5,3.5) [shape=circle, draw, inner sep=1.5pt] {};
\node at  (3.5,3.5) [shape=circle, draw, inner sep=1.5pt] {};
\node at  (3.5,1.5) [shape=circle, draw, inner sep=1.5pt] {};
\node at  (4.5,1.5) [shape=circle, draw, inner sep=1.5pt] {};
\end{scope}
\end{tikzpicture}
    \caption{A rook placement in $B_{3,6}(\lambda)$ with $\lambda$-weight $12$ for $\lambda=(4,3,2,2)$.}
    \label{fig:rook}
\end{figure}

We then define
\[
R_{j,m}(\lambda):=\sum_{\sigma\in B_{j,m}(\lambda) } q^{\wt_{\lambda}(\sigma)}.
\]
where $\wt_{\lambda}(\sigma)$ is the $\lambda$-weight of $\sigma$.
\begin{Lem}
\label{lem:Rjm}
Let $\lambda$ be a partition in $E_m$. Suppose that either $j=\ell(\lambda)$ and $\lambda_1\leq m-1$ or $j=\lambda_1$ and $\ell(\lambda)\leq m-1$. Then
\[
R_{j,m}(\lambda)=([m]_q-[j]_q)R_{j,m-1}(\lambda).
\]
\end{Lem}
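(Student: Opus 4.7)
I would focus on case (i), $j = \ell(\lambda)$ with $\lambda_1 \leq m-1$; case (ii) will follow by the symmetric argument obtained by swapping the roles of rows and columns. The plan is to pick out a distinguished rook and decompose placements in $B_{j,m}(\lambda)$ according to its position.

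The key observation is that since $\lambda_1 \leq m-1$, column $m$ of $E_m$ contains no cells of $\lambda$, so the rook in column $m$ lies outside $\lambda$. Because $j = \ell(\lambda)$ and each of the $\ell(\lambda)$ rows of $\lambda$ must already carry a rook in $\lambda$, the rook in column $m$ must sit in one of the $m - \ell(\lambda) = m - j$ rows below $\lambda$. This provides exactly the $m-j$ ``slots'' matching the $m-j$ terms of $[m]_q - [j]_q = q^j [m-j]_q$.

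For each $r_0 \in \{1, \ldots, m-j\}$, I would set up a bijection between $\{\sigma \in B_{j,m}(\lambda) : \sigma(m) = r_0\}$ and $B_{j, m-1}(\lambda)$ by deleting row $r_0$ and column $m$ and identifying the remaining $(m-1)\times(m-1)$ array with $E_{m-1}$. Since the deleted row sits strictly below $\lambda$, this identification lines up $\lambda$ in both boards, and one checks directly that for any ``old'' cell $(c,r)$ with $c < m$ and $r \neq r_0$, all three clauses of Dworkin's definition---no rook on it, no rook to its left, and the column-versus-$\lambda$ clause (3a)/(3b)---translate identically between $E_m$ and $E_{m-1}$. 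Hence the ``old'' contributions reassemble into $\wt_\lambda(\sigma')$ for the image $\sigma' \in B_{j, m-1}(\lambda)$.

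What remains is to tally the contributions from the ``new'' cells. No cell $(m, r)$ with $r \neq r_0$ contributes, because row $r$ already carries a rook in some column $\leq m-1$. For a cell $(c, r_0)$ with $c < m$, it is outside $\lambda$ and has no rook to its left (the only rook in row $r_0$ sits at column $m$), so by clause (3b) it contributes precisely when the rook in column $c$ is either in $\lambda$ or strictly below row $r_0$. The first alternative accounts for exactly $j$ cells (one per $\lambda$-rook, and since each $\lambda$-rook lies in a row above $r_0 \leq m - \ell(\lambda)$, it is automatically not below $r_0$). The second alternative accounts for exactly $r_0 - 1$ cells: the $m - j - 1$ non-$\lambda$ rooks in columns $<m$ occupy the rows $\{1, \ldots, m-j\}\setminus\{r_0\}$, and exactly $r_0 - 1$ of those rows lie below $r_0$. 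The two alternatives are disjoint, giving extra weight $j + r_0 - 1$, hence
\[
R_{j,m}(\lambda) = \sum_{r_0 = 1}^{m-j} q^{j + r_0 - 1}\, R_{j, m-1}(\lambda) = q^j [m-j]_q\, R_{j, m-1}(\lambda) = ([m]_q - [j]_q)\, R_{j, m-1}(\lambda).
\]
Case (ii) is handled symmetrically: delete the bottom row of $E_m$ and the column $c_0 \in \{\lambda_1 + 1, \ldots, m\}$ containing the rook in the bottom row, parametrize by $k = c_0 - \lambda_1 \in \{1, \ldots, m-j\}$, and a parallel bookkeeping yields extra weight $j + (m - c_0) = m - k$, with $\sum_{k=1}^{m-j} q^{m-k} = q^j [m-j]_q$ again. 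The main obstacle is the patient, mechanical verification that Dworkin's three-clause weight is preserved on ``old'' cells and produces exactly the claimed $j + r_0 - 1$ (respectively $m - k$) contribution on the ``new'' row and column; once the correct row/column is removed to realize the bijection, the rest is case analysis.
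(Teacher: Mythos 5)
Your proof is correct, and it takes a genuinely different route from the paper's in both halves. For the case $j=\ell(\lambda)$, the paper does not stratify by the position of the column-$m$ rook: it fixes the placement $p$ of the $j$ rooks inside $\lambda$ (one per row of $\lambda$), observes that the total weight splits as a quantity depending only on $p$ plus the standard inversion-type statistic of the $m-j$ free rooks on the complementary $(m-j)\times(m-j)$ board, and then compares: the $p$-dependent part grows by exactly $j$ in passing from $E_{m-1}$ to $E_m$ (one extra non-$\lambda$ cell with no rook to its left in each of the $j$ columns carrying a $\lambda$-rook), while the free parts contribute $(m-j)!_q$ versus $(m-j-1)!_q$. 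That argument exposes a product structure $R_{j,m}(\lambda)=(m-j)!_q\sum_p q^{c_m(p)}$, which is slightly more information than the recursion itself; your version instead gives $m-j$ explicit weight-shifting bijections onto $B_{j,m-1}(\lambda)$, indexed by the row of the column-$m$ rook, with the shifts $j+r_0-1$ summing to $q^j[m-j]_q$. Both are elementary and of comparable length. The more substantive divergence is in the case $j=\lambda_1$: the paper reduces it to the first case via the identity $R_{j,m}(\lambda)=R_{j,m}(\lambda^{t})$, which it imports from Dworkin's deletion-contraction results, whereas you handle it directly by deleting the bottom row together with the column $c_0>\lambda_1$ of its rook and counting the $j+(m-c_0)$ new contributing cells. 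Your treatment is therefore self-contained where the paper's leans on a nontrivial external symmetry of $q$-hit numbers; the price is the extra bookkeeping on the new row and column, which you have carried out correctly (in particular you rightly note that the two alternatives in clause (3b) cannot overlap here, so the counts $j$ and $r_0-1$ do not double-count).
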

\begin{proof}
Assume first that $j=\ell(\lambda)$ and $\lambda_1\leq m-1$. Then the placements in $B_{j,m}(\lambda)$ are such that the rooks in the first $j$-lines are in $\lambda$, so we only have to place $m-j$ rooks on the remaining $(m-j)\times (m-j)$ board.

If a rook is placed in $\lambda$ on column $k$, we can actually compute the contribution to the $\lambda$-weight of the cells not in $\lambda$ on column $k$. This is precisely $m-j-a_k$, where $a_k$ is the number columns to the left of column $k$ with no rooks on $\lambda$. In particular it does not depend on how to place the remaining $m-j$ rooks. The same holds true for $B_{j,m-1}$. Hence, we have that
\[
R_{j,m}(\lambda)=t^j[m-j]_qR_{j,m-1}(\lambda).
\]
The factor $t^j$ comes from the differences $(m-j-a_k)-(m-1-j-a_k)=1$ for each rook in $\lambda$, and $[m-j]_q$ comes from the ratio $(m-j)!_q/(m-j-1)!_q$.\par
 When $j=\lambda_1$, we just note that $R_{j,m}(\lambda)=R_{j,m}(\lambda^t)$ (this follows from the deletion-contraction recurrence \cite[Corollary 6.12]{Dworkin}), and the result follows.
\end{proof}
\begin{Lem}
\label{lem:Rjrel}
Let $\lambda$ be a partition in $E_m$ such that there exists $i$ such that $\lambda_{i+1}<\lambda_{i}<\lambda_{i-1}$ (where we assume $\lambda_0=\infty$). Consider the partitions $\mu$ and $\sigma$ obtained from $\lambda$ by removing the cell $(m+1-i,\lambda_i)$ and adding the cell $(m+1-i,\lambda_i+1)$, respectively. Then
\begin{align*}
(1+q)R_{j,m}(\lambda)=&qR_{j,m}(\mu)+R_{j,m}(\sigma),\\
(1+q)R_{j,m}(\lambda^{t})=&qR_{j,m}(\mu^{t})+R_{j,m}(\sigma^{t}).
\end{align*}

\end{Lem}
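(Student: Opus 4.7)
The plan is to prove the first identity by classifying rook placements according to their interaction with the cells where $\lambda$, $\mu$, $\sigma$ differ, then use an involution to handle the configurations where the rook counts shift. The key observation is that $\mu$ and $\sigma$ are obtained from $\lambda$ by removing the cell $e_- := (r,\lambda_i)$ and adjoining $e_+ := (r,\lambda_i+1)$ respectively, where $r := m+1-i$. Consequently, for a fixed permutation $\pi \in S_m$, the only cells whose Dworkin contribution to $\wt_\lambda(\pi), \wt_\mu(\pi), \wt_\sigma(\pi)$ can differ are those lying in row $r$ or in columns $\lambda_i, \lambda_i+1$, because those are the only cells whose membership in the Young diagram, or whose column's rook-membership in the Young diagram, changes. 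The hypothesis $\lambda_{i+1} < \lambda_i < \lambda_{i-1}$ ensures that $e_-$ is a removable corner and $e_+$ is an addable corner, so $\mu,\sigma$ are genuine partitions.

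First I would partition $S_m$ by the column $c_r := \pi^{-1}(r)$ of the rook in row $r$. When $c_r \le \lambda_i - 1$ or $c_r \ge \lambda_i + 2$, the rook in row $r$ is unaffected by the modification, so $\pi$ contributes (or does not contribute) to all three of $B_{j,m}(\lambda), B_{j,m}(\mu), B_{j,m}(\sigma)$ simultaneously. When $c_r = \lambda_i$ the rook sits on $e_-$, placing $\pi$ in $B_{j,m}(\lambda)\cap B_{j,m}(\sigma)\cap B_{j-1,m}(\mu)$; when $c_r = \lambda_i+1$ the rook sits on $e_+$, placing $\pi$ in $B_{j,m}(\sigma)$ only among the three. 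To pair these latter two cases I would use the involution $\iota \colon S_m \to S_m$ that swaps the rooks in columns $\lambda_i$ and $\lambda_i+1$, i.e.\ $\iota(\pi)(\lambda_i) = \pi(\lambda_i+1)$, $\iota(\pi)(\lambda_i+1) = \pi(\lambda_i)$, and identity elsewhere; this exchanges the ``rook on $e_-$'' and ``rook on $e_+$'' configurations.

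For each orbit $\{\pi,\iota(\pi)\}$ I would then directly compute the combined contribution to $(1+q) R_{j,m}(\lambda) - q R_{j,m}(\mu) - R_{j,m}(\sigma)$. Every cell outside row $r$ and outside columns $\lambda_i, \lambda_i+1$ contributes the same Dworkin weight to each of the three partitions and therefore cancels, so the verification reduces to a bounded case analysis of the Dworkin conditions (i)--(iii) on the handful of cells in this ``cross''. In each case the identity collapses to a version of the basic modular identity $(1+q)q^a = q\cdot q^{a-1} + q^{a+1}$ (or the degenerate $(1+q)q^a = q\cdot q^a + q^a$ when the rook in row $r$ is far from the corner), and the involution $\iota$ matches the leftover cases between (II) and (III) so that the ``missing'' placement for one of the three terms in $B_{j,m}$ is supplied by its partner.

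The second identity $(1+q)R_{j,m}(\lambda^t) = qR_{j,m}(\mu^t) + R_{j,m}(\sigma^t)$ then follows immediately from the first by the transposition symmetry $R_{j,m}(\nu) = R_{j,m}(\nu^t)$ used already in Lemma \ref{lem:Rjm} (via \cite[Corollary 6.12]{Dworkin}), combined with the observation that transposition commutes with adding or removing the cells in question ($\mu^t$ and $\sigma^t$ are obtained from $\lambda^t$ by removing and adjoining corners at the transposed position). The main obstacle will be the bookkeeping in step two: carefully tracking, for each of the finitely many relevant cells, whether Dworkin's three conditions are satisfied under $\lambda$, $\mu$, and $\sigma$, and under both $\pi$ and $\iota(\pi)$, so that the resulting $q$-exponents assemble into the stated identity. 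The rest of the argument is routine weight cancellation.
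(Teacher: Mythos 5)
Your strategy is genuinely different from the paper's, and as written it has a gap at its central step. The paper performs no direct analysis of rook placements: it invokes Dworkin's deletion--contraction recurrence for $q$-hit numbers \cite[Theorem 6.11]{Dworkin} twice --- once for $\lambda$ at the corner $(m+1-i,\lambda_i)$ and once for $\sigma$ at the corner $(m+1-i,\lambda_i+1)$ --- and observes that both instances involve the \emph{same} contracted board $\ol{\lambda}$, so subtracting the two identities cancels the contraction terms and yields $(1+q)R_{j,m}(\lambda)=qR_{j,m}(\mu)+R_{j,m}(\sigma)$ in one line. You are instead proposing to establish the relation from scratch by a placement-by-placement and involution-paired verification, which amounts to reproving the relevant part of Dworkin's recurrence.

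The gap is the step you call ``routine weight cancellation.'' For a placement whose row-$(m+1-i)$ rook lies in a column $\geq\lambda_i+2$, you assert the degenerate identity $(1+q)q^a=q\cdot q^a+q^a$, i.e.\ that $\wt_\lambda(\pi)=\wt_\mu(\pi)=\wt_\sigma(\pi)$. That assertion is precisely what has to be proved and it is not automatic: the cell $e_-=(m+1-i,\lambda_i)$ lies on the board for $\lambda$ and $\sigma$ but off the board for $\mu$, and $e_+=(m+1-i,\lambda_i+1)$ lies on the board only for $\sigma$, so these two cells are scored under \emph{different clauses} of condition (3) for the three boards, with outcomes depending on where the rooks of columns $\lambda_i$ and $\lambda_i+1$ sit relative to row $m+1-i$ and to the board. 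In general one gets $\wt_\mu=\wt_\lambda-\delta$ and $\wt_\sigma=\wt_\lambda+\varepsilon$ with $\delta,\varepsilon$ varying with the placement, and the identity needs $(\delta,\varepsilon)=(1,1)$ or $(0,0)$ for every such placement --- a claim you neither state nor check, and which is the entire content of the lemma in this case. Likewise, when a rook sits on $e_-$ or $e_+$, its own on/off-board status changes among $\lambda,\mu,\sigma$, which alters the scoring of \emph{every} cell in its column (not a bounded ``cross''); your involution must be shown to match these column-length weight shifts together with the shifted rook counts $j\mapsto j\pm 1$ that move contributions between the three terms, and none of that bookkeeping is carried out. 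The fix is either to do this analysis in full (essentially redoing \cite[Theorem 6.11]{Dworkin}) or simply to cite that recurrence, as the paper does. Your reduction of the second displayed identity to the first via the transposition symmetry $R_{j,m}(\nu)=R_{j,m}(\nu^t)$ is fine and is consistent with how the paper uses \cite[Corollary 6.12]{Dworkin} elsewhere.
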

\begin{proof}
We let $\ol{\lambda}$ be the contraction of the cell $(m+1-i,\lambda_i)$ of $\lambda$  (this means we remove the line $m+1-i$ and column $\lambda_i$ from $\lambda$). This is the same partition obtained from the contraction of the cell $(m+1-i,\lambda_i+1)$ of $\sigma$. By \cite[Theorem 6.11]{Dworkin}, we have that
\begin{align*}
R_{j,m}(\lambda)=qR_{j,m}(\mu)+R_{j-1,m-1}(\ol{\lambda})-q^nR_{j,m-1}(\ol{\lambda})\\
R_{j,m}(\sigma)=qR_{j,m}(\lambda)+R_{j-1,m-1}(\ol{\lambda})-q^nR_{j,m-1}(\ol{\lambda}),\\
\end{align*}
from which the first equality follows. The same argument holds for the transposes.
\end{proof}

Let $h\col[n]\to[n]$ be a Hessenberg function. Define the associated partition $\lambda$ as the partition such that $\lambda^{t}_i=n-h(i)$. When $h$ is abelian, this means that $\lambda$ is \emph{small} in the sense of \cite{StanStem}, i.e., $\lambda_1+\ell(\lambda)\leq n$. We define
\[
b_{j}(h):=\begin{cases}
q^j[n-2j]_qR_{j,n-j-1}(\lambda)& \text{ if }j\leq \lambda(1),\ell(\lambda)\leq n-j-1,\\
R_{j,n-j}(\lambda)&\text{ if }\{\lambda(1),\ell(\lambda)\}=\{j,n-j\},\\
0& \text{ otherwise }
\end{cases}
\]

\begin{Thm}
\label{thm:qhit}
If $h$ is an abelian Hessenberg function then
\[
\csf_q(h)=\sum_{j\leq n/2}j!_qb_{j}(h)e_{n-j,j}.
\]
\end{Thm}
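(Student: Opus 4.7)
My approach is to reduce both sides to functions of the partition $\lambda$ associated to $h$, and then induct. For the LHS, by \cite[Theorem~6.3]{ShareshianWachs} we may write $\csf_q(h)=\sum_j C_j(h)\,e_{n-j,j}$ whenever $h$ is abelian, and by Proposition~\ref{prop:abelcsf} each $C_j(h)$ depends only on the (unordered) area sequence, equivalently on the partition $\lambda$. For the RHS, the Garsia--Remmel transpose symmetry $R_{j,m}(\lambda)=R_{j,m}(\lambda^t)$ (\cite[Corollary 6.12]{Dworkin}, already invoked in Lemma~\ref{lem:Rjm}) shows $b_j(h)$ also depends only on $\lambda$, so I can freely replace $h$ with any convenient abelian representative of $\lambda$.

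The key engine is the planar-network description of $\csf_q(h)$ from Remark~\ref{rem:network}. For abelian $h$ with $i:=\max\{k;h(k)<n\}$, the coefficient $C_j(h)$ equals $j!_q(n-j)!_q$ times a weighted sum over lattice paths from $(i,h(i))$ to the endpoint $(j,j)$, with weights written in terms of the $q$-integers $[a_{r,s}]_q$ appearing in the remark. The plan is to show that this weighted path sum equals $b_j(h)/(n-j)!_q$, which after clearing denominators becomes a polynomial identity in $q$ matching the Garsia--Remmel $q$-hit number $R_{j,n-j-1}(\lambda)$ (and $R_{j,n-j}(\lambda)$ in the extreme regime $\{\lambda_1,\ell(\lambda)\}=\{j,n-j\}$).

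To establish that identity I induct on $\lambda$, using the modular law on the Hessenberg side paired with the two rook-number recursions provided. Lemma~\ref{lem:Rjm} produces exactly the factor $[n-j]_q-[j]_q=q^j[n-2j]_q$ that appears in the definition of $b_j(h)$, and so bridges the generic case with the extreme case in the piecewise definition. The three-term relation of Lemma~\ref{lem:Rjrel}, being of shape $(1+q)R=qR+R$, matches the shape of the modular law itself: I pick a corner of $\lambda$, realize the partition move $\lambda\leadsto(\mu,\sigma)$ by an application of the modular law to a suitable abelian representative (permuting the area sequence via Proposition~\ref{prop:abelcsf} if necessary to place the corner in a position where condition \eqref{item:1} or \eqref{item:2} of Definition~\ref{def:modular} applies), and conclude by induction. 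The base cases are $\lambda=\emptyset$, giving $h=k_n$ and $\csf_q(k_n)=n!_q e_n=0!_q R_{0,n}(\emptyset)\,e_n$, and the rectangular case $\lambda_1=a$, $\ell(\lambda)=b$, giving $h=k_a\cdot k_b$, which is settled by multiplicativity and a direct count showing $R_{b,a}(\lambda)=a!_q$.

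The principal obstacle is the interlock between the induction on $\lambda$ and the modular-law moves on $\D$: Lemma~\ref{lem:Rjrel} relates $\lambda$ to a smaller $\mu$ and a larger $\sigma$, so a naive induction on $|\lambda|$ does not terminate by itself; one must either induct on a more refined invariant (for instance the lexicographic profile of the area sequence, which decreases along both branches when the modular law is applied at a specific corner) or peel off corners one at a time while controlling $\sigma$. Coordinating this with the partition-rearrangement freedom from Proposition~\ref{prop:abelcsf}, and correctly handling the boundary cases in the three-way split that defines $b_j(h)$, is where the bulk of the delicate bookkeeping lies.
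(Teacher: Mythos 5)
Your strategy is essentially the paper's: Lemmas \ref{lem:Rjm} and \ref{lem:Rjrel} show that the right-hand side satisfies the modular law, the modular law reduces both sides to their values at $h=k_m\cdot k_{n-m}$, and the identity is checked there directly. The one point where you depart --- building a bespoke induction on $\lambda$ and flagging its termination as the main unresolved difficulty --- is unnecessary: Theorem \ref{thm:main}, whose proof (Algorithm \ref{alg:alg} together with Theorem \ref{thm:algterm}) already contains the termination argument, guarantees that any function satisfying the modular law is determined by its values at products of complete graphs, and Remark \ref{rem:network} says that for abelian $h$ this reduction is exactly your planar network with endpoints $k_j\cdot k_{n-j}$, so you should simply cite it. Be careful, too, with the proposed appeal to Proposition \ref{prop:abelcsf}: permuting the area sequence changes the associated partition $\lambda$, and the invariance of $R_{j,m}(\lambda)$ under such permutations is a consequence of the theorem being proved rather than an available input --- fortunately the detour is never needed, since the moves of Definition \ref{def:modular} applied to an abelian $h$ correspond directly to corner moves on $\lambda$ and $\lambda^t$, which is precisely what Lemma \ref{lem:Rjrel} and its transposed form cover, with Lemma \ref{lem:Rjm} reconciling the two clauses in the definition of $b_j$. (A small index slip in your base case: for $h=k_a\cdot k_b$ one has $\lambda=(a,\dots,a)$ with $b$ parts and the relevant count is $R_{a,b}(\lambda)=b!_q$, not $R_{b,a}(\lambda)=a!_q$.)
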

\begin{proof}
By Lemmas \ref{lem:Rjm} and \ref{lem:Rjrel}, the right-hand side satisfies the modular law. Hence, by Theorem \ref{thm:main} and Remark \ref{rem:network}, it is enough to prove the equality when $h=k_m\cdot k_{n-m}$. We assume without loss of generality that $m\leq n/2$.  In this case the partition associated to $h$ is $\lambda=(m,m,\ldots,m)$. Moreover, we have that $\csf_q(h)=m!_q(n-m)!_qe_{n-m,m}$, while $b_{j}(h)=0$ if $j\neq m$ and $b_{m}(h)=R_{m,n-m}(\lambda)=(m-n)!_q$. The result follows.
\end{proof}
 By \cite[Theorem 6]{Hag98} we have that $q$-hit numbers are unimodal, this gives a different proof of Proposition \ref{prop:csfunimodal}. Vice-versa, Proposition \ref{prop:csfunimodal} and Theorem \ref{thm:qhit} give a different proof of the unimodality of $q$-hit numbers.

\section{Logarithmic concavity}
In this section we discuss the logarithmic concavity of the coefficients of $\csf_q$.  We need a few definitions first. We say that a polynomial $P(q)=\sum a_jq^j$ is \emph{log-concave with no internal zeros} if it is unimodal and $a_j^2\geq a_{j-1}a_{j+1}$ for every $j$. For simplicity, in what follows we will write \emph{log-concave} instead of \emph{log-concave with no internal zeros}. It is a well know fact that the product of two log-concave polynomials still is a log-concave polynomial (see \cite{Stanley86}). On the other hand, the sum of two log-concave polynomials does not need to be log-concave. \par
  We say that two log-concave polynomials $P_1(q)=\sum a_jq^j$ and $P_2(q)=\sum b_jq^j$ are \emph{synchronized} if 
  \begin{equation}
      \label{eq:partialsync}
      a_kb_k\geq a_{k+1}b_{k-1}\text{ and } a_kb_k\geq a_{k-1}b_{k+1}
  \end{equation}
 for every $ k\geq1$.  By \cite{GMTW} we have that if $P_1$, $P_2$ and $F$ are log-concave polynomials and $P_1$ and $P_2$ are synchronized, then $P_1\cdot F$ and $P_2\cdot F$ are synchronized as well. Moreover, if $P_1,\ldots, P_n$ are log-concave polynomials which are pairwise synchronized then $P_1+P_2+\ldots+P_n$ is a log-concave polynomial, see \cite[Theorem 2.13]{GMTW}.\par
  We note that the polynomials $[m]_q$ are log-concave with no internal zeros for every non-negative integer $m$.
\begin{Lem}
\label{lem:nmsync}
  We have that $[n]_q[m]_q$ and $[n+1]_q[m-1]_q$ are synchronized log-concave polynomials for every non-negative integers $n,m$. 
\end{Lem}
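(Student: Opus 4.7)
The plan is to reduce to a single inequality by palindromy and then exploit the identity
\[
[n+1]_q[m-1]_q - [n]_q[m]_q \;=\; q^n \, [m-n-1]_q,
\]
valid for $m \geq n+1$ and obtained by a direct computation from $[k]_q = (q^k-1)/(q-1)$.

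First I would dispose of the edge cases. The statement is vacuous if one of the factors is zero, so I may assume $n,m\geq 1$. By the symmetry $(n,m)\leftrightarrow(m-1,n+1)$, which simply swaps the two polynomials in the statement, one may further assume $m\geq n+1$. When $m=n+1$ the identity above gives $[n+1]_q[m-1]_q=[n]_q[m]_q$, so synchronization reduces to log-concavity; the essential case is therefore $m\geq n+2$.

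Next I would fix the preliminaries. Each $[k]_q$ has coefficients in $\{0,1\}$ and is trivially log-concave, so both products are log-concave as products of log-concave polynomials. Write $a_k:=[q^k]\bigl([n]_q[m]_q\bigr)$ and $b_k:=[q^k]\bigl([n+1]_q[m-1]_q\bigr)$; the identity then gives $b_k=a_k+d_k$ where $d_k=\mathbf{1}\{n\leq k\leq m-2\}$. Both coefficient sequences are palindromic of common length $n+m-1$, so the two inequalities in \eqref{eq:partialsync} are exchanged under the reflection $k\mapsto n+m-2-k$, and it suffices to verify
\[
a_k b_k - a_{k+1} b_{k-1} \;\geq\; 0 \qquad \text{for } k\geq 1.
\]

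Finally I would prove this inequality via the decomposition
\[
a_k b_k - a_{k+1} b_{k-1} \;=\; \bigl(a_k^{2} - a_{k+1}a_{k-1}\bigr) + \bigl(a_k d_k - a_{k+1} d_{k-1}\bigr).
\]
The first summand is nonnegative by log-concavity of $[n]_q[m]_q$. A case split on $(d_k,d_{k-1})\in\{0,1\}^2$ handles the second summand in three of the four cases (using that $a_k=a_{k+1}=n$ whenever $n+1\leq k\leq m-2$); only the case $(d_k,d_{k-1})=(0,1)$ can produce a negative contribution, and it occurs at the single value $k=m-1$. There the second summand equals $-a_m=-(n-1)$, while the first evaluates to $a_{m-1}^{2}-a_m a_{m-2}=n^2-n(n-1)=n$, yielding a total of $1\geq 0$. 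The main (and only) obstacle in the argument is this boundary point $k=m-1$, where the log-concavity of $[n]_q[m]_q$ is genuinely needed to absorb the deficit from the second summand.
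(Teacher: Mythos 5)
Your proof is correct, and it organizes the verification differently from the paper. The paper's proof (after the same reduction to log-concavity of products) simply assumes $n\geq m$, writes out the two coefficient sequences of $[n]_q[m]_q$ and $[n+1]_q[m-1]_q$ explicitly, and checks the synchronization inequalities \eqref{eq:partialsync} range by range, reducing them to a handful of integer inequalities such as $(m-1)^2\geq m(m-2)$. You instead exploit the identity $[n+1]_q[m-1]_q-[n]_q[m]_q=q^n[m-n-1]_q$ to write $b_k=a_k+d_k$ with $d_k$ a $0/1$ indicator, use palindromy to cut the two families of inequalities down to one, and decompose $a_kb_k-a_{k+1}b_{k-1}$ into the log-concavity defect of $[n]_q[m]_q$ plus a correction $a_kd_k-a_{k+1}d_{k-1}$. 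The computations underneath are the same explicit coefficient formulas, but your decomposition isolates the single critical index $k=m-1$ where the correction is negative and shows the total there equals exactly $1$, i.e., the inequality is tight up to one unit; the paper's version is more immediate but spreads the same content over a case-by-case check of ranges. Both arguments are complete; the only cosmetic quibble with yours is that the opening reduction should read $n\geq 1$ and $m\geq 2$ (so that none of the four factors vanishes), though the $m=1$ case is indeed covered by your remark that the statement is trivial when a factor is the zero polynomial.
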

\begin{proof}
Both polynomials are products of log-concave polynomials, and hence are log-concave.\par
  Without loss of generality we will assume that $n\geq m$, in this case we have
  \begin{align*}
      [n]_q[m]_q=&1+2q+\ldots+(m-1)q^{m-2} +mq^{m-1}+mq^{m}+\ldots mq^{n-1}\\
                 &+(m-1)q^{n}+(m-2)q^{n+1}+\ldots+ q^{n+m-2},\\
      [n+1]_q[m-1]_q=&1+2q+\ldots +(m-1)q^{m-2}+(m-1)q^{m-1}+(m-1)q^m+\ldots (m-1)q^{n-1}\\
                     &+(m-1)q^{n}+(m-2)q^{n+1}+\ldots+ q^{n+m-2}.
  \end{align*}
  From these expressions, it is easy to check that the coefficients of both polynomials do satisfy Equation \eqref{eq:partialsync}. Indeed,  Equation \eqref{eq:partialsync} for $k=m-2, m-1, \ldots, n $ would follow from the inequalities bellow
  \begin{align*}
      (m-1)^2\geq& m(m-2),& (m-1)^2\geq& (m-2)(m-1),\\
      m(m-1)\geq & m(m-1), &  m(m-1)\geq& (m-1)^2.
  \end{align*}
    For $k<m-2$ or $k>n$, Equation \eqref{eq:partialsync} become $k^2\geq (k-1)(k+1)$, which is also clear.      
\end{proof}
  We can then prove the log-concavity of the $e$-coefficients of $\csf_q(G)$ when $G$ is the path graph, that is, the graph associated to the Hessenberg function $(2,3,\ldots, n-1,n,n)$.
  
  \begin{Prop}
  \label{prop:path}
  If $h=(2,3,\ldots, n-1,n,n)$, then the $e$-coefficients of $\csf_q(h)$ are log-concave polynomials.
  \end{Prop}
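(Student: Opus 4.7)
The plan is to derive an explicit formula for the $e$-basis expansion of $\csf_q(h)$ by iterating the modular law along the tail of the Dyck path, and then verify that each coefficient is a sum of pairwise synchronized log-concave polynomials. Write $P_m$ for the path Hessenberg $(2,3,\ldots,m-1,m,m)$, so that $h=P_n$.

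I would apply Proposition \ref{prop:relations} to the sequence of Hessenberg functions
\[
H_\ell=(2,3,\ldots,n-\ell-1,\underbrace{n,n,\ldots,n}_{\ell+2}),\qquad \ell=0,1,\ldots,n-3;
\]
here $H_0=P_n$ and $H_{n-2}=k_n$, and geometrically $H_\ell$ is a path on $n-\ell-1$ vertices with a $K_{\ell+2}$-clique glued at its terminal vertex. The triple supplied by Proposition \ref{prop:ijk2} for $H_\ell$ is $(i,j,b)=(n-\ell-2,\,n-\ell-1,\,\ell+1)$, and a direct check shows that the corresponding $h_2$ is $H_{\ell+1}$ while $h_0$ is the disjoint union $P_{n-\ell-2}\cdot k_{\ell+2}$. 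Proposition \ref{prop:relations} then yields
\[
[\ell+2]_q\,\csf_q(H_\ell)=\csf_q(H_{\ell+1})+q[\ell+1]_q\,(\ell+2)!_q\,\csf_q(P_{n-\ell-2})\,e_{\ell+2}.
\]
Dividing by $(\ell+2)!_q$ and telescoping over $\ell=0,\ldots,n-3$ (using $\csf_q(k_n)=n!_q\,e_n$) produces the clean recurrence
\[
\csf_q(P_n)=[n]_q\,e_n+\sum_{m=2}^{n-1}q[m-1]_q\,\csf_q(P_{n-m})\,e_m.
\]

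Unfolding this recurrence gives, for each partition $\lambda\vdash n$,
\[
[e_\lambda]\csf_q(P_n)=\sum_{(m_1,\ldots,m_r)}q^{r-1}\,[m_r]_q\prod_{i<r}[m_i-1]_q,
\]
summed over orderings $(m_1,\ldots,m_r)$ of the parts of $\lambda$ with $m_i\geq 2$ for $i<r$ and $m_r\geq 1$. Each summand is a product of log-concave polynomials (a power of $q$ and the $q$-integers $[k]_q$), hence log-concave. For two summands with distinct last-part choices $m_r=a$ and $m_r=b$, the shared factor $q^{r-1}\prod_{\lambda_i\neq a,b}[\lambda_i-1]_q$ is log-concave, and the differing two-factor products $[a]_q[b-1]_q$ and $[a-1]_q[b]_q$ are synchronized by Lemma \ref{lem:nmsync} (taking $a-1$ and $b$ in place of $n$ and $m$ in that lemma). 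Multiplication by the common log-concave prefactor preserves synchronization by the result of \cite{GMTW} recalled at the start of this section, so all summands of $[e_\lambda]\csf_q(P_n)$ are pairwise synchronized log-concave polynomials, and \cite[Theorem 2.13]{GMTW} concludes that the coefficient is log-concave.

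The main obstacle I anticipate is the recurrence step: one must verify carefully that the triple $(i,j,b)$ for $H_\ell$ really produces $H_{\ell+1}$ as $h_2$ and $P_{n-\ell-2}\cdot k_{\ell+2}$ as $h_0$, and track the $q$-factorial scalings correctly through the telescoping. Once the recurrence is in hand, the synchronization argument is exactly the ``move'' $(a,b-1)\leftrightarrow(a-1,b)$ between the two-factor products, which is precisely the content of Lemma \ref{lem:nmsync}.
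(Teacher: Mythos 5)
Your proof is correct, and it reaches the same synchronization argument as the paper but by a genuinely different route to the explicit formula. The paper simply quotes the closed form $q^{\ell(\lambda)-1}\sum_i[\lambda_i]_q\prod_{j\neq i}[\lambda_j-1]_q$ for the $e_\lambda$-coefficient from \cite{Haiman} and then applies Lemma \ref{lem:nmsync} together with \cite[Theorem 2.13]{GMTW}, exactly as in your final paragraph. You instead derive the formula from the modular law: your triple $(i,j,b)=(n-\ell-2,\,n-\ell-1,\,\ell+1)$ is precisely the one produced by Proposition \ref{prop:ijk2} for $H_\ell$, the identifications $h_2=H_{\ell+1}$ and $h_0=P_{n-\ell-2}\cdot k_{\ell+2}$ check out, and since $[j-i]_q=[1]_q=1$ and $[\ell+2]_q-[1]_q=q[\ell+1]_q$, the telescoping after dividing by $(\ell+2)!_q$ does give the stated recurrence, whose unfolding is a valid closed form. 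This buys self-containedness (no appeal to \cite{Haiman}) and is in the spirit of the rest of the paper. One small remark: your composition-sum and the paper's position-sum assign different positive integer multiplicities to the terms $[a]_q\prod_{b\neq a}[b-1]_q$ when $\lambda$ has repeated parts (for $\lambda=(2,2)$ your formula gives $q[2]_q$ while the quoted formula gives $2q[2]_q$; a direct check of $\csf_q(P_4)$ confirms your normalization), but since both expressions are non-negative combinations of the same family of pairwise synchronized log-concave polynomials, the log-concavity conclusion is unaffected either way.
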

   \begin{proof}
   We have that the coefficient of $e_{\lambda}$ in $\csf_q(h)$ is (see \cite{Haiman})
   \begin{equation}
       \label{eq:pathformula}
       q^{\ell(\lambda)-1}\sum_{i=1}^{\ell(\lambda)} [\lambda_i]_q\prod_{j=1, j\neq i}^{\ell(\lambda)} [\lambda_j-1]_q.
   \end{equation}
   
  By Lemma \ref{lem:nmsync}, we have that $[\lambda_{i_1}]_q[\lambda_{i_2}-1]_q$ and $[\lambda_{i_1}-1]_q[\lambda_{i_2}]_q$ are synchronized for every $i_1,i_2\in \{1,\ldots, \ell(\lambda)\}$. Moreover, we have that $\prod_{j\neq i_1,i_2}[\lambda_j-1]$ is log-concave (because each of its factors is), this implies that $[\lambda_{i_1}]_q\prod_{j\neq i_1} [\lambda_j-1]_q$ and $[\lambda_{i_2}]_q\prod_{j\neq i_2} [\lambda_j-1]_q$ are  synchronized. By \cite[Theorem 2.13]{GMTW} we have that the sum in Equation \eqref{eq:pathformula} is log-concave.
  \end{proof}
  
  There are a few other special cases where closed formulas for the $e$-coefficients are available. In \cite[Theorem 4.2]{Cho} a formula for $\csf_q(h)$ is given when $h=(r,\ldots, r,n,n,\ldots,n)$. In this case, the $e$-coefficients of $\csf_q(h)$ are products of polynomials of the form $[m]_q$, in particular these coefficients are log-concave. \par 
  In \cite[Proposition 4.4]{HNY} a closed formula for $\csf_q(h)$ is given when $h$ corresponds to a lollipop graph, that is, $h=(2,3,4,\ldots, n+1,n+m,n+m,\ldots, n+m)$. If $m> n$ and $\lambda$ is a partition with $n+m>\lambda_1\geq m$, then the coefficient of  $\csf_q(h)$ in $e_{\lambda}$ is
    \[
       q^{\ell(\lambda)-1}(m-1)!_q[\lambda_1-1]_q\sum_{i=2}^{\ell(\lambda)} [\lambda_i]_q\prod_{j=2, j\neq i}^{\ell(\lambda)} [\lambda_j-1]_q.
   \]
 This is essentially the same sum appearing in Equation \eqref{eq:pathformula}, which is log-concave. If $\lambda_1=n+m$, then the coefficient of $e_{\lambda}$ is $[n+m]_q(m-1)!_q$, which is log-concave. All other coefficients are $0$.\par
   A more careful analysis of the proof of Proposition \ref{prop:csfunimodal} could lead to a proof of logarithmic concavity for the abelian case. 
  
  We have also tested every indifference graph up to 12 vertices, and all have log-concave $e$-coefficients.

\begin{Conjecture}
The coefficients of $\csf_q(h)$ in the elementary basis are log-concave polynomials for every $h$.
\end{Conjecture}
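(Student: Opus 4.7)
The plan is to strengthen Proposition \ref{prop:csfunimodal} from $e$-unimodality to $e$-log-concavity in the abelian case, and then to extend beyond it via Algorithm \ref{alg:alg}. For an abelian Hessenberg function $h$ one first reduces by Lemma \ref{lem:abelian} to the setup of Remark \ref{rem:networkpos}, expressing the coefficient of $e_{n-l,l}$ in $\csf_q(h)$ as $l!_q(n-h(i))!_q\cdot P_l$, where $P_l$ is a sum of path-products coming from the planar network of Remark \ref{rem:network}. The first step is to check that every individual path-product is log-concave: each edge weight is either $[a]_q$ or $[c]_q-[a]_q$ with $c>a$, both of which are log-concave, and a product of log-concave polynomials is log-concave. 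The crucial second step is to show that any two path-products are pairwise synchronized; by \cite[Theorem 2.13]{GMTW} this would give log-concavity of $P_l$. Here I would argue that two paths are connected by a sequence of four-cell swaps of the type depicted in Figure \ref{fig:subpath}, and use Lemma \ref{lem:nmsync} together with a suitable generalization of it to prove that each swap preserves synchronization between the two resulting products.

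For the general case I would run Algorithm \ref{alg:alg} and induct with an enhanced invariant: every coefficient $[\csf_q(h)]_{e_\lambda}$ is log-concave, and for every modular-law triple $(h_0,h_1,h_2)$ that actually appears in a reduction step, the weighted polynomials $[j-i]_q\cdot [\csf_q(h_2)]_{e_\lambda}$ and $([b+1]_q-[j-i]_q)\cdot [\csf_q(h_0)]_{e_\lambda}$ are synchronized. Combined with \cite[Theorem 2.13]{GMTW} and Proposition \ref{prop:relations}, this would yield log-concavity of $[b+1]_q\cdot [\csf_q(h_1)]_{e_\lambda}$. One then has to descend from this to log-concavity of $[\csf_q(h_1)]_{e_\lambda}$ itself, which is not automatic since log-concavity need not be preserved by division by a log-concave polynomial; a practical way around this is to unfold the recursion all the way to the base case (products of complete graphs, whose coefficients are products of $[k]_q$'s and hence log-concave) and carry an \emph{explicit} decomposition of $[\csf_q(h)]_{e_\lambda}$ as a sum of pairwise synchronized log-concave polynomials at every stage, rather than only the log-concavity of the sum.

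The principal obstacle is the synchronization invariant itself: synchronization is a pairwise condition that is not automatically inherited through three-term combinations, so both its precise formulation and its preservation at each reduction step must be pinned down with care. A related subtlety is that even the abelian case subsumes, via Theorem \ref{thm:qhit}, an unknown log-concavity statement for the Garsia--Remmel $q$-hit numbers $R_{j,n-j-1}(\lambda)$, strengthening Haglund's unimodality result \cite{Hag98}; this would in fact drop out of the path-by-path analysis above. The closed-form evidence of Proposition \ref{prop:path} and \cite[Proposition 4.4]{HNY}, together with the brute-force verification through twelve vertices, should be used to pin down the precise synchronization structure before attempting the induction, with an eye perhaps toward a Hodge-theoretic reinterpretation in the spirit of \cite{AHK}.
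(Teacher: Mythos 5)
This statement is a \emph{conjecture} in the paper: the authors do not prove it, and offer only partial evidence (the path graph in Proposition \ref{prop:path}, the lollipop and the $(r,\ldots,r,n,\ldots,n)$ cases via known closed formulas, the remark that the abelian case might follow from a refinement of Proposition \ref{prop:csfunimodal}, and machine verification through $12$ vertices). So there is no paper proof to compare against, and your text is likewise a research programme rather than a proof; the question is whether the programme is viable. For the abelian case your outline is plausible but the load-bearing step is exactly the part you defer: the ``suitable generalization'' of Lemma \ref{lem:nmsync}. The swapped edge-products in Figure \ref{fig:subpath} are of the form $[a]_q\cdot q^{b-1}[c]_q$ versus $q^{a}[c']_q\cdot[b]_q$ (using $[x]_q-[y]_q=q^y[x-y]_q$), i.e.\ products of $q$-integers with \emph{different} monomial shifts, and Lemma \ref{lem:nmsync} does not cover these. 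Moreover synchronization is not transitive, so even if each single swap preserves synchronization between the two products it exchanges, you still need an argument that \emph{all} pairs of path-products (not just swap-adjacent ones) are synchronized before \cite[Theorem 2.13]{GMTW} applies to $P_l$. Neither point is addressed.

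The general case has a more structural obstruction. Your enhanced invariant presupposes that at every reduction step the two summands $[j-i]_q\,[\csf_q(h_2)]_{e_\lambda}$ and $([b+1]_q-[j-i]_q)\,[\csf_q(h_0)]_{e_\lambda}$ are log-concave polynomials with nonnegative coefficients; but outside the bipartite-complement situation of Remark \ref{rem:networkpos} the paper makes no claim that $[b+1]_q-[j-i]_q$ has nonnegative coefficients, and in general the algorithm produces signed combinations. A synchronization argument cannot be run through signed sums, and the proposed fix of ``unfolding to the base case and carrying an explicit decomposition into pairwise synchronized summands'' does not resolve either this sign problem or the division-by-$[b+1]_q$ problem you yourself flag. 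Note also that the conjecture subsumes $e$-positivity of $\csf_q$ for all indifference graphs, i.e.\ the Shareshian--Wachs refinement of the Stanley--Stembridge conjecture, which is a major open problem not touched by your outline; any complete proof must confront it. The counterexamples the paper records in the Schur and power-sum bases show the statement is genuinely $e$-basis-specific, which is further reason to expect that positivity input is unavoidable. In short: a reasonable plan for the abelian case with two concrete unproven lemmas, and no workable route yet for the general case.
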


In the Schur basis we have that the coefficient of $s_{6,1,1,1}$ in the expansion of $\csf_q(h)$, for $h=(3,4,4,4,5,6,7,8,9)$, is given by
\[
q^5 + 3q^4 + 10q^3 + 10q^2 + 3q + 1,
\]
which is not log-concave. For an example involving an irreducible Hessenberg function, we have that the coefficient of $s_{4,2,2,1,1,1}$ in the expansion of $\csf_q(h)$, for $h=(2,3,4,5,6,9,10,11,11,11,11)$,  is given by 
\[
q^{14} + 5q^{13} + 28q^{12} + 100q^{11} + 227q^{10} + 349q^9 + 349q^8 + 227q^7 + 100q^6 + 28q^5 + 5q^4 + q^3,
\]
which is not log-concave. Curiously, up to taking its transposed, this is the only irreducible Hessenberg function $h\col[n]\to [n]$, with $n\leq 11$, that has a Schur coefficient that is not log-concave.  \par
 In the power sum basis, we have that the coefficient of $p_{1,1,1,1,1}$ in the expansion of $\csf_q(h)$, for $h=(3,4,5,5,5)$, is given by
 \[
\frac{1}{120}(q^7+4q^6+17q^5+38q^4+38q^3+17q^2+4q+1),
 \]
which is not log-concave.

\bibliographystyle{amsalpha}

\bibliography{bibhess}

\end{document}